\sloppy \theoremstyle{plain}
\newtheorem*{theorem*}{Theorem}
\newtheorem*{remark*}{Remark}
\newtheorem*{example*}{Example}
\newtheorem{lemma}{Lemma}[section]
\newtheorem{theorem}[lemma]{Theorem}
\newtheorem{definition}[lemma]{Definition}
\newtheorem{conjecture}{Conjecture}
\newtheorem*{conjecture*}{Conjecture}
\newtheorem{thm}[lemma]{Theorem}
\newtheorem{prop}[lemma]{Proposition}
\newtheorem{lem}[lemma]{Lemma}
\newtheorem{defn}[lemma]{Definition}
\newtheorem{cor}[lemma]{Corollary}
\newtheorem{rem}[lemma]{Remark}
\newtheorem{introtheorem}{Theorem}
\newcommand{\s}{\sigma}
\def\keywordsa{\xdef\@thefnmark{}\@footnotetext}
\theoremstyle{remark}
\providecommand{\claimname}{Claim}
\title[Uneven Shalika models for $\GL_{n+m}$]{Uneven Shalika models for general linear groups}
\author{Itay Naor}
\date{\today}
\begin{document}
\global\long\def\C#1{\mathbb{C}^{#1}}%
\global\long\def\P#1{\mathbb{P}^{#1}}%
\global\long\def\Pr{\mathbb{P}}%
\global\long\def\E#1{\mathbb{E}\left[#1\right]}%
\global\long\def\R#1{\mathbb{R}^{#1}}%

\global\long\def\lbar#1{\bar{\ell}_{#1}}%
\global\long\def\xbar{\bar{p}}%
\global\long\def\norm#1{\left\lVert #1\right\rVert }%

\global\long\def\Xhat{\hat{X}}%
\global\long\def\homcor#1#2#3#4{[#1:#2:#3:#4]}%
\global\long\def\isom{\mathbb{\cong}}%
\global\long\def\Perp{^{\perp}}%
\global\long\def\F{\mathbb{F}}%
\global\long\def\l{\ell}%
\global\long\def\a{\alpha}%
\global\long\def\d{\delta}%
\global\long\def\b{\beta}%
\global\long\def\s{\sigma}%
\global\long\def\lb{\lambda}%
\global\long\def\pinv{^{+}}%

\global\long\def\del{\partial}%
\global\long\def\ra{\rightarrow}%
\global\long\def\da{\downarrow}%
\global\long\def\mt{\mapsto}%
\global\long\def\r{\rho}%
\global\long\def\e{\varepsilon}%
\global\long\def\vphi{\varphi}%
\global\long\def\qb{\overline{\mathbb{Q}}}%
\global\long\def\lbd{\lambda}%
\global\long\def\x{\chi}%
\global\long\def\Om{\Omega}%
\global\long\def\lhat#1{\widehat{#1}}%
\global\long\def\xh{\widetilde{\x}}%
\global\long\def\cA{\mathcal{A}}%
\global\long\def\fa{\forall}%
\global\long\def\cH{\mathcal{H}}%
\global\long\def\ca#1{\mathcal{#1}}%
\global\long\def\la{\leftarrow}%
\newcommand{\reals}{\mathbb{R}}

\global\long\def\ast{\textasteriskcentered}
\newcommand{\LL}{\mathbb{L}}
\newcommand{\sign}{\mathrm{sign}}
\newcommand{\half}{\frac{1}{2}}
\newcommand{\argmin}[1]{\underset{#1}{\mathrm{argmin}}}
\newcommand{\argmax}[1]{\underset{#1}{\mathrm{argmax}}}
\newcommand{\summ}{\displaystyle \sum}
\newcommand{\intt}{\displaystyle\int}
\newcommand{\var}{\text{Var}}
\newcommand{\nchoosek}[2]{\left(\begin{array}{*{20}c}#1\\#2\end{array}\right)}

\newcommand{\sfO}{\mathsf{O}}
\newcommand{\ba}{\mathbf{a}}
\newcommand{\be}{\mathbf{e}}
\newcommand{\bx}{\mathbf{x}}
\newcommand{\bw}{\mathbf{w}}
\newcommand{\bg}{\mathbf{g}}
\newcommand{\bb}{\mathbf{b}}
\newcommand{\bu}{\mathbf{u}}
\newcommand{\bv}{\mathbf{v}}
\newcommand{\bz}{\mathbf{z}}
\newcommand{\br}{\mathbf{r}}
\newcommand{\bc}{\mathbf{c}}
\newcommand{\bd}{\mathbf{d}}
\newcommand{\bh}{\mathbf{h}}
\newcommand{\by}{\mathbf{y}}
\newcommand{\bn}{\mathbf{n}}
\newcommand{\bs}{\mathbf{s}}
\newcommand{\bq}{\mathbf{q}}
\newcommand{\bmu}{\boldsymbol{\mu}}
\newcommand{\balpha}{\boldsymbol{\alpha}}
\newcommand{\bbeta}{\boldsymbol{\beta}}
\newcommand{\btau}{\boldsymbol{\tau}}
\newcommand{\bxi}{\boldsymbol{\xi}}
\newcommand{\blambda}{\boldsymbol{\lambda}}
\newcommand{\bepsilon}{\boldsymbol{\epsilon}}
\newcommand{\bsigma}{\boldsymbol{\sigma}}
\newcommand{\btheta}{\boldsymbol{\theta}}
\newcommand{\bomega}{\boldsymbol{\omega}}
\newcommand{\Lcal}{\mathcal{L}}
\newcommand{\Ocal}{\mathcal{O}}
\newcommand{\Acal}{\mathcal{A}}
\newcommand{\Gcal}{\mathcal{G}}
\newcommand{\Ccal}{\mathcal{C}}
\newcommand{\Xcal}{\mathcal{X}}
\newcommand{\Jcal}{\mathcal{J}}
\newcommand{\Dcal}{\mathcal{D}}
\newcommand{\Fcal}{\mathcal{F}}
\newcommand{\Hcal}{\mathcal{H}}
\newcommand{\Rcal}{\mathcal{R}}
\newcommand{\Ncal}{\mathcal{N}}
\newcommand{\Scal}{\mathcal{S}}
\newcommand{\Pcal}{\mathcal{P}}
\newcommand{\Qcal}{\mathcal{Q}}
\newcommand{\Wcal}{\mathcal{W}}
\newcommand{\Ld}{\tilde{L}}
\newcommand{\uloss}{\ell^\star}
\newcommand{\loss}{\mathcal{L}}
\newcommand{\losst}{\ell_t}

\newcommand{\inner}[1]{\langle#1\rangle}
\renewcommand{\comment}[1]{\textcolor{red}{\textbf{#1}}}
\newcommand{\vol}{\texttt{Vol}}

\newcommand{\secref}[1]{Sec.~\ref{#1}}
\newcommand{\subsecref}[1]{Subsection~\ref{#1}}
\newcommand{\figref}[1]{Fig.~\ref{#1}}
\renewcommand{\eqref}[1]{Eq.~(\ref{#1})}
\newcommand{\lemref}[1]{Lemma~\ref{#1}}
\newcommand{\corollaryref}[1]{Corollary~\ref{#1}}
\newcommand{\thmref}[1]{Thm.~\ref{#1}}
\newcommand{\propref}[1]{Proposition~\ref{#1}}
\newcommand{\appref}[1]{Appendix~\ref{#1}}

\newcommand{\note}[1]{\textcolor{red}{{#1}}}

\global\long\def\linf#1{\underset{#1\rightarrow\infty}{\lim}}%
\global\long\def\li#1#2{\underset{#1\rightarrow#2}{\lim}}%
\global\long\def\spnr#1#2{\mathrm{span}(#1,...,#2)}%
\global\long\def\spn{\mathrm{span}}%
\global\long\def\im{\mathrm{Im}}%
\global\long\def\argmin#1{\underset{#1}{\text{argmin}}}%

\global\long\def\mins#1#2{\underset{#1}{\min}\left(#2\right)}%
\global\long\def\maxs#1#2{\underset{#1}{\max}\left(#2\right)}%
\global\long\def\summ#1{\underset{#1}{\sum}}%
\global\long\def\intg#1{\underset{#1}{\int}}%
\global\long\def\sumud#1#2{\stackrel[#1]{#2}{\sum}}%
\global\long\def\piud#1#2{\stackrel[#1]{#2}{\Pi}}%
\global\long\def\intud#1#2{\stackrel[#1]{#2}{\int}}%
\global\long\def\cups#1#2{\stackrel[#1]{#2}{\cup}}%
\global\long\def\dsum#1#2{\stackrel[#1]{#2}{\oplus}}%

\global\long\def\rom#1{\mathrm{#1}}%
\global\long\def\un#1#2{\underset{#2}{#1}}%

\global\long\def\crt#1#2{#1\times#2}%
\global\long\def\bb#1{\mathbb{#1}}%
\global\long\def\rahat#1{\overrightarrow{#1}}%
\global\long\def\cPr#1#2{\mathbb{P}(#1|#2)}%
\global\long\def\pder#1#2{\frac{\partial#1}{\partial#2}}%
\newcommand{\Ind}{\operatorname{Ind}}
\newcommand{\Irr}{\operatorname{Irr}}
\newcommand{\ind}{\operatorname{ind}}
\newcommand{\Hom}{\operatorname{Hom}}
\newcommand{\tr}{\operatorname{tr}}
\newcommand{\diag}{\operatorname{diag}}
\newcommand{\GL}{\operatorname{GL}}
\newcommand{\Mat}{\operatorname{Mat}}
\global\long\def\cal#1{\mathcal{#1}}
\begin{abstract}
    We define a generalization of Shalika models for $\GL_{n+m}(F)$ and prove that they are multiplicity-free, where $F$ is either a non-Archimedean local field or a finite field and $n,m$ are any natural numbers. In particular, we give a new proof for the case of $n=m$. We also show that the Bernstein-Zelevinsky product of an irreducible representation of $\GL_n(F)$ and the trivial representation of $\GL_m(F)$ is multiplicity-free. We relate the two results by a conjecture about twisted parabolic induction of Gelfand pairs.
\end{abstract}
\maketitle
\keywordsa{
2020 \emph{Mathematics Subject Classification.} Primary 20G05; Secondary 46F10, 22E50, 20C33, 22E50.\\
\emph{Key words and phrases.} Multiplicity one, Gelfand pair, finite group, l-group, linear group, distribution.}
\tableofcontents 
\section{Introduction}

\subsection{Motivation}

In representation theory we are often interested to know when induced representations are multiplicity-free. In other words, we want to know when a representation induced from
a representation of a subgroup contains at most one copy of any irreducible
representation.

Therefore we define:
\begin{definition}

Let $H$ be a subgroup
of a group $G$ and $\psi$ a character of $H$. We say that the
triple $(G,H,\psi)$ is a twisted Gelfand pair if $\Ind_{H}^{G}\psi$
is multiplicity-free. In the case of a trivial character we
say that $(G,H)$ is a Gelfand pair. 

\end{definition}
The interest in multiplicity-free representations lies in the fact
that every irreducible subrepresentation has a canonical embedding
in it. For this reason multiplicity free representations are also called
models. Gelfand pairs have many applications, among them in the study of representations of symmetric groups (for example in \cite{sym}) and number theory (e.g. \cite{NT}).

    Throughout this paper, $F$ is either a finite field or a non-Archimedean local field. Let $G_{n+m}=\GL_{n+m}(F)$
be the general linear group over $F$ for $n,m\in \bb{N}$.
A classical Shalika model is the representation $\Ind_{H_{n,n}}^{G_{2n}}\psi$
where:
\[
H_{n,n}=\left\{ \left[\begin{array}{cc}
g & u\\
 & g
\end{array}\right]\mid g\in \GL_{n},u\in \Mat_{n}\right\} 
\]
 and $\psi$ is a generic character of: 
\[
U_{n,n}=\left\{ \left[\begin{array}{cc}
I_{n} & u\\
 & I_{n}
\end{array}\right]\mid u\in \Mat_{n}\right\} 
\]
 extended trivially to $H_{n,n}$. Jacquet and Rallis have proved
in \cite{JacRal} that classical Shalika models are multiplicity-free for p-adic fields. That is, in this case
$(G_{2n},H_{n,n},\psi)$ is a twisted Gelfand pair.

We define a subgroup $H_{n,m}< G_{n+m}$ generalizing the Shalika subgroup $H_{n,n}< G_{2n}$ and investigate a generalization of Shalika models. We show that the induction of a generic character of $H_{n,m}$ to $G_{n+m}$ is multiplicity-free. We call this induced representation an \textit{uneven Shalika model}.

The main motivation for the second Gelfand pair studied in this work
is a conjecture about twisted parabolic induction of Gelfand pairs. To describe the conjecture we consider a reductive group $G$ and assume that $P=LU$ is a Levi decomposition of a parabolic subgroup of $G$, where $L$ and $U$ are the corresponding Levi and unipotent subgroups respectively. Inspired by parabolic induction of spherical pairs, it has been conjectured that:
\begin{conjecture}\label{con:dim}
(Aizenbud-Gourevitch-Sayag) Let $H_0$ be
a subgroup of $L$ and $\psi$ a character of $H_0U$ whose restriction to $U$ is generic. If $(L,H_0,\psi|_{H_0})$ is a twisted Gelfand pair then $(G,H_0U,\psi)$ is a twisted Gelfand pair. 
\end{conjecture}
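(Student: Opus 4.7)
The natural route is via the Gelfand-Kazhdan method, reducing a multiplicity question on $G$ to one on $L$. By Frobenius reciprocity it suffices to show that $\dim \Hom_{H_0 U}(\pi,\psi)\le 1$ for every irreducible smooth $\pi$ of $G$. Since $U$ is normal in $H_0 U$, the Hom space factors through the twisted Jacquet module:
\[
\Hom_{H_0 U}(\pi,\psi)\;\cong\;\Hom_{H_0}\bigl(\pi_{U,\psi|_U},\ \psi|_{H_0}\bigr).
\]
Here $H_0$ acts on $\pi_{U,\psi|_U}$ because the hypothesis that $\psi$ is a character of $H_0 U$ forces $H_0$ to lie in the stabilizer $L_\psi\subseteq L$ of $\psi|_U$ under conjugation. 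One would then like to invoke the twisted Gelfand pair $(L,H_0,\psi|_{H_0})$ directly. The first obstacle is that $\pi_{U,\psi|_U}$ is a representation only of $L_\psi$, not of $L$, so the hypothesis does not apply literally.

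To circumvent this I would pass to distributions and use the Gelfand-Kazhdan criterion globally on $G$. Choose an anti-involution $\Delta_L$ of $L$ witnessing $(L,H_0,\psi|_{H_0})$, i.e.\ stabilizing $H_0$, satisfying $\psi|_{H_0}\circ\Delta_L=(\psi|_{H_0})^{-1}$, and such that every bi-$(H_0,\psi|_{H_0})$-equivariant distribution on $L$ is $\Delta_L$-invariant. Extend $\Delta_L$ to an anti-involution $\Delta_G$ of $G$ preserving the Levi decomposition $P=LU$ and sending $\psi$ to $\psi^{-1}$; one natural candidate, when $G=\GL_{n+m}$, is $g\mapsto w_0\, {}^{t}\!g\, w_0^{-1}$ for an appropriate Weyl element $w_0$ normalizing $P$. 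It then suffices to show that every bi-$(H_0 U,\psi)$-equivariant distribution on $G$ is $\Delta_G$-invariant.

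For this I would invoke Bernstein's localization principle (and its finite-field analogue) and stratify $G$ by $(H_0 U)\times(H_0 U)$-orbits, or more crudely by the Bruhat cells relative to $P$. On the open stratum — roughly, the cell $P$ itself — taking $(U,\psi|_U)$-coinvariants on the left and right reduces the analysis to bi-$(H_0,\psi|_{H_0})$-equivariant distributions on $L$, which are $\Delta_L$-invariant by hypothesis and hence $\Delta_G$-invariant after the extension.

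The substantive difficulty is showing the vanishing of such distributions on every non-open stratum. This is where the genericity of $\psi|_U$ must be used decisively: on a closed stratum the two copies of $U$ act through conjugates of $\psi|_U$ whose compatibility forces extra unipotent invariance, and genericity should make this impossible, forcing the distribution to vanish. This mirrors the strategy of Jacquet--Rallis in \cite{JacRal} for the classical case $n=m$, but in the general parabolic setting it seems to require an orbit-by-orbit analysis depending on the combinatorics of $P\subset G$, which is presumably why the statement remains a conjecture rather than a theorem here; I would not expect a uniform argument without additional structural hypotheses on $H_0$ and $\psi$.
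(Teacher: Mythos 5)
The statement you were asked about is a \emph{conjecture} in the paper, not a theorem: the paper attributes it to Aizenbud--Gourevitch--Sayag, uses it purely as motivation for choosing which pair $(G_{n+m},H_{n,m},\psi)$ to study, and offers no proof of it. In fact the paper's own logic runs in the \emph{opposite} direction: it proves the Gelfand property for the pair in the big group $G=\GL_{n+m}$ directly (Theorem~\ref{thm:main}) and then, in Subsection~\ref{sec:ded}, \emph{deduces} from it the Gelfand property of the corresponding Levi pair (Theorem~\ref{thm:summer}) --- the converse implication to the conjecture. So there is no proof in the paper to compare your sketch against, and you were right not to claim one.

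As a sketch of how one might attack the conjecture, your outline is the natural one and you locate the obstruction correctly and honestly. Two points are worth sharpening. First, the step ``choose an anti-involution $\Delta_L$ witnessing $(L,H_0,\psi|_{H_0})$'' already adds a hypothesis not present in the conjecture: the Gelfand--Kazhdan criterion is sufficient but is only known to be necessary in a qualified sense (cf.\ \cite{aiz}), and even if a witnessing $\Delta_L$ exists, nothing forces it to extend to an anti-involution $\Delta_G$ of $G$ that stabilizes $P=LU$ and sends $\psi$ to $\psi^{-1}$. In the general reductive setting of the conjecture this is a genuine extra structural assumption. Second, and more substantively, the vanishing of equivariant distributions on the non-open $\widetilde{H}$-orbits is precisely where all the difficulty lies, and genericity of $\psi|_U$ does not give this uniformly; this is exactly why the statement remains a conjecture. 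Your instinct that an orbit-by-orbit analysis tied to the combinatorics of $P\subset G$ is unavoidable is borne out by the paper itself, which carries out such an analysis for the single family $(G_{n+m},H_{n,m},\psi)$ across the remainder of Section~\ref{sec:main} rather than attempting a general argument.
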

Embedding $\GL_{n}\times \GL_{m}$
as a Levi subgroup in $\GL_{n+m}(F)$, we will see that the conjecture ties the triple $(G_{n+m},H_{n,m},\psi)$ to a pair in $\GL_{n}\times \GL_{m}$ which we describe precisely in the next subsection. We prove in this paper that this pair is a Gelfand pair as well.

\subsection{Main results}

In this work we prove the Gelfand property for two pairs inspired
by Shalika models. Let $G:=G_{n+m}$ and recall that $F$ is either a finite field or a non-Archimedean local field.

\subsubsection{$\textsc{Uneven Shalika models}$}
We define a generalization of Shalika models for
$m\neq n$. Assume without
loss of generality $n\leq m$. Let $P_{n,n,m-n}$ be the standard parabolic subgroup
corresponding to the partition $\{n,n,m-n\}$, and let $P_{n,n,m-n}=LU$ be its Levi
decomposition. Denote
\[
H_{0}:=\{\diag(g,g,h) \mid g\in \GL_{n}(F),h\in \GL_{m-n}(F)\}<L
\]
and define the Shalika subgroup to be $H=H_{n,m}:=H_{0}U<P_{n,n,m-n}$. 

Choose a non-trivial character $\psi_{0}$ of the additive group of
$F$. Let $\psi$ be any character of $H$ that satisfies
\[
\psi(\left[\begin{array}{ccc}
I_{n} & u & a\\
 & I_{n} & b\\
 &  & I_{m-n}
\end{array}\right]):=\psi_{0}(tr(u))
\]
for all $a,b\in \Mat_{n,m-n}$ and $u\in \Mat_{n}$. We call any such
character a twisted Shalika character. In the special
case of $\psi|_{H_{0}}\equiv1$ we call $\psi$ a regular (non-twisted)
Shalika character. 
\begin{introtheorem}
\label{thm:main}The pair $(G_{n+m},H_{n,m},\psi)$ is a twisted
Gelfand pair. Explicitly,
\[
\dim\Hom_{G}(\pi,\Ind_{H}^{G}\psi)\leq1
\]

for any $\pi\in \Irr(G_{n+m})$.
\end{introtheorem}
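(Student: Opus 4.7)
The plan is to apply the Gelfand--Kazhdan criterion: construct an anti-involution $\s$ of $G=G_{n+m}$ such that $\s(H)=H$ and $\psi\circ\s=\psi^{-1}$, and then show that every bi-$(H,\psi)$-equivariant distribution on $G$ is $\s$-invariant. By Gelfand's trick this forces $\dim\Hom_{G}(\pi,\Ind_{H}^{G}\psi)\cdot\dim\Hom_{G}(\pi^{\vee},\Ind_{H}^{G}\psi)\le 1$ for every irreducible $\pi$, yielding the stated multiplicity-one statement. For the classical case $n=m$ this is exactly the strategy of Jacquet--Rallis, using the involution $g\mapsto w\cdot{}^{t}g\cdot w$ where $w$ is the permutation matrix swapping the two $\GL_n$-blocks. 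For $m>n$ the naive analogue fails to preserve the upper-block-triangular unipotent radical $U$, so the first substantive task is to design a suitable $\s$, most plausibly of the form $g\mapsto J\cdot{}^{t}g\cdot J^{-1}$ with $J$ a symmetric matrix mixing the swap of the two $\GL_n$-blocks with a reflection on the $\GL_{m-n}$-block, and then to verify the compatibilities with $H_0$, with $U$, and with the trace character $\psi_{0}(\tr(u))$.

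The next step, and the technical core of the argument, is the distribution analysis. In the non-Archimedean case I would combine Bernstein's theorem on $l$-group distributions (which localises equivariant distributions along a stratification) with a Luna-slice style descent to the stabiliser. Concretely I would stratify $G$ by the action of $H\times H$ (or an admissible coarsening), use the non-triviality of $\psi$ on suitable unipotent subgroups of the stabilisers to kill equivariant distributions on the non-generic strata, and verify $\s$-invariance on the strata that survive by checking that $\s$ preserves the stabiliser and intertwines $\psi\boxtimes\psi^{-1}$. In the finite-field case the same scheme is elementary: one enumerates double cosets and, on each, either a character-sum vanishing argument or a manifest symmetry yields $\s$-invariance.

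I expect to induct on $m-n$, with the base $m=n$ recovering Jacquet--Rallis. The inductive step would rest on a Jacquet-type filtration that descends the problem along a carefully chosen normal unipotent subgroup to a smaller uneven Shalika model of type $(n,m-1)$ or $(n-1,m-1)$, together with a separate treatment of the complementary piece. The hard part will be the orbit classification: the extra $\GL_{m-n}$-block creates genuinely new orbit-types indexed by ranks (and finer Bruhat-like invariants) of the off-diagonal $n\times(m-n)$ blocks, and ensuring on each such orbit that the stabiliser and the restriction of $\psi\boxtimes\psi^{-1}$ are compatible with $\s$ is the main obstacle. A uniform parametrisation of these orbits, together with a case-free argument that $\s$ acts trivially on the resulting distribution spaces, is what the proof will ultimately need to supply.
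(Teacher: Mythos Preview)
Your high-level strategy (Gelfand--Kazhdan plus orbit analysis) is the right one and matches the paper, but there is a genuine obstacle at your very first step. You insist on finding an anti-involution $\sigma$ with $\sigma(H)=H$ and $\psi\circ\sigma=\psi^{-1}$. For $m\neq n$ this is almost certainly impossible: any transpose-type involution sends the block-upper-triangular unipotent radical of $P_{n,n,m-n}$ to a block-lower-triangular group, and no conjugation by a symmetric $J$ can restore the correct block sizes when the three blocks have unequal dimensions. The paper does \emph{not} require $\tau(H)=H$; it uses $\tau(g)=\diag(w_{2n},I_{m-n})\,g^{t}\,\diag(w_{2n},I_{m-n})$, for which $H':=\tau(H)\neq H$, and then works with $H'\backslash G/H$. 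To recover the one-sided bound $\dim\Hom_{G}(\pi,\Ind_{H}^{G}\psi)\le 1$ (rather than the product bound with $\pi^{\vee}$), the paper invokes an extra ingredient: an element $b\in G$ with $b\,\tau(h'^{-1})\,b^{-1}\in H$ and $\psi(b\,\tau(h'^{-1})\,b^{-1})=\psi(\tau(h'))$ for all $h'\in H'$, together with the Gelfand--Kazhdan fact that $\widetilde\pi\cong\pi\circ(g\mapsto g^{t})$ for $\GL$. Without this reformulation your programme cannot get off the ground.

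Your proposed induction on $m-n$ with base case $m=n$ is also not what the paper does, and it is unclear it would work: the paper inducts on $n$ (with trivial base $n=0$), and the inductive step peels off a rank-one piece in both $\GL_n$-blocks simultaneously, passing from $(n,m)$ to $(n-1,m-1)$. More importantly, the substantive content of the paper is exactly the part you flag as ``the hard part'' and leave unspecified. The orbit analysis is intricate: representatives are parametrised via the relative Bruhat decomposition for $P_{n,n,m-n}\backslash G/P_{m-n,n,n}$ together with two $\GL_n$-matrices $Y,Z$; admissibility forces relations among their blocks (e.g.\ $Y_2=Z_2$); the generic cosets ($Y_2\neq 0$) are handled by the induction on $n$; and the degenerate cosets ($Y_2=0$) require a separate reduction to an auxiliary ``alternative geometric statement'' on a smaller matrix space $X_{n,k}$, with its own action, its own notion of admissibility, and its own induction. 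Your sketch of ``Jacquet-type filtration'' and ``Luna-slice descent'' does not come close to this structure, and in particular gives no indication of how to handle the degenerate strata, which is precisely where Nien's earlier direct attempt (for $m=n$) had a gap.
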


Note that in the case of $n=m$ we get the uniqueness of classical Shalika models. 

Our proof of theorem \ref{thm:main} is geometric in nature.
\subsubsection{$\textsc{A Gelfand pair in } \GL_{n}(F)\times \GL_{m+n}(F)$}
We now describe another Gelfand pair. 
We choose a standard parabolic subgroup corresponding to the partition $\{n,m\}$ of $n+m$: \[P=P_{n,m}:=\left\{ \left[\begin{array}{cc}
g_{1} & u\\
 & g_{2}
\end{array}\right] \mid g_{1}\in \GL_{n}(F),g_{2}\in \GL_{m}(F),u\in \Mat_{n,m}(F)\right\}< G \] 
and define: 
\[\Delta P=\Delta P_{n,m}:=\left\{ (g_{1},\left[\begin{array}{cc}
g_{1} & u\\
 & g_{2}
\end{array}\right])\in \GL_{n}(F) \times P_{n,m} \right\}. \]
and let $\chi$ be a character of $\Delta P$ which is trivial on $\{(I_n,\left[\begin{array}{cc}
I_n & u\\
 & I_m
\end{array}\right])\mid u\in \Mat_{n,m}\}.$
We will deduce the following theorem from Theorem \ref{thm:main}.
\begin{introtheorem}
\label{thm:summer}The pair $(\GL_n(F) \times G,\Delta P,\chi)$ is a Gelfand pair.
\end{introtheorem}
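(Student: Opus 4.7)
The plan is to deduce Theorem~\ref{thm:summer} from Theorem~\ref{thm:main} by embedding $\GL_n(F)\times G$ as a Levi subgroup of $\GL_{2n+m}(F)$ and transferring the Shalika multiplicity-one statement via the geometric lemma. By Frobenius reciprocity, the pair $(\GL_n(F)\times G,\Delta P,\chi)$ is Gelfand if and only if
\[\dim\Hom_{\Delta P}(\pi\boxtimes\sigma,\chi)\le 1\]
for every $\pi\in\Irr(\GL_n(F))$ and $\sigma\in\Irr(G)$. Since $\chi$ descends through the unipotent radical of $\Delta P$ to the Levi $\GL_n(F)\times\GL_m(F)$, it decomposes as $\chi=\chi_1\boxtimes\chi_2$; absorbing $\chi_1$ into $\pi$ we may assume $\chi_1$ is trivial. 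A second application of Frobenius then identifies the Hom space with $\Hom_G(\sigma,\Ind_P^G(\pi\boxtimes\chi_2))$, so Theorem~\ref{thm:summer} is equivalent to the multiplicity-freeness of the Bernstein-Zelevinsky product $\pi\times\chi_2$ for every irreducible $\pi$ of $\GL_n(F)$ and every character $\chi_2$ of $\GL_m(F)$.

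I would then apply Theorem~\ref{thm:main} with parameters $(n,n+m)$ in place of $(n,m)$. Set $G':=\GL_{2n+m}(F)$, $P':=P_{n,n+m}$ with Levi $L':=\GL_n(F)\times\GL_{n+m}(F)$ and unipotent radical $U'$, and let $H_{n,n+m}=H_0U''$ be the corresponding uneven Shalika subgroup, where $U''$ is the unipotent radical of the finer parabolic $P_{n,n,m}$. The key geometric observation is that under the block-diagonal embedding $L'\hookrightarrow G'$, the subgroup $\Delta P\subset L'$ identifies with $H_{n,n+m}\cap L'=H_0\cdot U_{(2,3)}$, where $U_{(2,3)}\subset U''$ is the component whose only non-zero entries lie in the $(2,3)$-block. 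Choose a twisted Shalika character $\psi$ of $H_{n,n+m}$ with $\psi|_{H_0}=\chi$; then $\psi|_{\Delta P}=\chi$ automatically.

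Given irreducible $\pi$ and $\sigma$, form $\Sigma:=\Ind_{P'}^{G'}(\pi\boxtimes\sigma)$ and compute
\[\Hom_{G'}(\Sigma,\Ind_{H_{n,n+m}}^{G'}\psi)=\bigoplus_{w}\Hom_{H_{n,n+m}\cap wP'w^{-1}}\bigl((\pi\boxtimes\sigma)^w,\psi|_{\cap}\bigr)\]
via the Mackey/geometric lemma, with $w$ ranging over representatives of $H_{n,n+m}\backslash G'/P'$. The identity coset contributes zero because $\psi_0(\tr u_1)$ is non-trivial on the $(1,2)$-block of $U'$ while $\pi\boxtimes\sigma$ acts trivially there. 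The Weyl element $w_0$ swapping the two blocks of size $n$ yields, by direct inspection of the matrices in $H_{n,n+m}\cap w_0P'w_0^{-1}$ and of the conjugated action $(\pi\boxtimes\sigma)^{w_0}$, exactly the Hom space $\Hom_{\Delta P}(\pi\boxtimes\sigma,\chi)$. The technical core of the proof is then to verify that every remaining double coset contributes zero, by locating on each of them a non-trivial direction of $\psi|_{\cap}$ along which $(\pi\boxtimes\sigma)^w$ acts trivially.

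The remaining obstacle is closing the upper bound: Theorem~\ref{thm:main} immediately yields $\dim\Hom_{G'}(\Sigma,\Ind_{H_{n,n+m}}^{G'}\psi)\le 1$ only when $\Sigma$ is irreducible. In the $p$-adic setting I would handle the general case by twisting $\sigma$ by unramified characters $|\det|^s$: for generic $s$ the induction $\Ind_{P'}^{G'}(\pi\boxtimes(\sigma\otimes|\det|^s))$ is irreducible, and a continuity argument for Hom-dimensions over the relevant component of the Bernstein center propagates the bound down to $s=0$. In the finite-field case, $\Sigma$ is semisimple and one instead argues that the $\Delta P$-functional, being realized in a single $w_0$-summand of the Mackey decomposition, factors through at most one irreducible constituent of $\Sigma$. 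This propagation step is the subtlest part of the argument.
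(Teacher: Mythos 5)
Your proposal takes a genuinely different route from the paper, and the route has a gap that you yourself flag but do not close. The paper's argument is a short contraposition that never touches Mackey theory or the geometric lemma. Suppose the pair failed to be Gelfand: then some $\pi\in\Irr(L')$, with $L'\cong\GL_n\times\GL_{n+m}$ viewed as the Levi of $P'=P_{n,n+m}\subset\GL_{2n+m}$, admits two linearly independent maps to $\Ind_{\Delta P}^{L'}\chi$; a Schur-type case analysis on the kernel shows $\pi\oplus\pi$ then \emph{embeds} there. Applying the exact, injection-preserving functor $\Ind_{P'}^{\GL_{2n+m}}(\,\cdot\,\otimes\psi_u)$ and using transitivity of induction gives an embedding of $\Sigma\oplus\Sigma$, with $\Sigma=\Ind_{P'}^{\GL_{2n+m}}(\pi\otimes\psi_u)$, into $\Ind_{H}^{\GL_{2n+m}}\psi$. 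Taking any irreducible subrepresentation $\rho$ of the finite-length module $\Sigma$ produces two linearly independent maps $\rho\to\Ind_{H}^{\GL_{2n+m}}\psi$, contradicting Theorem~\ref{thm:main}. Crucially, this never requires $\Sigma$ to be irreducible, and no double-coset analysis is needed.

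Your Mackey-theoretic plan has two serious problems, one of which you acknowledge. First, the "closing the upper bound" step fails: the map $s\mapsto\dim\Hom_{G'}(\Sigma_s,\Ind_H^{G'}\psi)$ is upper semicontinuous, not lower semicontinuous, so it can jump \emph{upward} exactly at the reducibility locus $s=0$; a bound at generic $s$ propagates in the wrong direction. The finite-field alternative ("the functional factors through at most one constituent") is asserted without justification and is not true in general for a semisimple $\Sigma$ whose distinct constituents each meet $\Ind_H^{G'}\psi$. Second, the displayed Mackey identity is a genuine direct-sum decomposition only over a finite field; in the $p$-adic case the geometric lemma gives a filtration of the restriction, so $\Hom_{\Delta P}(\pi\boxtimes\sigma,\chi)$ need not appear as a direct summand of $\Hom_{G'}(\Sigma,\Ind_H^{G'}\psi)$, and the vanishing of the other cosets, which you describe as "the technical core," is left entirely open. (There is also a minor slip: the second Frobenius step should land in $\Hom_G(\sigma,\Ind_P^G(\widetilde{\pi}\boxtimes\chi_2))$, with the contragredient of $\pi$ and a modulus twist.) The paper's embedding argument sidesteps all of this simultaneously.
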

In the case of $\chi\equiv1$, the theorem can be formulated in an additional way.
We denote the Bernstein-Zelevinski product, introduced in \cite{BZ-prod}, by $\times$.
\begin{introtheorem}
\label{thm:erez-version}If $\pi$ is an irreducible representation
of $\GL_{n}(F)$ and $1_m$ is the trivial representation of $\GL_{m}(F)$ then $\pi\times1_m$ is multiplicity-free. 
\end{introtheorem}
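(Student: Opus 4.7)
The plan is to deduce Theorem \ref{thm:erez-version} directly from Theorem \ref{thm:summer} by running two Frobenius reciprocity computations that identify both Hom spaces with the same auxiliary $\GL_n$-module. Fix an irreducible $\tau$ of $G = G_{n+m}$; the goal is to show $\dim\Hom_G(\tau, \pi\times 1_m) \leq 1$. Since the Bernstein--Zelevinsky product uses normalized parabolic induction, we have $\pi\times 1_m = \Ind_{P_{n,m}}^G\bigl((\pi\boxtimes 1_m)\otimes \delta_P^{1/2}\bigr)$, where the modular character satisfies $\delta_P^{1/2}|_L = |\det|^{m/2}\boxtimes|\det|^{-n/2}$ (and is trivial when $F$ is finite). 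By Frobenius reciprocity, together with the $U$-, $\GL_m$- and $\GL_n$-equivariance conditions imposed in turn, one obtains a canonical isomorphism
\[
\Hom_G(\tau,\pi\times 1_m)\ \cong\ \Hom_{\GL_n}(W,\pi'),
\]
where $\pi' := \pi\otimes|\det|^{m/2}$ and $W$ is the largest quotient of the Jacquet module $r_U(\tau)$ on which the Levi factor $\GL_m$ acts by the character $\chi' := |\det|^{-n/2}$.

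Now let $\chi$ be the character of $\Delta P$ that is trivial on the $\GL_n$- and $U$-coordinates and equals $\chi'$ on the $\GL_m$-coordinate; by construction $\chi$ is trivial on the unipotent block prescribed in Theorem \ref{thm:summer}. For any irreducible $\rho$ of $\GL_n(F)$, Frobenius reciprocity combined with the same unfolding procedure gives
\[
\Hom_{\GL_n\times G}(\rho\boxtimes\tau,\Ind_{\Delta P}^{\GL_n\times G}\chi)\ \cong\ \Hom_{\Delta P}(\rho\boxtimes\tau,\chi)\ \cong\ \Hom_{\GL_n}(W,\rho^\vee),
\]
for the same $W$ as in the previous paragraph (both unfoldings funnel through the $\chi'$-quotient of $r_U(\tau)$). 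Specializing $\rho := (\pi')^\vee = \pi^\vee\otimes|\det|^{-m/2}$, which is irreducible because $\pi$ is, one has $\rho^\vee\cong\pi'$, so Theorem \ref{thm:summer} bounds $\dim\Hom_{\GL_n}(W,\pi')\leq 1$; consequently $\dim\Hom_G(\tau,\pi\times 1_m)\leq 1$, as required.

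The only genuine delicacy is the bookkeeping of the modular character $\delta_P^{1/2}$ introduced by normalized induction: it dictates precisely which character $\chi$ on $\Delta P$ and which twist of $\pi^\vee$ must be chosen for $\rho$ so that the two Hom spaces coincide. Over a finite field this subtlety disappears and one simply invokes Theorem \ref{thm:summer} with $\chi\equiv 1$, in line with the reformulation remark preceding the theorem. Beyond this, the argument is a sequence of standard Frobenius adjunctions and routine Jacquet-module identifications, with no further obstruction once Theorem \ref{thm:summer} is in hand.
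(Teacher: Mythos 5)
Your argument is correct and supplies the Frobenius-reciprocity/Jacquet-module unfolding that the paper treats as a routine reformulation of Theorem~\ref{thm:summer}. The one variation worth noting: you choose $\chi$ to carry $|\det|^{-n/2}$ on the $\GL_m$-factor so that the modular character $\delta_P^{1/2}$ from normalized induction is absorbed at the outset, whereas the paper's remark suggests $\chi\equiv1$, in which case one would instead twist $\pi$ and $\tau$ by characters of $\GL_{n+m}$ at the end to pass from $\Ind_P^G(\rho^\vee\boxtimes 1)$ to $\pi\times 1_m$; both options are permitted since Theorem~\ref{thm:summer} holds for any $\chi$ trivial on the unipotent block, and your choice is slightly cleaner.
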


\subsection{Related work}
Some special cases of the theorems are already known.

\subsubsection{$\textsc{Shalika models}$}
The uniqueness of classical Shalika models (the $m=n$ case) was first
proven for p-adic fields in \cite{JacRal} by deducing it from the uniqueness
of linear models. The uniqueness of Shalika models in the case of Archmidean local fields was proven in \cite{shalikaR} by embedding them in linear models as well. In \cite{Nien} Nien outlined a direct proof using a geometric technique
similar to the one in the current paper, but there
is a gap in Lemma 3.9. This is explained in Appendix \ref{app:nien}. This paper in particular fills this gap.

In the case $m=n$, the uniqueness property was proven for twisted Shalika models
in \cite{ChenSun} by embedding them in twisted linear models. 
The generalization to $n\neq m$, as well as the proof for finite fields, are novel.

Several generalizations of Shalika models to other groups have been studied. For example, the uniqueness of Shalika models for $SO(4n)$ was shown in \cite{JiQi}.

\subsubsection{$\textsc{A Gelfand pair in } \GL_{n}\times \GL_{m+n}$}
In the case of a non-Archmidean local field, Theorem \ref{thm:erez-version} is a special case of \cite{erez}. It is shown there that $\text{soc}(\pi\times\tau)$ is actually irreducible for any $\square$-irreducible $\tau$, and in particular for any irreducible unitary representation $\tau$. The proof in \cite{erez} uses the classification of representations of $\GL_{n}(F)$ for a non-Archimedean field. We use more elementary method to show that $\text{soc}(\pi\times 1)$ is multiplicity-free. Our proof works over finite fields as well.

Note that Theorem \ref{thm:summer} is weaker than the claim that $(\GL_{n+m},\GL_{n})$ is a Gelfand pair. This pair is indeed a Gelfand pair in the case of $n=1$ for any local field (see \cite{k1ch0}, \cite{k1chp} and \cite{k1arch}), but fails in the case of a finite field already for $n=m=1$.

Another result generalizing Theorems \ref{thm:summer} is true for $n=1,2$. Recall that $L'$ is the Levi part of $P_{n,m}$. In \cite{dimaiz} it is proven that the pair
$(L'\times G, \widetilde{\Delta P})$
is a Gelfand pair
where $\widetilde{\Delta P}=\{(X,Y)\in L'\times P \mid X$ is the Levi part of $Y\}$.

Yaron Brodsky proved a result similar to Theorems \ref{thm:summer} for permutation groups. He showed that $(S_{n+m}\times S_n, \Delta S)$ is a Gelfand pair, where $\Delta S=\{(\s,\s') \mid \s\in S_{n+m}\text{ which preserves $[n]$ and $\s'=\s|_{[n]}$}\}$. 

\subsection{Structure of this paper}
Section \ref{sec:pre} presents some necessary preliminaries. In Subsection \ref{sec:ded} we prove that Theorem \ref{thm:main} implies Theorem \ref{thm:summer} and the rest of Section \ref{sec:main} is devoted to the proof of Theorem \ref{thm:main}. In Subsection \ref{sec:dec} we prove several useful lemmas we use in Subsections \ref{sec:dense} and \ref{sec:red} to reduce the problem to a simpler problem presented in Subsection \ref{sec:genset}. Finally, in Subsection \ref{sec:simprob} we prove this similar problem.
\subsection{Acknowledgements}
I wish to thank Avraham Aizenbud, Tomer Novikov, Lior Silberberg and Oksana Yanshyna for helpful discussion and suggestions. I would like to express my gratitude to my advisor Dmitry Gourevitch.

The author was supported in part by the BSF grant N 2019724 and by a fellowship endowed by Mr. David Lopatie.
\section{Preliminaries}\label{sec:pre}

\subsection{Notations}
We will use the terminology introduced in \cite{BZ} by Bernstein and Zelevinsky regarding representations of $l$-groups.
\begin{itemize}
\item
Let $X$ be an $l$-space. Write $S(X)$ for for the space of compactly supported
locally constant $\bb{C}$-valued functions on $X$. These functions are called Schwartz functions. The linear functionals in the dual space $S^{*}(X)$
are called distributions.
\item For a representation $(\pi,G,V)$ of an $l$-group, the subset of smooth vectors (i.e. vectors in $V$ whose stabiliser is open) is denoted by $V^{\infty}$. A representation is called smooth if $V^\infty =V$. The representation is  admissible if for any open subgroup $H< G$
the subspace of $H$-invariant vectors $V^H$ is finite-dimensional.
\item If $(\pi,G,V)$ is a representation and $\chi$ is a character of
a subgroup $H$, we define the subspace of invariants $V^{H,\chi}:=\{v\in V \mid \pi(h)v=\chi(h)v\}$
and the space of coinvariants $V_{H,\x}:=V/\text{span}\{\pi(h)v-\chi(h)v \mid h\in H,v\in V\}$.
\item  Denote by $e_{i}$ and $E_{ij}$ elements of the standard bases
of $F^{N}$ and $\Mat_{N,M}$, the dimensions $M,N$ will be clear
from context. 
\item For a set $S\subset \bb R$ and scalars $\alpha,\beta\in\bb {R}$ we denote $\alpha S+\beta=\left\{ \alpha s+\beta \mid s\in S\right\}$. We denote $[n]:=\left\{1,...,n\right\}$ as well.
\item
Write
$$w_{k}=\left[\begin{array}{ccc}
0 &  & 1\\
 & \iddots\\
1 &  & 0
\end{array}\right]\in \GL_{k}.$$ 
\item
The permutation matrices in this paper are in row representation. I.e., for a permutation $\s\in S_n$ corresponds the matrix $Q\in \GL_n$ satisfying $Q_{i,\downarrow}=e_{\s(i)}$ for all $i$.
\item
For a permutation $p\in S_{n}$ and a set $S\subset[n]$,
we denote $p(S)=\{p(i)\mid i\in S\}$. 
\item Let $G_{x}$ and $[x]$ denote the stabiliser and orbit of a group $G$ acting on a space
$X$ at the point $x\in X$. We write $x_1\sim x_2$ if they are in the same orbit or coset.
\item
Given a matrix $A\in \Mat_{n}$ and a partition  $n=n_{1}+...+n_{k}$,
the partition of $A$ to blocks according to $\{n_{1},...,n_{k}\}$
is the partition where the block in the position $(i,j)$ has
$n_{i}$ rows and $n_{j}$ columns. Therefore, the standard parabolic subgroup corresponding to this partition, denoted by $P_{n_1,...,n_k}$, is the subgroup of upper triangular block matrices with diagonal blocks of dimensions $n_i\times n_i$.
\item
For a set $S$ we write $FG(S)$ for the free group on the elements of $S$.
\end{itemize}
\subsection{The Gelfand-Kazhdan criterion}\label{sec:GK}

In this section, G is a general group (not necessarily $\GL_{n+m}(F)$) and $H$ is a subgroup. 
As before, we say that a representation $(\pi,G,V)$ is multiplicity-free
if $\dim\Hom_{G}(\rho,\pi)\leq1$ for any $\rho\in\Irr(G)$. 

A map $\tau:G \to G$ is called an anti-involution if $\tau\circ\tau=\text{id}_{G}$ and
$\tau(gg')=\tau(g')\tau(g)$
for any $g,g'\in G$.

The Gelfand-Kazhdan criterion is an important tool for identifying
Gelfand pairs. We will state the versions of the criterion we will
need.
We first deal with the case of a finite group. Let $\psi$ be a character of $H$ and $\tau$ an anti-involution. Write $H':=\tau(H)$ and $\widetilde{H}:=H'\times H$. 
Define an action of $\widetilde{H}$ on $G$ by 
$
    (h_{1},h_{2}).g=h_{1}gh_{2}^{-1}
$ for $h_{1}\in H'$, $h_{2}\in H$ and $g\in G$.
\begin{definition}
We say that $[g]$ is $\psi$-admissible if $\psi(\tau(h_{1})h_{2}^{-1})=1$
for all $(h_{1},h_{2})\in \widetilde{H}_{g}$ and that $[g]$ is $\psi$-$\tau$-invariant
if there exist $(h_{1},h_{2})\in \widetilde{H}$ with $(h_{1},h_{2}).g=\tau(g)$
and $\psi(\tau(h_{1})h_{2}^{-1})=1$.
\end{definition}
Now we can formulate Gelfand's trick for finite groups, which is justified by the classical argument described in \cite{aiz}.
\begin{thm}\label{thm:GKfinite}
Let $G$ be a finite group and $H< G$ be a subgroup. If there exists an anti-involution $\tau$ of $G$ and a character $\psi$ of $H$ such that every $\psi$-admissible coset in $\tau(H) \backslash G / H$ is $\psi$-$\tau$-invariant, then $(G,H,\psi)$ is a twisted Gelfand pair.
\end{thm}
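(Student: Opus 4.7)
The plan is to apply the classical Gelfand trick to the Hecke convolution algebra $\mathcal{A} := \mathcal{H}(G, H, \psi)$ of $(H, \psi)$-bi-equivariant functions on $G$. A standard identification (see, e.g., \cite{aiz}) gives $\mathcal{A} \cong \operatorname{End}_G(\Ind_H^G \psi)$, so $(G, H, \psi)$ is a twisted Gelfand pair precisely when $\mathcal{A}$ is commutative under convolution. Each $f \in \mathcal{A}$ is determined by its values on representatives of the relevant double-coset space — here $\tau(H) \backslash G / H$, acted on by $\widetilde{H} = \tau(H) \times H$ — and it must vanish on any coset failing $\psi$-admissibility, since a stabilizing pair $(h_1, h_2) \in \widetilde{H}_g$ with $\psi(\tau(h_1) h_2^{-1}) \neq 1$ would force $f(g) = 0$ by the transformation law.

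The central move is to introduce the map $\sigma(f)(g) := f(\tau(g))$. Because $\tau$ is an anti-involution, a direct change of variables in the convolution formula yields $\sigma(f_1 * f_2) = \sigma(f_2) * \sigma(f_1)$, so $\sigma$ reverses convolution. After verifying — using the characters $\psi$ on $H$ and $\psi \circ \tau$ on $\tau(H)$, and the double-coset space $\tau(H) \backslash G / H$ — that $\sigma$ preserves $\mathcal{A}$, one reduces commutativity to showing $\sigma = \mathrm{id}_{\mathcal{A}}$. This in turn I would check pointwise on representatives $g$ of the $\psi$-admissible cosets: the $\psi$-$\tau$-invariance hypothesis provides $(h_1, h_2) \in \widetilde{H}$ with $\tau(g) = h_1 g h_2^{-1}$ and $\psi(\tau(h_1) h_2^{-1}) = 1$, so bi-equivariance of $f$ forces $f(\tau(g)) = f(g)$. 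Non-admissible cosets contribute zero to both $f$ and $\sigma(f)$, so they cause no trouble.

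Once $\sigma = \mathrm{id}$ on $\mathcal{A}$, commutativity is immediate from
\[
f_1 * f_2 = \sigma(f_1 * f_2) = \sigma(f_2) * \sigma(f_1) = f_2 * f_1
\]
for every $f_1, f_2 \in \mathcal{A}$. I expect the main subtlety to be the bookkeeping of characters and bi-equivariance conventions when $\tau(H) \neq H$: one must set up the identification so that $\sigma$ is genuinely an anti-involution of $\mathcal{A}$ itself, rather than an antihomomorphism to a twisted companion algebra, and verify that $\psi$-admissibility is precisely the condition making this identification well-defined on every double coset, so that the equality $f(\tau(g)) = f(g)$ supplied by $\psi$-$\tau$-invariance actually propagates to $\sigma(f) = f$ throughout $\mathcal{A}$.
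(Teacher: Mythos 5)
Your proposal has a genuine gap, and it is precisely the one you flag at the end but do not resolve. You define $\mathcal{A} := \mathcal{H}(G,H,\psi)$ as the algebra of $(H,\psi)$-bi-equivariant functions; such functions live on double cosets in $H \backslash G / H$, \emph{not} on $\tau(H)\backslash G/H$ as you claim. More seriously, the map $\sigma(f)(g) := f(\tau(g))$ does \emph{not} preserve $\mathcal{A}$: if $f$ transforms under $H\times H$ via $\psi$, then $\sigma(f)$ transforms under $\tau(H)\times\tau(H)$ via $\psi\circ\tau$, so $\sigma$ is an anti-isomorphism $\mathcal{H}(G,H,\psi)\to\mathcal{H}(G,\tau(H),\psi\circ\tau)$, and these are distinct algebras unless $\tau(H)=H$ and $\psi\circ\tau=\psi|_H$. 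Neither assumption is part of the theorem, and in the paper's application to uneven Shalika models $\tau(H)\ne H$ (it is the block-lower-triangular analogue). To close the loop you would need a canonical algebra \emph{isomorphism} $\mathcal{H}(G,\tau(H),\psi\circ\tau)\to\mathcal{H}(G,H,\psi)$ whose composite with $\sigma$ is the identity; establishing such an isomorphism amounts to showing $\Ind_H^G\psi\cong\Ind_{\tau(H)}^G(\psi\circ\tau)$, which is exactly what must be argued separately (and is what the additional hypotheses in Theorem \ref{thm:GKcrit} are for in the non-Archimedean case). So the ``bookkeeping'' you defer is the actual mathematical content, not a routine verification.

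The paper does not give a self-contained proof; it cites the classical argument in \cite{aiz}, which runs through the Gelfand--Kazhdan framework: one works with the space of functions on $G$ transforming under $\widetilde H = \tau(H)\times H$ via the character $\widetilde\psi(h_1,h_2)=\psi(\tau(h_1)h_2^{-1})$ (which \emph{is} naturally indexed by admissible cosets in $\tau(H)\backslash G/H$), shows every such function is $\tau$-invariant using the hypothesis, and then concludes via the GK criterion relating $\Hom_G(\pi,\Ind_H^G\psi)$ and $\Hom_G(\widetilde\pi,\Ind_{\tau(H)}^G(\psi\circ\tau))$. That space is a bimodule, not a single Hecke algebra, which is why the commutativity trick you propose does not transfer directly. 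If you want to stay with a Hecke-algebra formulation you must first reduce to the case $\tau(H)=H$, $\psi\circ\tau=\psi$, and that reduction is the missing step.
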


Now we move to the  non-Archimedean case. 
Assume that $G$ is an $l$-group, $H$ is a subgroup and $\tau$ is an anti-involution. Define $H'$ and $\widetilde{H}$ as before. 

We define the action of $\widetilde{H}$ on $G$ in the same way as in the finite case. On  $S(G)$ and $S^*(G)$ we define: 
\begin{equation*}
\begin{gathered}
((h_{1},h_{2}).f)(g)=\psi(\tau(h_{1})^{-1}h_{2})f(h_{1}^{-1}gh_{2}),\\
    ((h_{1},h_{2}).T)(f)=T((h_{1}^{-1},h_{2}^{-1}).f)
\end{gathered}
\end{equation*} 
for $h_{1}\in H'$, $h_{2}\in H$,$g\in G, f\in S(G)$ and $T\in S^*(G)$. We define $\psi$-admissibility and $\psi$-$\tau$-invariance in the same way as in the finite case.

We need the following result.
\begin{theorem}[Gelfand-Kazhdan Criterion] \label{thm:GK-orig}\cite[Theorem 4]{GK}
Let $\pi\in\Irr(G)$. If every distribution in $S^*(G)$ invariant to the action of $\widetilde{H}$ defined above is also $\tau$ invariant, then: 
\[\dim \Hom _G ( \pi,Ind_H^G\psi) \cdot\dim \Hom _G (\widetilde{\pi},Ind_{H'}^G(\psi\circ \tau))\leq 1.\]
\end{theorem}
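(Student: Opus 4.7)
The final statement is the classical Gelfand--Kazhdan criterion for $l$-groups, and I would prove it by the standard three-step argument. First, I would apply smooth Frobenius reciprocity to identify
\[
\Hom_G(\pi,\Ind_H^G\psi)\cong\Hom_H(V_\pi^\infty,\bb{C}_\psi),\qquad \Hom_G(\widetilde\pi,\Ind_{H'}^G(\psi\circ\tau))\cong\Hom_{H'}(V_{\widetilde\pi}^\infty,\bb{C}_{\psi\circ\tau}),
\]
so that if $a,b$ denote these dimensions, the goal reduces to $ab\le 1$.

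Second, I would construct an injective bilinear map
\[
\Phi\colon\Hom_H(V_\pi^\infty,\bb{C}_\psi)\otimes\Hom_{H'}(V_{\widetilde\pi}^\infty,\bb{C}_{\psi\circ\tau})\longrightarrow S^*(G)^{\widetilde H}
\]
into the space of $\widetilde H$-invariant distributions. For $\eta,\xi$ in the two Hom spaces, define $\Phi(\eta\otimes\xi)(f)=\eta(\pi(f)v_\xi)$, where $v_\xi\in V_\pi$ represents $\xi$ under the admissibility pairing $V_\pi^\infty\times V_{\widetilde\pi}^\infty\to\bb{C}$ (valid because the relevant irreducible smooth representations are admissible). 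A direct computation from the equivariance of $\eta,\xi$ and the explicit action of $\widetilde H$ on $S^*(G)$ recorded in the excerpt verifies $\widetilde H$-invariance of $\Phi(\eta\otimes\xi)$, and the nondegeneracy of the pairing yields injectivity of $\Phi$.

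Third, I would apply the hypothesis that every $\widetilde H$-invariant distribution is also $\tau$-invariant. Transcribing $\tau$-invariance through the formula $\Phi(\eta\otimes\xi)(f)=\eta(\pi(f)v_\xi)$ produces an identification of $\Phi(\eta\otimes\xi)$ with $\Phi(\xi'\otimes\eta')$ under the canonical duality swapping the roles of $\pi$ and $\widetilde\pi$; this forces the image of $\Phi$ to be at most one-dimensional, and combined with injectivity it yields $ab\le 1$.

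The main obstacle is Step~2: producing $\Phi$ as a genuine map into $S^*(G)$ in the smooth $l$-group setting rather than as a formal expression, and rigorously verifying both $\widetilde H$-invariance and injectivity while staying within the admissible framework. Once Step~2 is in place, the concluding step is a symbolic manipulation of the $\tau$-invariance condition through the defining formula for $\Phi$.
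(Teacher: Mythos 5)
The paper itself gives no proof of this statement — it is stated with a direct citation to Gelfand--Kazhdan \cite[Theorem 4]{GK}, and the only nearby argument is for the derived Theorem~\ref{thm:GKcrit}, whose proof is likewise delegated ("standard"). So I can only assess your sketch on its own terms. Your overall three-step strategy (Frobenius reciprocity, construction of a bilinear map into $\widetilde H$-invariant distributions, then exploiting $\tau$-invariance) is exactly the classical Gelfand--Kazhdan argument, and Steps~1 and~3 are fine. Step~2, however, has a real gap: the vector $v_\xi\in V_\pi$ that you want to "represent" $\xi\in\Hom_{H'}(V_{\widetilde\pi},\bb{C}_{\psi\circ\tau})$ does not exist in general. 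Admissibility gives $\widetilde{\widetilde\pi}\cong\pi$, which identifies \emph{smooth} linear functionals on $V_{\widetilde\pi}$ with vectors in $V_\pi$; but an $(H',\psi\circ\tau)$-equivariant functional $\xi$ is typically not smooth — it is not fixed by any compact open subgroup (compare: a Whittaker functional is never a smooth vector of the dual). So $\Phi(\eta\otimes\xi)(f)=\eta(\pi(f)v_\xi)$ is not well-defined.

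The standard fix, and what you should write instead, is to put the operator $\pi(f)$ on the \emph{other} side. For $f\in S(G)$, the functional $\eta\circ\pi(f)\colon V_\pi\to\bb{C}$ \emph{is} smooth: if $f$ is bi-invariant under a compact open $K$, then $\eta\circ\pi(f)$ is $K$-invariant, hence lies in $V_{\widetilde\pi}$. One then sets
\[
\Phi(\eta\otimes\xi)(f)=\xi\bigl(\eta\circ\pi(f)\bigr),
\]
which is well-defined without ever needing to realize $\xi$ as a vector in $V_\pi$. With this replacement the verification of $\widetilde H$-invariance, the injectivity (via admissibility: the span of the operators $\pi(f)$ is all of $\operatorname{End}(V_\pi^K)$ for each $K$, so if $\Phi(\eta\otimes\xi)=0$ then $\eta=0$ or $\xi=0$), and the concluding use of $\tau$-invariance all go through as you describe. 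Your formula is formally the dual rewriting of this one, so the rest of your plan is unaffected once the definition is repaired.
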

This result is true for any $l$-group $G$. From now until the end of the section we assume $G=\GL_{n_1}(F)\times...\times \GL_{n_k}(F)$ where $F$ is a non-Archimedean field. We can deduce a stronger result in this case under certain conditions on the anti-involution $\tau$. For  $(g_1,...,g_k)\in G$ we define $(g_1,...,g_k)^t=(g_1^t,...,g_k^t)$.
\begin{theorem}[Application of Gelfand-Kazhdan Criterion]
\label{thm:GKcrit} Let $(\pi,V)$ be an irreducible representation of $G$. Assume that there exist some fixed $a,b\in G$ such that for all $g\in G$ and  $h'\in H'$ hold $\tau(g)=ag^ta^{-1}$, $b\tau(h'^{-1})b^{-1}\in H$ and $\psi(b\tau(h'^{-1})b^{-1})=\psi(\tau(h'))$. If every distribution invariant to the action of $\widetilde{H}$ defined above is also $\tau$ invariant, then: \[\dim \Hom _G ( \pi,Ind_H^G\psi)\leq 1.\]
\end{theorem}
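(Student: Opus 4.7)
The plan is to establish the one-sided inequality
\[
L := \dim\Hom_G(\pi,\Ind_H^G\psi) \;\le\; \dim\Hom_G(\widetilde\pi,\Ind_{H'}^G(\psi\circ\tau)) =: R
\]
and combine it with Theorem~\ref{thm:GK-orig}, which under the $\widetilde{H}$-invariance hypothesis yields $L\cdot R\le 1$. Together these force $L\le 1$: if $L\ge 1$, then $R\ge L\ge 1$, so $1\ge LR\ge L^2$, whence $L=1$.

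For the inequality, I would first rewrite the contragredient $\widetilde\pi$ as a twist of $\pi$, combining two ingredients. The first is the Gelfand--Kazhdan description of the contragredient for irreducible admissible representations of $\GL_n(F)$, extended factor-wise to $G=\GL_{n_1}(F)\times\cdots\times\GL_{n_k}(F)$: namely $\widetilde\pi\cong\pi\circ\theta$ with $\theta(g)=g^{-t}$. The second is the hypothesis $\tau(g)=ag^ta^{-1}$, which gives $\theta(g)=a^{-1}\tau(g^{-1})a$, so setting
\[
\Phi(g) := b\,\tau(g^{-1})\,b^{-1} = (ba)\,\theta(g)\,(ba)^{-1},
\]
we obtain $\pi\circ\theta\cong\pi\circ\Phi$ via the intertwiner $\pi(ba)$, hence $\widetilde\pi\cong\pi\circ\Phi$ as representations of $G$. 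Here $\Phi$ is indeed an automorphism of $G$: since $\tau$ is an anti-involution, the map $g\mapsto\tau(g^{-1})$ is an (involutive) group homomorphism, and conjugation by $b$ is inner.

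Next I would compare the two induced representations using the remaining hypotheses $\Phi(H')\subset H$ and $\psi\circ\Phi|_{H'}=\psi\circ\tau|_{H'}$. These are tailored for the following change of variables: for $V=\Ind_{H'}^G(\psi\circ\tau)$, the map $f\mapsto f\circ\Phi^{-1}$ gives a $G$-equivariant isomorphism
\[
V^{\Phi^{-1}} \;\cong\; \Ind_{\Phi(H')}^G\bigl(\psi|_{\Phi(H')}\bigr),
\]
where $V^{\Phi^{-1}}$ denotes $V$ with $G$-action precomposed by $\Phi^{-1}$. Combining with $\widetilde\pi\cong\pi\circ\Phi$, so that $\Hom_G(\widetilde\pi,V)\cong\Hom_G(\pi,V^{\Phi^{-1}})$, one gets $R=\dim\Hom_G(\pi,\Ind_{\Phi(H')}^G\psi|_{\Phi(H')})$. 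Since $\Phi(H')\subset H$, induction in stages supplies a $G$-equivariant embedding $\Ind_H^G\psi\hookrightarrow\Ind_{\Phi(H')}^G\psi|_{\Phi(H')}$, yielding $L\le R$. The main obstacle is really just careful bookkeeping of the twists: one must use the anti-involution property precisely at the step where $\Phi$ is shown to be a homomorphism, and use the compatibility $\psi(\Phi(h'))=\psi(\tau(h'))$ to ensure that the character appearing after the change of variables is truly $\psi|_{\Phi(H')}$ rather than some other twist.
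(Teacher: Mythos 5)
Your argument is correct and is exactly the ``standard'' argument the paper gestures at: the paper's proof consists of the single sentence that the claim is standard and uses $\widetilde{\pi}\cong\pi\circ\kappa$ from Gelfand--Kazhdan, which is precisely your $\widetilde\pi\cong\pi\circ\theta$; your proposal simply supplies the bookkeeping the paper omits. One small remark: under the stated hypotheses one actually has $\Phi(H')=bHb^{-1}=H$ (not merely $\Phi(H')\subset H$), so the final inclusion of induced spaces is an equality and $L=R$, but the weaker $L\le R$ you argue for is already enough.
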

\begin{proof}
The proof is standard and uses the fact that $\widetilde{\pi}\isom \pi\circ \kappa$  (see \cite[Theorem 2]{GK}).
\end{proof}
The following lemma is a useful tool for verification of the condition of the Gelfand-Kazhdan Criterion. Let $U$ be a unipotent subgroup of $G$ and denote $U':=\tau(U)$. We take any character of $\widetilde H$ which coincides with  $\psi$ on $U'\times U\cap \widetilde{H}$ and denote it by $\psi_u$. For this lemma we also need to assume that $H$ is an algebraic subgroup of $G$. We also define an automorphism $\iota$ of $G\times G$ by $\iota(g_1,g_2)=(\tau(g_2^{-1}),\tau(g_1^{-1}))$ and denote $\widetilde{\psi}(h_1,h_2):=\psi(\tau(h_{1})h_{2}^{-1})$.
\begin{lem}\label{lem:tauinv}
Assume that every $\psi_u$-admissible double coset in $H'\backslash G/H$
is $\psi$-$\tau$-invariant. Then every distribution $T\in S^*(G)$ which is $\widetilde{H}$ invariant (in relation to the action defined above) is also $\tau$ invariant.
\end{lem}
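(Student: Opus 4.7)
The plan is the standard Gelfand--Kazhdan reduction to double cosets via Bernstein's localization principle for distributions on $l$-spaces. The anti-involution $\tau$ of $G$ and the involution $\iota$ on $\widetilde H$ satisfy $\tau((h_1,h_2).g)=\iota(h_1,h_2).\tau(g)$, so $\tau$ sends each $\widetilde H$-orbit on $G$ to another $\widetilde H$-orbit. Hence to show that a $\widetilde H$-invariant distribution $T\in S^*(G)$ is $\tau$-invariant, it suffices to check, one orbit at a time, that every $\widetilde H$-invariant distribution supported on a single double coset $[g]\in H'\backslash G/H$ is automatically $\tau$-invariant.

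To carry out this reduction I would invoke Bernstein's localization theorem: since $H$ is algebraic, the $\widetilde H$-orbits on $G$ are locally closed and give a filtration of $S^*(G)$ whose successive quotients are distribution spaces on individual orbits. On a single orbit $[g]$, Frobenius reciprocity for distributions identifies the space of $\widetilde H$-invariant distributions supported on $[g]$ with a one-dimensional space when the character $\widetilde\psi$, twisted by the modulus character coming from the $\widetilde H$-action, is trivial on the stabilizer $\widetilde H_g$, and with zero otherwise. The character $\psi_u$, which extends $\widetilde\psi|_{U'\times U\cap \widetilde H}$, is designed precisely so that $\psi_u$-admissibility of $[g]$ encodes this twisted triviality condition on $\widetilde H_g$. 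Consequently, if $[g]$ is not $\psi_u$-admissible, there is no nonzero $\widetilde H$-invariant distribution supported on $[g]$ and the desired $\tau$-invariance holds vacuously.

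If $[g]$ is $\psi_u$-admissible, then by hypothesis it is $\psi$-$\tau$-invariant, yielding $(h_1,h_2)\in\widetilde H$ with $(h_1,h_2).g=\tau(g)$ and $\widetilde\psi(h_1,h_2)=1$. In particular $\tau(g)\in[g]$, so $\tau$ preserves the orbit; combining this with the $\widetilde H$-equivariance relation satisfied by $T$ and the identity $\widetilde\psi(h_1,h_2)=1$ forces the values of $T$ and $\tau_*T$ on $[g]$ to coincide. Summing over all orbits gives $T=\tau_*T$. The main obstacle is the careful bookkeeping of the modulus character that appears in the Frobenius reciprocity step: one must verify that $\psi_u$-admissibility really captures the twisted triviality condition on $\widetilde H_g$, which is where the specific interplay between $\psi$ and the unipotent subgroup $U$ (together with the freedom in extending $\widetilde\psi$ from $U'\times U\cap\widetilde H$ to all of $\widetilde H$) is genuinely used.
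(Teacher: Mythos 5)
Your proposal follows the same overall strategy as the paper: Bernstein--Zelevinsky localization to single double cosets, Frobenius reciprocity identifying the (at most one-dimensional) space of $\widetilde\psi$-equivariant distributions supported on a given orbit, and $\psi_u$-admissibility as the existence criterion, obtained by noting that the obstruction character $\Delta_{\widetilde H}/\Delta_{\widetilde H_\gamma}$ is the absolute value of an algebraic character and hence trivial on $U'\times U$. That part of the outline is sound, and the "main obstacle" you flag (the modulus bookkeeping in the Frobenius step) is in fact dispatched quickly in the paper by exactly this triviality-on-unipotents observation.

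The genuine gap is in your final step, where you assert that $\psi$-$\tau$-invariance of $[g]$ "forces the values of $T$ and $\tau_*T$ on $[g]$ to coincide." What you can actually conclude at that point is that $T$ and $\tau_*T$ both lie in a one-dimensional space of $\widetilde\psi$-equivariant distributions supported on $[g]$ (using $\widetilde\psi\circ\iota=\widetilde\psi$), so $\tau_*T=\lambda T$ for some scalar $\lambda$, and since $\tau^2=\mathrm{id}$ we get $\lambda=\pm 1$. Nothing you have said rules out $\lambda=-1$, and that is precisely where the real work lies. The paper closes this by an explicit computation: by \cite[6.12]{BZ}, any nonzero equivariant distribution on the orbit is proportional to $T_0(f)=\int_{\widetilde H/\widetilde H_g} f(\tilde h.g)\,\widetilde\psi^{-1}(\tilde h)\,d\tilde h$; applying $\tau$ and changing variables $\tilde k=\iota(\tilde h)\tilde h_0$, where $\tilde h_0$ is the witness for $\psi$-$\tau$-invariance with $\widetilde\psi(\tilde h_0)=1$, produces a factor $c>0$ (the modulus of $\iota$), so that $\tau T_0=cT_0$ with $c>0$, forcing $\lambda=+1$. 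Equivalently the paper sets $T_1:=T-\tau(T)$, notes $\tau(T_1)=-T_1$, and the same computation yields $-T_0=cT_0$, hence $T_0=0$. This positivity-of-the-modulus argument, together with the identity $\widetilde\psi(\tilde h_0)=1$, is the step your sketch omits; without it, the desired $\tau$-invariance does not follow.
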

\begin{proof}
To get this result from \cite[Theorem 6.10]{BZ}, we have to show that:
\begin{enumerate}
    \item The action of $\widetilde{H}$ on $G$ is constructible (i.e. the graph of the action of $\widetilde H$ on $G$ is a finite union of locally closed subsets).
    \item For any $(h_1,h_2)\in \widetilde{H}$ there exist $(h'_1,h'_2)\in \widetilde{H}$ such that $(h_1,h_2).\tau(g)=\tau((h'_1,h'_2).g)$ for all $g \in G$.
    \item There exists $n_0\in\bb{N}$ and  $\tilde h\in \widetilde{H}$ such that $\tau^{n_0}$ induces the same action on $G$ as $\tilde h$.
    \item If $Y=H'\gamma H$ is an orbit and $T\in S^*(Y)$ is a non zero $\widetilde{H}$-invariant distribution on it, then $Y$ and $T$ are $\tau$ invariant.
\end{enumerate}
    We now verify the conditions. Firstly, (1) follows from the fact that the action is algebraic and \cite[Theorem A in 6.15]{BZ}. For (2) note that $\tilde h.\tau(g)=\tau(\iota(\tilde h).g)$ for any $\tilde h\in \widetilde{H},g\in G$. (3) is clear since $\tau^2=id$.
    
To show (4), note that $Y\isom \widetilde{H}/\widetilde{H}_\gamma$ (recall that $\widetilde{H}_\gamma$ is the stabilizer). 
We note that $\widetilde{\psi}\circ\iota(\tilde h)=\widetilde{\psi}(\tilde h)$ for any $\tilde h\in \widetilde{H},g\in G$.

Consider the space of Schwarz functions $S(Y)$ with the standard action. We have $ind_{\widetilde{H}_\gamma}^{\widetilde{H}}(1)\isom S(Y)$ by the definition of the compact induction functor $ind$. Therefore by Frobenius reciprocity:
\[
T\in \Hom_{\widetilde{H}} (S(Y), \widetilde{\psi})\isom \Hom_{\widetilde{H}_\gamma}(\Delta_{\widetilde{H}}/\Delta_{\widetilde{H}_\gamma},\widetilde{\psi})
\]
Since $T\neq 0$, we must have $\widetilde{\psi}=\Delta_{\widetilde{H}}/\Delta_{\widetilde{H}_\gamma}$ on ${\widetilde{H}_\gamma}$. Since algebraic characters are trivial on unipotent subgroup and modular characters of algebraic groups are a composition of an algebraic character and an absolute value, the character $\Delta_{\widetilde{H}}/\Delta_{\widetilde{H}_\gamma}$ is trivial on $U'\times U$. We deduce that $Y$ is $\psi_u$-admissible. By our assumption, there exists $\tilde h_0\in \widetilde H$ with $\tilde h_0.\gamma=\tau(\gamma)$
and $\widetilde{\psi}(\tilde h_0)=1$. Therefore, $Y$ is $\tau$-invariant. 

If $T$ is not $\tau$ invariant then $T_1:=T-\tau(T)$ is not zero and is $\widetilde{H}$-invariant as well. We note that $\tau(T_1)=-T_1$. Now, by \cite[6.12]{BZ} $T_1$ is proportional to the distribution
\[
T_0(f)=\int_{\widetilde{H}/\widetilde{H}_g}f(\tilde h.g)\widetilde{\psi}^{-1}(\tilde h)d\tilde h
\]
where the measure $d\tilde h$ is a left invariant Haar measure on $\widetilde{H}/\widetilde{H}_g$. Then:
\begin{align*}
-T_0(f)&=\tau T_0(f)=\int_{\widetilde{H}/\widetilde{H}_g}f(\tau(\tilde h.g)) \widetilde{\psi}^{-1}(\tilde h)d\tilde h
=\int_{\widetilde{H}/\widetilde{H}_g}f(\iota(\tilde h).\tau(g))\widetilde{\psi}^{-1}(\tilde h)d\tilde h\\
&=\int_{\widetilde{H}/\widetilde{H}_g}f(\iota(\tilde h)\tilde h_0.g)\widetilde{\psi}^{-1}(\tilde h)d\tilde h
=c\int_{\widetilde{H}/\widetilde{H}_g}f(\tilde k.g)\widetilde{\psi}^{-1}(\tilde k)\widetilde{\psi}(\tilde h_0)d\tilde k=cT_0(f)
\end{align*}
using a change variables $\tilde k=\iota(\tilde h) \tilde h_0$ and the fact that $\psi(\tilde h_0)=1$. The constant $c$ is the  odulus of the automorphism $\iota$, which is positive. Thus $T_0(f)=0$ for all $f\in S(G)$.

Therefore $T$ is $\tau$ invariant.
\end{proof}
Before we move to the proof, let us remark that we can replace the condition of $\psi$-admissbility in the finite case with $\psi_u$-admissbility, as the same argument still works.

\section{Proof of Theorem \ref{thm:main}}\label{sec:main}
Recall that we chose $G=GL_{n+m}$ and $$H=\left\{\left[\begin{array}{ccc}
g_1 & u & a\\
 & g_1 & b\\
 &  & g_2
\end{array}\right]\mid  g_1\in \GL_{n}, g_2\in \GL_{m-n}, a,b\in \Mat_{n,m-n}, u\in \Mat_{n}\right\}.$$ We choose an anti-involution:
\[
\tau(g)=\left(\begin{array}{cc}
w_{2n}\\
 & I_{m-n}
\end{array}\right)g^{t}\left(\begin{array}{cc}
w_{2n}\\
 & I_{m-n}
\end{array}\right)
\]
and consider the cosets $H^{'}\backslash G/ H$ where $H'=H_{n,m}':=\tau(H)<G$. Denote $\widetilde{H}:=H'\times H$. Recall that $\psi$ is a generic character of $H$. We define a character $\psi_u$ of $ H$ by setting $\psi_u=\psi$ on the unipotent subgroup corresponding to $P_{n,n,m-n}$ and $\psi_u|_{H_0}\equiv1$.

In the following section we will prove the following proposition.

\begin{prop}[Geometric statement]
\label{prop:adm-inv}Every $\psi_u$-admissible double coset in $H^{'}\backslash G/H$
is $\psi$-$\tau$-invariant. 
\end{prop}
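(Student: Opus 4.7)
My plan is to apply the Gelfand--Kazhdan criterion through Lemma \ref{lem:tauinv}, which has already reduced the multiplicity-one statement to this geometric assertion about $H'\backslash G/H$. Since the double coset space is unwieldy to analyze directly, I would work through a sequence of reductions whose final step is a concrete combinatorial problem on a small family of explicit representatives.

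First I would put each element of $G$ into a normal form. The subgroup $H = H_0 \ltimes U$ contains the entire unipotent radical $U$ of $P := P_{n,n,m-n}$, and $H'$ contains the conjugate unipotent subgroup $\tau(U)$. Starting from a Bruhat-type decomposition of $G$ relative to the Levi of $P$, I would absorb these unipotent radicals on either side into $H$ and $H'$, arriving at representatives indexed by a combinatorial parameter (a partial-permutation-like Weyl datum) together with $H_0$-orbit data describing the Levi-like remainder; this should be the content of the decomposition lemmas in Subsection \ref{sec:dec}. Admissibility is an algebraic condition on the stabilizer and orbit-type strata are constructible, so in Subsections \ref{sec:dense}--\ref{sec:red} I would further restrict to a ``generic'' dense subfamily of each Bruhat cell on which the stabilizers attain their minimal dimension. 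The remaining problem, presented in Subsection \ref{sec:genset}, is then to exhibit, for each generic admissible representative $g$, an explicit pair $(h_1,h_2) \in \widetilde H$ with $h_1 g h_2^{-1} = \tau(g)$ and $\psi(\tau(h_1)h_2^{-1}) = 1$.

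To produce such $(h_1,h_2)$ I would exploit the explicit form of $\tau(x) = W x^{t} W$ with $W = \operatorname{diag}(w_{2n},I_{m-n})$: this is essentially an anti-diagonal reflection, so a natural candidate pair is the reflected copy of the combinatorial data describing $g$. With the normal form from the previous step, the matrix identity $h_1 g h_2^{-1} = \tau(g)$ becomes a direct calculation once the appropriate reflection has been chosen, and on generic representatives there is enough freedom in $H_0$ and $H_0'$ to absorb any remaining discrepancy.

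The main obstacle, and the reason the geometric approach is subtle, will be the character identity $\psi(\tau(h_1)h_2^{-1}) = 1$. The character $\psi$ depends on the trace of the top-middle $n\times n$ unipotent block, and that trace is shuffled nontrivially both by $\tau$ and by the two-sided conjugation; moreover a candidate $(h_1,h_2)$ that satisfies the matrix equation is only determined up to the stabilizer $\widetilde{H}_g$. The $\psi_u$-admissibility hypothesis is precisely the input that pins down this ambiguity: it forces the relevant trace functional to vanish on $\widetilde{H}_g$, so that after a suitable modification the candidate pair can be arranged to lie in the kernel of $\widetilde\psi$. Making this cancellation uniform across all combinatorial types is exactly where the gap in Nien's earlier attempt lies (cf.\ Appendix \ref{app:nien}), and handling it correctly is the technical heart of the proof.
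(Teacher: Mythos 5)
Your proposal is a plausible high-level plan, but it is not a proof: it defers exactly the steps where the paper does real work, and at several points it misjudges what those steps need to accomplish.

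First, you describe restricting to a ``generic dense subfamily'' of each Bruhat cell and then verifying $\psi$-$\tau$-invariance there. But the geometric statement must be proven for \emph{every} $\psi_u$-admissible coset, not a dense subset. Denseness is irrelevant here because you are not passing to a limit of distributions -- Lemma \ref{lem:tauinv} requires the orbit-by-orbit statement for all cosets. The paper indeed splits the representatives $\gamma_{Y,Z,k_1,k_2,t,s}$ into two families (Subsection \ref{sec:dense} and the $Y_2 = Z_2 = 0$ case), but both families are handled completely; the ``dense'' family is just the one amenable to a rank-reduction induction (Lemma \ref{lem:ind-sym-G} together with Proposition \ref{prop:Y2}), while the degenerate one requires the auxiliary $*$-admissibility/$*$-invariance framework of Subsection \ref{sec:genset} and Lemma \ref{lem:Alternative-geometric-statement}. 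Nothing in your proposal addresses the non-generic stratum.

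Second, and more seriously, you acknowledge that the character identity $\psi(\tau(h_1)h_2^{-1})=1$ is ``the technical heart of the proof'' and that ``handling it correctly'' is what Nien's argument missed -- and then you do not handle it. Saying that ``$\psi_u$-admissibility pins down the ambiguity'' and that ``after a suitable modification the candidate pair can be arranged to lie in the kernel of $\widetilde\psi$'' is a restatement of the goal, not an argument. The paper's mechanism for this is concrete: admissibility forces vanishing of specific blocks (Lemma \ref{lem:col_t}, Corollary \ref{cor:van}) and the identity $Y_2 = Z_2$ (Lemma \ref{claim:Y2=Z2}); one then welds off an $I_k$ summand via Lemma \ref{lem:ind-sym-G}, reducing $n$; the residual case is translated to the free-group action on $X_{n,k}$ where the character is tracked through the explicit $\nu_A$ and $f_A$ and a second induction closes it. Your proposal produces no candidate $(h_1,h_2)$, no mechanism for tracking $\widetilde\psi$ through a reduction, and no induction. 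Without these, there is no proof -- only a description of the shape a proof might take.
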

From now on, whenever we say that a coset is admissible we mean that it is $\psi_u$-admissible unless otherwise specified. 

\begin{proof}[Proof of Theorem \ref{thm:main} assuming Proposition \ref{prop:adm-inv}]
The the conditions of Lemma \ref{lem:tauinv} are satisfied, so it only remains to verify that the anti-involution we chose satisfies the conditions in Theorem \ref{thm:GKcrit}. 
Choose $a=\diag(w_{2n}, I_{m-n})$
and $b=\diag(I_n,-I_n, I_{m-n}).$ 
A routine calculation shows that $\tau(g)=ag^ta^{-1}$ and in addition   $b\tau(h'^{-1})b^{-1}\in H$ and $\psi(b\tau(h'^{-1})b^{-1})=\psi\circ\tau(h')$ for all $g\in G, h'\in H'$.

Theorem \ref{thm:main} follows now from Lemma \ref{lem:tauinv} and Theorem  \ref{thm:GKcrit} (and Theorem \ref{thm:GKfinite} with the aforementioned altered Proposition \ref{prop:adm-inv}).
\end{proof}

The proof of the geometric statement consists of several steps. We divide the cosets in $H'\backslash G/H$ into two subsets. In Subsection \ref{sec:dense} we prove  directly for the cosets in the first subset that every admissible coset is $\psi$-$\tau$-invariant. The union of these cosets is dense in $G$. In order to solve the problem for the second subset, we define in Subsection \ref{sec:genset} a similar problem which implies the claim for the remaining cosets. 

\subsection{Deduction of Theorem \ref{thm:summer} from Theorem \ref{thm:main}}\label{sec:ded}
In this subsection we show how the uniqueness of uneven Shalika models implies Theorem \ref{thm:summer}. First, we note that $\Delta P_{n,m}$ can be embedded as $$H_1:=\left\{\diag(g_1,\left[\begin{matrix}
g_1 & u \\
 & g_2 
\end{matrix}\right])\right\}$$ in $G_{n+m}$. Let $P_{n,m}=L'U'$ be the Levi decomposition of the parabolic subgroup corresponding to $\{n,m\}$. Theorem \ref{thm:summer} states that $(L',H_1,\psi|_{H_1})$ is a Gelfand pair.

Assume to the contrary that $\dim \Hom_{L'}(\pi,\Ind_{H_1}^{L'}(\psi|_{H_1}))>1$ for some $\pi \in \Irr(L')$. Let $T_1,T_2$ be two linearly independent homomorphisms in $\Hom_{L'}(\pi,\Ind_{H_1}^{L'}(\psi|_{H_1}))$. Consider the map $T_1+T_2:\pi\oplus\pi\ra \Ind_{H_1}^{L'}(\psi|_{H_1})$ defined by $(v_1,v_2)\mt T_1(v_1)+T_2(v_2)$. 

Since the projections of the kernel to each of the coordinates are $L'$-invariant and $\pi$ is irreducible, they are either $0$ or $\pi$. We consider all the possible cases regarding the projections of $\ker (T_1+T_2)$. If both are $\pi$, we get an isomorphism that maps $v\in\pi$ to a $u\in\pi$ such that $T_1(v)+T_2(u)=0$. Using schur's lemma we deduce that $T_1$ and $T_2$ are linearly dependent, which contradicts our assumption. If exactly one of the projections is $\pi$ we get that one of the maps is zero, which is again a contradiction. Therefore both projections are 0 the map $T_1+T_2$ must be injective. We've shown that $\pi\oplus\pi$ is embedded in $\Ind_{H_1}^{L'}(\psi|_{H_1})$.

Tensoring with $\psi_u$ and applying the induction functor, we get that $\Ind_{L'U'}^G(\pi\otimes\psi_u)\oplus\Ind_{L'U'}^G(\pi\otimes\psi_u)$ is embedded in $\Ind_{L'U'}^G(\Ind_{H_1}^{L'}(\psi|_{H_1})\otimes\psi_u)=\Ind_{H}^G(\psi)$, showing that $\Ind_{H}^G(\psi)$ is not multiplicity free in contradiction to Theorem \ref{thm:main}.
\qed

\subsection{Decomposing representatives as a direct sum}\label{sec:dec}

In the reduction of Proposition \ref{prop:adm-inv} to the aforementioned similar problem, we
will need to decompose matrices and solve the problem separately for
each part. In order to do so, we define a notion of a direct sum of
matrices and give conditions regarding its compatibility with admissibility and $\psi$-$\tau$-invariance.

For any subset $S\subseteq[n]$ we will denote by $S_{i}$ the $i$-th
smallest element. Define an operator $$E_{S_{1},S_{2}}(A)=\summ{\mathclap{i\leq|S_{1}|,j\leq|S_{2}|}}A_{ij}E_{(S_1)_{i},(S_2)_{j}}$$
for $S_{1},S_{2}\subset[n]$ and $A\in \Mat_{|S_{1}|,|S_{2}|}$ (for clarification, $E_{(S_1)_{i},(S_2)_{j}}$ is the standard basis matrix with 1 at the row and column corresponding to the $i$-th smallest element in $S_1$ and the $j$-th smallest element in $S_2$, respectively).
\begin{lem}
\label{lem:Eprops}For any $S_{1},S_{2},S_{3}\subset[n]$ we can state multiplication rules for the operator $E$:
\[
E_{S_{1},S_{2}}(A)E_{S_{2},S_{3}}(B)=E_{S_{1},S_{3}}(AB)
\]
\[
E_{S_{1},S_{2}}(A)E_{S_{2}^{C},S_{3}}(B)=0.
\]In addition,
\[
E_{S_{1},S_{2}}(A)^{t}=E_{S_{2},S_{1}}(A^{t}).
\]

and if $A,B$ are invertible:
\[
(E_{S_{1},S_{1}}(A)+E_{S_{1}^{C},S_{1}^{C}}(B))^{-1}=E_{S_{1},S_{1}}(A^{-1})+E_{S_{1}^{C},S_{1}^{C}}(B^{-1})
\]
\end{lem}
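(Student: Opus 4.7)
The plan is to verify all four identities by direct entry-wise computation, since $E_{S_1,S_2}(A)$ is literally the matrix obtained by placing the entries of $A$ into the rows/columns indexed by $S_1\times S_2$ and zeros elsewhere. None of the steps requires a genuinely new idea — the main work is just bookkeeping of indices — so I would organize the proof around the underlying combinatorial observation and then read each identity off it.

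First I would record the key structural remark: the $(r,s)$-entry of $E_{S_1,S_2}(A)$ equals $A_{ij}$ if $r=(S_1)_i$ and $s=(S_2)_j$ for some $i\le|S_1|$, $j\le|S_2|$, and is zero otherwise. In particular, the support of $E_{S_1,S_2}(A)$ is contained in the rectangle $S_1\times S_2$. This single observation drives every part of the lemma.

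For the first multiplication rule, I would fix indices $(S_1)_i$ and $(S_3)_k$ and compute the $((S_1)_i,(S_3)_k)$ entry of the product by summing over all intermediate columns. The only nonzero contributions come from intermediate indices of the form $(S_2)_j$, and they give exactly $\sum_j A_{ij}B_{jk}=(AB)_{ik}$, which matches the claimed right-hand side; entries outside $S_1\times S_3$ vanish trivially. The second multiplication rule is immediate from the support observation: the columns of $E_{S_1,S_2}(A)$ meeting the support lie in $S_2$, while the nonzero rows of $E_{S_2^C,S_3}(B)$ lie in $S_2^C$, and $S_2\cap S_2^C=\emptyset$.

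The transpose identity follows by noting that transposition swaps rows and columns, so the $((S_2)_j,(S_1)_i)$-entry of $E_{S_1,S_2}(A)^t$ equals $A_{ij}=(A^t)_{ji}$, which is the definition of $E_{S_2,S_1}(A^t)$. Finally, for the inverse formula I would simply multiply the proposed inverse against $E_{S_1,S_1}(A)+E_{S_1^C,S_1^C}(B)$ and expand using the two multiplication rules: the ``diagonal'' products give $E_{S_1,S_1}(AA^{-1})+E_{S_1^C,S_1^C}(BB^{-1})=E_{S_1,S_1}(I)+E_{S_1^C,S_1^C}(I)=I_n$, while the ``off-diagonal'' cross terms vanish by the second multiplication rule. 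There is no real obstacle here; the only point requiring slight care is to be consistent about indexing $(S_1)_i$ versus $i$, so I would state the structural remark explicitly at the start to avoid any notational confusion.
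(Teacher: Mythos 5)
Your proposal is correct and takes the same route the paper indicates: the paper simply notes that the lemma follows from a direct computation based on $E_{ij}E_{k\ell}=\delta_{jk}E_{i\ell}$ and omits the details, which is precisely the entry-wise bookkeeping you carry out.
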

\begin{proof}
The proof is a simple calculation we omit. It is based on the
fact that $E_{ij}E_{k\l}=\d_{jk}E_{i\l}$.
\end{proof}
We now define direct sum of matrices by embedding the matrices in a larger matrix according to two index subsets.
\begin{defn}
For two subsets $S_{1},S_{2}\subset[N]$ and matrices $A\in \Mat_{|S_{1}|,|S_{2}|},B\in \Mat_{N-|S_{1}|,N-|S_{2}|}$,
we define their welding according to $S_1, S_2$ to be:
\[
A\oplus_{S_{1},S_{2}}B:=E_{S_{1},S_{2}}(A)+E_{S_{1}^{C},S_{2}^{C}}(B)
\]

and, for: $$h_{1}=(h_{1}^{1},h_{1}^{2})\in \Mat_{|S_{1}|}\times \Mat_{|S_{2}|}$$
 $$h_{2}=(h_{2}^{1},h_{2}^{2})\in \Mat_{N-|S_{1}|}\times \Mat_{N-|S_{2}|},$$
denote: \[h_{1}\oplus_{S_{1},S_{2}}h_{2}:=(h_{1}^{1}\oplus_{S_{1},S_{1}}h_{2}^{1},h_{1}^{2}\oplus_{S_{2},S_{2}}h_{2}^{2}).\] 
\end{defn}
\begin{rem}
Note that for any $S_{1},S_{2},S_{3}\subset[n]$ we have 
\begin{align*}
(A\oplus_{S_{1},S_{2}}B)(C\oplus_{S_{2},S_{3}}D) & =(AC\oplus_{S_{1},S_{3}}BD)\\
(h_{1}\oplus_{S_{1},S_{2}}h_{2})(k_{1}\oplus_{S_{1},S_{2}}k_{2}) & =(h_{1}^{1}k_{1}^{1}\oplus_{S_{1},S_{1}}h_{2}^{1}k_{2}^{1},h_{1}^{2}k_{1}^{2}\oplus_{S_{2},S_{2}}h_{2}^{2}k_{2}^{2})\\
 & =h_{1}k_{1}\oplus_{S_{1},S_{2}}h_{2}k_{2}
\end{align*}

for any $A,B,C,D$ matrices of appropriate dimensions and $h_{i},k_{i}$
tuples of matrices of dimensions as dictated by the definition of
$\oplus$.
\end{rem}
We first show that the action of welding together two elements of $\widetilde{H}_{n,k}$ is consistent
with the definitions of the characters $\psi$ and the involution $\tau$.
\begin{lem}
\label{lem:char-comp}Let $S\subset[n+m]$ and denote $S_{0}:=S\cap[n]$ and $\overline{S}=S\backslash[2n]-2n$. We assume $S_0=S\cap\{n+1,...,2n\}-n$.

If $h_{1}\in H_{|S_{0}|,|S_{0}|+|\overline{S}|}$, $h_{2}\in H_{n-|S_{0}|,m-|S_{0}|-|\overline{S}|}$
then $h:=h_{1}\oplus_{S,S}h_{2}$ is in $H_{n,m}$ and $\psi(h)=\psi(h_{1})\psi(h_{2})$.

It still holds if we replace $H$ with $H'$ (with the character $\psi\circ\tau$), and if we replace $\psi$ with $\psi_u$.
\end{lem}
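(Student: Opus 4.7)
The plan is direct verification using the matrix operator calculus of Lemma \ref{lem:Eprops}. I would begin by writing $h_1$ and $h_2$ in their respective Shalika forms, labeling their blocks $g_1^{(i)}, g_2^{(i)}, u^{(i)}, a^{(i)}, b^{(i)}$. The hypothesis $S_0 = S \cap \{n+1,\dots,2n\} - n$ guarantees that the three row-blocks (and column-blocks) of $h_1$ (of sizes $|S_0|, |S_0|, |\overline{S}|$) and of $h_2$ (of complementary sizes) land precisely inside the three row-blocks of a generic element of $H_{n,m}$ (of sizes $n, n, m-n$): the first two blocks of $h_1$ occupy positions $S_0 \subset [n]$ and $S_0 + n \subset \{n+1,\dots,2n\}$ respectively, which is the symmetry the hypothesis enforces. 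Applying the multiplication and sum rules of Lemma \ref{lem:Eprops}, each block of $h := h_1 \oplus_{S,S} h_2$ reads off as a welding of the corresponding blocks of $h_1$ and $h_2$; in particular the $(1,1)$ and $(2,2)$ blocks are both $g_1^{(1)} \oplus_{S_0,S_0} g_1^{(2)}$, the blocks below the block-diagonal vanish, and $h$ lies in $H_{n,m}$ with $u$-block $u^{(1)} \oplus_{S_0,S_0} u^{(2)}$.

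For the character, since $E_{S_0,S_0}$ embeds $\Mat_{|S_0|}$ as a coordinate subspace sending diagonal to diagonal, $\tr(u) = \tr(u^{(1)}) + \tr(u^{(2)})$, so $\psi(h) = \psi_0(\tr u) = \psi_0(\tr u^{(1)})\psi_0(\tr u^{(2)}) = \psi(h_1)\psi(h_2)$.

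The statement for $H'$ I would deduce by observing that $\tau$ is compatible with welding in the sense that if $\widetilde{S}$ denotes the image of $S$ under the permutation of $[n+m]$ induced by $\diag(w_{2n}, I_{m-n})$, then one has the intertwining $\tau(h_1 \oplus_{S,S} h_2) = \tau(h_1) \oplus_{\widetilde{S},\widetilde{S}} \tau(h_2)$ (with the matching relabelling on each factor), and $\widetilde{S}$ satisfies the hypothesis of the lemma whenever $S$ does. Applying what we just proved for $H$ to the $\tau$-preimages and composing with $\tau$ yields the claim for $H'$ and the character $\psi \circ \tau$, with the trace of the $u$-block invariant under the $w_n$-twisted transpose. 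The variant with $\psi_u$ is then immediate: $\psi_u$ agrees with $\psi$ on the unipotent radical and is trivial on $H_0$, and welding respects the Levi decomposition of $H$ block-by-block, so multiplicativity of $\psi_u$ reduces to the unipotent part (where it coincides with the $\psi$ computation) and is vacuous on the Levi factor.

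The only real obstacle is the index bookkeeping in the first step: one must confirm that the three row/column blocks of $h_1$ and $h_2$ reassemble into the three row/column blocks of $H_{n,m}$, and in particular that the defining Shalika identity between the $(1,1)$ and $(2,2)$ blocks is preserved under the welding. This is exactly why the symmetry assumption $S_0 = S \cap \{n+1,\dots,2n\} - n$ appears, and once the indexing is in place the verification via Lemma \ref{lem:Eprops} is a mechanical calculation.
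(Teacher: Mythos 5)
Your structural argument (that $h = h_1 \oplus_{S,S} h_2$ lands in $H_{n,m}$, by reading off the blocks via Lemma~\ref{lem:Eprops}) matches the paper and is fine. The $\tau$-intertwining route for the $H'$ claim is also a reasonable, though different, approach from the paper's one-line ``very similar''; you should check carefully that the relevant index set $\widetilde S$ indeed satisfies the hypotheses, but this works out.

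The gap is in the character computation. You write $\psi(h) = \psi_0(\tr u)$, but this formula only defines $\psi$ on the unipotent radical of $P_{n,n,m-n}$, i.e.\ when $g = I_n$, $k = I_{m-n}$. A general element
\[
h = \left[\begin{array}{ccc} g & u & a \\ & g & b \\ & & k \end{array}\right]
\]
factors as $\diag(g,g,k)$ times a unipotent element whose $(1,2)$ block is $g^{-1}u$, so the correct value is
\[
\psi(h) = \psi_1(\det g)\,\psi_2(\det k)\,\psi_0\bigl(\tr(g^{-1}u)\bigr),
\]
where $\psi_1,\psi_2$ are the characters on the Levi determined by $\psi|_{H_0}$. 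Your derivation omits the $g^{-1}$ twist and the Levi factor entirely. The paper proves the $\psi_u$ case first, where the Levi factor is trivial, and the key step there is precisely the identity $E_{S_0,S_0}(u_1)\,E_{S_0,S_0}(g_1^{-1}) + E_{S_0^C,S_0^C}(u_2)\,E_{S_0^C,S_0^C}(g_2^{-1}) = E_{S_0,S_0}(u_1 g_1^{-1}) + E_{S_0^C,S_0^C}(u_2 g_2^{-1})$ from Lemma~\ref{lem:Eprops}; for $\psi$ one then also notes $\det(g_1 \oplus_{S_0,S_0} g_2) = \det g_1 \det g_2$. Your bootstrap in the opposite direction (``$\psi_u$ reduces to the $\psi$ computation on the unipotent part'') is circular, since the $\psi$ computation you perform is itself only valid on the unipotent part.

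All of these are easily repaired, but as written the central claim $\psi(h) = \psi(h_1)\psi(h_2)$ is not established for a non-unipotent $h$.
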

\begin{proof}
We note that, writing for $i=1,2$:
\[
h_{i}=\left[\begin{array}{ccc}
g_{i} & u_{i} & a_{i}\\
 & g_{i} & b_{i}\\
 &  & k_{i}
\end{array}\right]
\]
then
\begin{align}
E_{S,S}(h_{1}) & =\left[\begin{array}{ccc}
E_{S_{0},S_{0}}(g_{1}) & E_{S_{0},S_{0}}(u_{1}) & E_{S_{0},\overline{S}}(a_{1})\\
 & E_{S_{0},S_{0}}(g_{1}) & E_{S_{0},\overline{S}}(b_{1})\\
 &  & E_{\overline{S},\overline{S}}(k_{1})
\end{array}\right].\label{eq:Eh}
\end{align}
and $E_{S^{C},S^{C}}(h_{2})$ has a similar form. Therefore $h$ has the appropriate block structure and it is easy to verify that $h$ satisfies the other requirements in the definition of $H$. Hence $h\in H_{n,m}$.
Moreover, 
\begin{align*}
\psi_u(h) & =\psi_u(E_{S,S}(h_{1})+E_{S^{C},S^{C}}(h_{2}))\\
 & =\psi_{0}\left(\tr((E_{S_{0},S_{0}}(u_{1})+E_{S_{0}^{C},S_{0}^{C}}(u_{2}))(E_{S_{0},S_{0}}(g_{1}^{-1})+E_{S_{0}^{C},S_{0}^{C}}(g_{2}^{-1})))\right)\\
 & =\psi_{0}(\tr(E_{S_{0},S_{0}}(u_{1}g_{1}^{-1})+E_{S_{0}^{C},S_{0}^{C}}(u_{2}g_{2}^{-1})))\\
 & =\psi(h_{1})\psi(h_{2}).
\end{align*}

Showing the property for $\psi$ is very similar, as is proving it for $H'$.
\end{proof}
\begin{lem}
\label{lem:inv-comp}Assume that $S_{2}\cap[2n]=2n+1-S_{1}\cap[2n]$
and $S_{1}\backslash[2n]=S_{2}\backslash[2n]$. Then $\tau(A\oplus_{S_{1},S_{2}}B)=\tau(A)\oplus_{S_{1},S_{2}}\tau(B)$. 
\end{lem}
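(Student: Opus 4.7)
The plan is to reduce the claim to pure index-juggling using the transposition rules from Lemma \ref{lem:Eprops}. Set $J := \diag(w_{2n}, I_{m-n})$ so that $\tau(X) = JX^t J$, and introduce the involution $\sigma$ on $[n+m]$ defined by $\sigma(i) = 2n+1-i$ for $i \le 2n$ and $\sigma(i) = i$ otherwise. The two hypotheses $S_{2}\cap[2n]=2n+1-S_{1}\cap[2n]$ and $S_{1}\setminus[2n]=S_{2}\setminus[2n]$ together say exactly that $\sigma(S_1) = S_2$ (and consequently $\sigma(S_1^C) = S_2^C$). In particular $|S_1| = |S_2|$.

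First I would apply the definition of $\tau$ and the transposition rule $E_{T_1,T_2}(M)^t = E_{T_2,T_1}(M^t)$ from Lemma \ref{lem:Eprops} to obtain
\[
\tau(A \oplus_{S_1, S_2} B) = J E_{S_2,S_1}(A^t) J + J E_{S_2^C, S_1^C}(B^t) J.
\]
Since $JE_{ij}J = E_{\sigma(i),\sigma(j)}$ on standard basis matrices, each summand rewrites by linearity as
\[
J E_{T, T'}(M) J \;=\; \sum_{i,j} M_{ij}\, E_{\sigma(T_i),\, \sigma(T'_j)}.
\]

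The next step is to rewrite the right-hand side as a single $E_{S_1,S_2}(\cdot)$ (respectively $E_{S_1^C,S_2^C}(\cdot)$). Put $p := |S_1 \cap [2n]|$ and $q := |S_1 \setminus [2n]|$. Because $\sigma$ reverses the order on $[2n]$ while fixing everything above $2n$, the permutation $\pi \in S_{p+q}$ defined by $\pi(k) = p+1-k$ for $k \le p$ and $\pi(k) = k$ for $k > p$ satisfies $\sigma((S_1)_k) = (S_2)_{\pi(k)}$ and $\sigma((S_2)_k) = (S_1)_{\pi(k)}$. Relabeling summation indices by $\pi$, the first term becomes $E_{S_1,S_2}(\tau(A))$, where on $\Mat_{p+q}$ one takes $\tau(A) := w'_p A^t w'_p$ with $w'_p := \diag(w_p, I_q)$, i.e.\ the formula for $\tau$ analogous to the one on $G$ but now with the reversal block of size $p$. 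Running the same argument with $S_1^C,S_2^C$ in place of $S_1,S_2$ yields the corresponding identity for the $B$-summand, and adding the two identities gives $\tau(A) \oplus_{S_1,S_2} \tau(B)$.

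The main obstacle will be the bookkeeping around the permutation $\pi$: one has to verify that the row-column rearrangement of $A^t$ induced by conjugation by $J$ coincides exactly with the analogously-defined anti-involution on the smaller matrix space $\Mat_{|S_1|}$, and this match is precisely what the hypotheses on $S_1$ and $S_2$ are engineered to guarantee. Once that correspondence is in place, everything else is the straightforward application of Lemma \ref{lem:Eprops}.
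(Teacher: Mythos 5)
Your proposal is correct and follows essentially the same route as the paper: apply $\tau$, use the transposition rule $E_{S_1,S_2}(M)^t = E_{S_2,S_1}(M^t)$ from Lemma~\ref{lem:Eprops}, observe that conjugation by $\diag(w_{2n},I_{m-n})$ sends $E_{S_2,S_1}(\cdot)$ back to $E_{S_1,S_2}(\cdot)$ at the cost of conjugating the inner matrix by the reversal on its first $|S_1\cap[2n]|$ indices (i.e.\ the analogously-defined $\tau$ on the smaller matrix space), and finish by linearity. The only difference is that you spell out the index permutation $\pi$ explicitly, which the paper leaves implicit; the substance is identical.
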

\begin{proof}
We note that for example:
\begin{align*}
\tau(E_{S_{1},S_{2}}(A))= & \diag(w_{2n},I_{m-n})(E_{S_{1},S_{2}}(A))^{t}\diag(w_{2n},I_{m-n})\\
= & \diag(w_{2n},I_{m-n})(E_{S_{2},S_{1}}(A^{t}))\diag(w_{2n},I_{m-n})\\
  =&E_{S_{1},S_{2}}(\diag(w_{|S_{2}\cap[2n]|},I_{m-n})A^{t}\diag(w_{|S_{2}\cap[2n]|},I_{m-n})\\=&E_{S_{1},S_{2}}(\tau(A)).
\end{align*}
and use the linearity of $\tau$.
\end{proof}
We are now ready to relate the admissibility and $\psi$-$\tau$-invariance of $A$,$B$ and $A\oplus_{S_{1},S_{2}}B$.
\begin{lem}
\label{lem:ind-sym-G}Assume that $S_{1}$ and $S_{2}$ satisfy the
conditions in Lemmas \ref{lem:char-comp} and \ref{lem:inv-comp}. Then:

(i) If $A\oplus_{S_{1},S_{2}}B$ is admissible then
$A$ and $B$ are admissible.

(ii) $A$ and $B$ are both $\psi$-$\tau$-invariant
then $A\oplus_{S_{1},S_{2}}B$ is $\psi$-$\tau$-invariant.
\end{lem}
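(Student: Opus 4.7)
The overall plan is to transfer the witnessing data from the smaller groups containing $A$ and $B$ to the full group containing $g := A\oplus_{S_1,S_2}B$, exploiting the multiplicativity of $\oplus$ (the remark after Lemma \ref{lem:Eprops}) together with its character and involution compatibilities (Lemmas \ref{lem:char-comp} and \ref{lem:inv-comp}). For (i), suppose $(k_1,k_2)$ stabilises $A$, i.e.\ $k_1Ak_2^{-1}=A$. I would lift to $\hat k_1 := k_1\oplus_{S_1,S_1}I$ and $\hat k_2 := k_2\oplus_{S_2,S_2}I$; by Lemma \ref{lem:char-comp} these lie in $H'$ and $H$ respectively. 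The multiplication rule for $\oplus$ gives $\hat k_1 g\hat k_2^{-1}=g$, so $(\hat k_1,\hat k_2)\in\widetilde H_g$, and the admissibility of $g$ yields $\psi_u(\tau(\hat k_1)\hat k_2^{-1})=1$. A direct calculation, in the spirit of the proof of Lemma \ref{lem:inv-comp} and using that the hypotheses force the involution $\pi:i\mapsto 2n+1-i$ on $[2n]$ (identity elsewhere) to send $S_1\mapsto S_2$ and $S_1^C\mapsto S_2^C$, shows $\tau(\hat k_1)=\tau(k_1)\oplus_{S_2,S_2}I$. Hence $\tau(\hat k_1)\hat k_2^{-1}=\tau(k_1)k_2^{-1}\oplus_{S_2,S_2}I$, and Lemma \ref{lem:char-comp} collapses the character to $\psi_u(\tau(k_1)k_2^{-1})=1$, proving $A$ admissible. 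The admissibility of $B$ follows by the symmetric argument, welding with the identity into the $S_1,S_2$ slots.

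For (ii), let $(k_1,k_2)$ and $(\ell_1,\ell_2)$ witness the $\psi$-$\tau$-invariance of $A$ and $B$ respectively, and set $h_1 := k_1\oplus_{S_1,S_1}\ell_1\in H'$, $h_2 := k_2\oplus_{S_2,S_2}\ell_2\in H$ (again by Lemma \ref{lem:char-comp}). The multiplication rule for $\oplus$ then gives
\[
h_1 g h_2^{-1} \;=\; k_1Ak_2^{-1}\oplus_{S_1,S_2}\ell_1 B\ell_2^{-1} \;=\; \tau(A)\oplus_{S_1,S_2}\tau(B) \;=\; \tau(g),
\]
the last equality by Lemma \ref{lem:inv-comp}. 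The same direct computation as in (i) yields $\tau(h_1)=\tau(k_1)\oplus_{S_2,S_2}\tau(\ell_1)$, so $\tau(h_1)h_2^{-1}=\tau(k_1)k_2^{-1}\oplus_{S_2,S_2}\tau(\ell_1)\ell_2^{-1}$. Lemma \ref{lem:char-comp} then factors the character as $\psi(\tau(k_1)k_2^{-1})\cdot\psi(\tau(\ell_1)\ell_2^{-1})=1\cdot 1=1$, establishing that $g$ is $\psi$-$\tau$-invariant.

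The one step that requires genuine care — and thus the main technical obstacle — is the computation of $\tau$ on weldings of the form $k\oplus_{S_1,S_1}(\cdot)$. The pair $(S_1,S_1)$ need not satisfy the symmetry condition in Lemma \ref{lem:inv-comp}, so that lemma cannot be invoked verbatim. The remedy is to replay its proof: the only essential input actually needed is the pair of identities $\pi(S_1)=S_2$ and $\pi(S_1^C)=S_2^C$, which both follow from the joint hypotheses on $(S_1,S_2)$ together with the fact that $\pi$ is a bijection on $[n+m]$. Once those two identities are in hand, everything else is routine bookkeeping with the multiplication and welding rules of $\oplus$.
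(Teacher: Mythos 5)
Your proof is correct and follows essentially the same route as the paper's: lift stabilising (resp.\ witnessing) pairs from the components to the welding $\oplus$, use the multiplication rules to check they stabilise (resp.\ conjugate to $\tau$ of) $A\oplus_{S_1,S_2}B$, and pull the character value back via Lemma~\ref{lem:char-comp}. The one step you flag as needing care — computing $\tau(k_1\oplus_{S_1,S_1}I)$ — is in fact already absorbed by the ``replace $H$ with $H'$'' clause of Lemma~\ref{lem:char-comp}, which gives multiplicativity of $\psi\circ\tau$ on weldings of $H'$-elements and lets one compute $\widetilde{\psi}$ of the lifted pair componentwise without explicitly identifying $\tau$ of the first component; so your extra computation, while correct, is not needed.
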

\begin{proof}
Denote $x:=A\oplus_{S_{1},S_{2}}B$, $\widetilde{\psi}(h_1,h_2)=\psi(\tau(h_{1})h_{2}^{-1})$ and the same for $\widetilde{\psi}_u$.

(i) Assume that $h_{1}\in \widetilde{H}_{A}$ and $h_{2}\in \widetilde{H}_{B}$. Using Lemmas
\ref{lem:char-comp} and \ref{lem:Eprops} we get that $h_{1}\oplus_{S_{1},S_{2}}(I,I), (I,I)\oplus_{S_{1},S_{2}}h_{2}\in \widetilde{H}_{x}$.
By the admissibility of $x$, $\widetilde{\psi}_u(h_{1}\oplus_{S_{1},S_{2}}(I,I))=\widetilde{\psi}_u(h_{1})$
and the same for $h_{2}$. Therefore, the $\psi_u$-admissibility of $x$ implies
the $\psi_u$-admissibility of $A$ and $B$. The proof for $\psi$-admissibility is identical.

(ii) Similarly, if $h_{1}.A=\tau(A)$ and $h_{2}.B=\tau(B)$ with
$\psi(h_{1})=\psi(h_{2})=1$ then $(h_{1}\oplus_{S_{1},S_{2}}h_{2}).x=\tau(x)$
and $\widetilde{\psi}(h_{1}\oplus_{S_{1},S_{2}}h_{2})=\widetilde{\psi}(h_{1})\widetilde{\psi}(h_{2})=1$.
Therefore, if $A$ and $B$ are both $\psi$-$\tau$-invariant then
$x$ is $\psi$-$\tau$-invariant.
\end{proof}
Practically, we are going to choose $A=I_{k}$ for some $k$ and use Lemma \ref{lem:ind-sym-G} to get that
we can assume $k=0$, or in other words, it is enough to show that if $B$ is admissible then it is $\psi$-$\tau$-invariant to get that if $x=E_{S_{1},S_{2}}(I_{k})+E_{S_{1}^{C},S_{2}^{C}}(B)$ is admissible then it is $\psi$-$\tau$-invariant,
under the conditions of the last Lemma.

Lastly, we show how for certain matrices the problem of showing that a coset in $G=\GL_{n+m}$ is admissible or $\psi$-$\tau$-invariant is equivalent to showing these properties for a matrix in $\Mat_{2n}$. 

Define an operator $C:\Mat_{n+m}\ra \Mat_{2n}$ which maps a matrix to its $2n\times 2n$ top left block, essentially cutting the matrix. We also extend the action of $\widetilde{H}$ on $\GL_{n+m}$ to an action on $\Mat_{n+m}$ and define admissibility and $\psi$-$\tau$-invariance in the same way as before.
\begin{lemma}\label{lem:redclas} Let $\frac n2\leq k\leq n$ and $t\leq \min \{m-n,k\}$.
The coset of the matrix of the form

\[
\eta_{A_0,B_0,A'_0,B'_0} = \left[\begin{array}{cc|cc|cc}
0_k &  &  & A_0 & & A'_0\\
 & I_{n-k} &  & \\
\hline  &  & I_{n-k} & \\
B_0 &  &  & 0_k\\
\hline  &  &  &  & I_{m-n-t}\\
 &  &  & B'_0 & &0_t 
\end{array}\right]\in \GL_{n+m}
\]
is admissible ($\psi$-$\tau$-invariant) in $\GL_{n+m}$ if and only if 
$C(\eta)$
is admissible ($\psi$-$\tau$-invariant) in $\Mat_{2n}$ with respect to the action of of $\widetilde{H}_{n,n}$.
\end{lemma}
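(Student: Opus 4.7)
The plan is to relate the stabilizer of $\eta$ under the $\widetilde H_{n,m}$-action to the stabilizer of $C(\eta)$ under the $\widetilde H_{n,n}$-action, via restriction to the top-left $2n\times 2n$ block. I would begin by writing a general pair $(h',h)\in\widetilde H_{n,m}$ in the $(n,n,m-n)$-block decomposition,
\[
h = \begin{pmatrix} g_1 & u & a \\ 0 & g_1 & b \\ 0 & 0 & g_2 \end{pmatrix},\qquad h' = \begin{pmatrix} g'_1 & u' & 0 \\ 0 & g'_1 & 0 \\ b' & a' & g'_2 \end{pmatrix},
\]
and expanding $h'\eta=\eta h$ (or $h'\eta=\tau(\eta)h$) block by block. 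Labeling the three main block-groups $A,B,C$ of sizes $n,n,m-n$, the four equations from the $(A,A),(A,B),(B,A),(B,B)$ blocks involve only the Shalika restrictions $\hat h=\begin{pmatrix} g_1 & u \\ 0 & g_1 \end{pmatrix}$ and $\hat h'=\begin{pmatrix} g'_1 & u' \\ 0 & g'_1 \end{pmatrix}$ together with the $2n\times 2n$ submatrix $C(\eta)$. These four equations are precisely the $\widetilde H_{n,n}$-stabilizer (resp. $\tau$-mapping) equations for $C(\eta)$ with respect to the Shalika involution on $\GL_{2n}$.

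The easy direction ``$C(\eta)$ admissible / $\psi$-$\tau$-invariant $\Rightarrow \eta$ admissible / $\psi$-$\tau$-invariant'' follows immediately: restriction sends any $(h',h)$ acting on $\eta$ to $(\hat h',\hat h)$ acting analogously on $C(\eta)$, and the character value $\widetilde\psi_u(h',h)$ agrees with $\widetilde\psi_u(\hat h',\hat h)$ since $\psi_u(h)=\psi_0(\tr(g_1^{-1}u))$ and, by a short direct calculation, $\psi_u(\tau(h'))=\psi_0(\tr((g'_1)^{-1}u'))$ depend only on the Shalika data.

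The opposite direction requires constructing an explicit extension of a Shalika-compatible pair $(\hat h',\hat h)$ to a full pair $(h',h)$ acting on $\eta$. The remaining five block equations from $(A,C),(B,C),(C,A),(C,B),(C,C)$ are linear in the extra data $a,b,a',b'$, with $g_2,g'_2$ as additional parameters. Using invertibility of $A_0,B_0$ (forced by $\eta\in\GL_{n+m}$), the strategy is: the $(B,C)$ and $(C,A)$ equations force the ``identity-direction'' entries of $a,b,a',b'$ to vanish; the $(A,C)$ and $(C,B)$ equations then express the surviving entries in terms of the Shalika data and the choice of $g_2,g'_2$; finally $(C,C)$ is a compatibility relation that can be met by a suitable choice of $g_2,g'_2$, exploiting the rank-$t$ structure of $A'_0,B'_0$. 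Since $\psi_u$ is insensitive to $a,b,g_2,a',b',g'_2$ (which lie in $H_0$ or in the unipotent directions on which $\psi_u$ vanishes), the character value of the extension equals that of $(\hat h',\hat h)$.

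The main technical obstacle will be the bookkeeping in verifying that this linear system always admits a solution. This reduces to checking that the $(C,C)$ compatibility relation can be met, which requires careful tracking of sub-blocks of sizes $k, n-k, m-n-t, t$ and uses the rank conditions on $A_0, B_0, A'_0, B'_0$ together with the dimensional assumptions $n/2\leq k\leq n$ and $t\leq\min\{m-n,k\}$, which ensure that the rank-$t$ data ``fits'' inside the $k\times k$ block governed by $A_0, B_0$. The $\psi$-$\tau$-invariance case proceeds identically once one computes $\tau(\eta)$ and observes that it has the same block-structured form as $\eta$ (with obvious roles of $A_0,A'_0$ and $B_0,B'_0$ after applying the anti-involution).
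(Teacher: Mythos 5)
Your overall plan — restrict for one direction, extend for the other, reading off five block equations beyond the four that define the $\widetilde H_{n,n}$-action on $C(\eta)$ — is broadly in the right spirit, but there are two issues.

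First, a genuine error in the direction bookkeeping. Restriction of a pair $(h',h)$ acting appropriately on $\eta$ to $(\hat h',\hat h)$ acting on $C(\eta)$ proves the implication $C(\eta)\Rightarrow\eta$ for the \emph{universally} quantified property (admissibility: every stabilizer of $\eta$ restricts, so if $C(\eta)$ is admissible so is $\eta$), but for the \emph{existentially} quantified property ($\psi$-$\tau$-invariance) it proves the opposite implication $\eta\Rightarrow C(\eta)$: from a witness $(h',h)$ with $h'\eta h^{-1}=\tau(\eta)$ you manufacture a witness for $C(\eta)$, not the other way around. The implication ``$C(\eta)$ $\psi$-$\tau$-invariant $\Rightarrow\eta$ $\psi$-$\tau$-invariant'' is exactly the non-obvious direction of the lemma and requires the extension argument, not restriction. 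Your extension argument, if it worked, would in fact prove this direction, but as written you have assigned it to the converse, so the ``easy direction'' claim for $\psi$-$\tau$-invariance is unjustified and the logical coverage of the four implications is scrambled.

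Second, your route differs from the paper's in a way that substantially affects feasibility. The paper does not attempt a direct extension for arbitrary $\eta_{A_0,B_0,A'_0,B'_0}$: it first uses the $\widetilde H_{n,m}$-action to replace $\eta$ by a representative in \emph{reduced form}, where $A_0=E_{S_1,S_2}(P)$, $A'_0=E_{S_1^C,[t]}(I_t)$, $B'_0=E_{[t],S_2^t}(I_t)$ are partial permutation matrices; this is legitimate because both properties are coset-invariants and $\eta_1\sim\eta_2$ forces $C(\eta_1)\sim C(\eta_2)$. After this normalization the extension of $\tilde h\oplus_{[2n],[2n]}(I_{m-n},I_{m-n})$ to an element of $\widetilde H_{n,m}$ carrying $\eta$ to $\tau(\eta)$ becomes a transparent correction. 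Your proposal skips this step and tries to solve the five linear block equations in $a,b,a',b',g_2,g'_2$ directly for a general $\eta$. That may well be doable — the rank relations forced by $\eta\in\GL_{n+m}$ (e.g.\ the joint rows of $A_0$ and $B'_0$ span, $B_0$ invertible, $A'_0,B'_0$ of rank $t$) do make the relevant systems solvable — but the $(C,C)$ compatibility is the crux of the whole lemma and your sketch stops precisely where the work begins, so nothing is actually verified. The reduced-form normalization is the device the paper uses to make this tractable, and omitting it leaves a real gap.
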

\begin{proof}
We prove the claim regarding $\psi$-$\tau$-invariance. The only if direction is obvious. 

The strategy of this proof is to reduce the claim to representatives with a relatively simple form. We define:
\begin{defn}
We say that $\eta_{A,B,A',B'}$ is in reduced form if there exist $S_{1},S_{2}\subseteq[k]$ of size $k-t$ and a permutation
matrix $P$ such that $A=E_{S_{1},S_{2}}(P)$, $A'=E_{S_{1}^{C},[t]}(I_{t})$ and $B'=E_{[t],S_{2}^{t}}(I_{t})$.
\end{defn}

We claim that the coset of $\eta_{A_{0},B_{0},A'_{0},B'_{0}}$ has a representative in reduced form. 
To see that, note that for any matrices $$a=\left[\begin{array}{c|c}
* & 0_{n-k,2k-n}\\
\hline * & *
\end{array}\right],b=\left[\begin{array}{c|c}
* & 0_{2k-n,n-k}\\
\hline * & *
\end{array}\right]\in \GL_{k}$$ we can find $B_{1},A'_{1},B'_{1}$ such that $\eta_{A_{0},B_{0},A'_{0},B'_{0}}\sim\eta_{aA_{0}b,B_{1},A'_{1},B'_{1}}$. We choose $a,b$ such that $aA_{0}b=E_{S_{1},S_{2}}(P)$
for some $S_{1},S_{2}\subseteq[k]$
and a permutation matrix $P$. Acting with an appropriate element
in $\widetilde{H}$ we get an element in reduced form.  Note that $|S_{i}|=k-t$ follows now from the fact that the matrix is invertible.

Since $\eta_{1}\sim\eta_{2}$ implies $C(\eta_{1})\sim C(\eta_{2})$ and the $\psi$-$\tau$-invariance property doesn't depend on the representative, it is enough to show the claim for matrices in reduced form.

Assume that $\eta$ is in reduced form and $\tilde h.C(\eta)=\tau(C(\eta))$ with $\widetilde\psi(\tilde h)=1$. 
Then $C(\tau(\eta))=C((\tilde h \oplus_{[2n],[2n]}(I_{m-n},I_{m-n})).\eta)$.
Using this property, one can find $\tilde{h}'\in\widetilde{H}_{n,m}$ which
coincides with $\tilde h \oplus_{[2n],[2n]}(I_{m-n},I_{m-n})$ on the top left $2n\times2n$ block
and satisfies $\tau(\eta)=\tilde{h}.\eta$. This matrix shows that
$\eta$ is $\psi$-$\tau$-invariant.

The proof of the part regarding admissibility is very similar.
\end{proof}

\subsection{Verifying Proposition \ref{prop:adm-inv} on a dense subset}\label{sec:dense}
In this subsection we choose a set of representatives of the cosets $H'\backslash G/H$ and then divide the representatives into two types. We prove the geometric statement (Proposition \ref{prop:adm-inv}) for representatives of the first type. We will deal with the second type in later subsections.

First, recall that $H=H_0U$ and $H_0\isom \GL_n\times \GL_{m-n}$. Therefore, there exist characters $\psi_1,\psi_2$ of $F^\times$ such that 
 \[\psi(\diag(g,g,h))=\psi_1(\det(g))\psi_2(\det(h))\]
 for any $g\in \GL_n$ and $h\in \GL_{m-n}$. 
We denote: \[d(A,B)=\left[\begin{array}{ccc}
A & B & 0\\
 0& A&0\\
 0& 0 & I
\end{array}\right]\] and $\Delta(A):=d(A,0)$ and for $A,B\in \Mat_{n}$.

Write $\s_{k_1,k_2,t,s}$ for 
\[
\left[\begin{array}{ccc|ccc|ccc}
 &  &  &  & w_{k_1} & \\
 & I_{n-t-k_1} &  &  &  & \\
 &  &  &  &  &  &  &  & I_{t}\\
\hline  &  &  & I_{n-k_2-s} &  & \\
w_{k_2} &  &  &  &  & \\
 &  &  &  &  &  &  & I_{s}\\
\hline  &  &  &  &  &  & I_{m-n-s-t}\\
 &  &  &  &  & I_{k_2-k_1+s}\\
 &  & I_{k_1-k_2+t} &  &  & 
\end{array}\right]
\]
where 
\begin{align*}
(k_1,k_2,t,s)\in\Omega:=\left\{(k_1,k_2,t,s) \mid 
\begin{array}{l}
    0\leq k_1,k_2\leq n;\,0\leq s,t;\,k_2\leq t+k_1\leq n;\\k_1\leq s+k_2\leq n;
\,s+t\leq m-n
  \end{array}
\right\}.
\end{align*}

\begin{lemma}
\label{claim:reps}Denote:
\[
\gamma_{Y,Z,k_1,k_2,t,s}  :=\diag(Y,I_{m})\s_{k_1,k_2,t,s}\diag(I_{n},Z,I_{m-n}).
\]
Then the following is a complete set of representatives
of $H^{'}\backslash G/H$:
\begin{align*}
\{ \gamma_{Y,Z,k_1,k_2,t,s}\mid Y,Z\in \GL_{n},(k_1,k_2,t,s)\in\Omega\}
\end{align*}
\end{lemma}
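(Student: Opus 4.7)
The plan is to combine a Bruhat-type decomposition with an explicit identity at the Levi level. First, I would verify that $P := P_{n,n,m-n}$ and $P' := \tau(P)$ are parabolic subgroups sharing the common Levi $L := \GL_n \times \GL_n \times \GL_{m-n}$: the explicit formula $\tau(g) = \diag(w_{2n}, I_{m-n})\, g^t \,\diag(w_{2n}, I_{m-n})$ gives $\tau(\diag(g, g, h)) = \diag(w_n g^t w_n, w_n g^t w_n, h^t)$, which preserves $L$ as a set and also preserves the subgroup $H_0 \subset L$. Writing $U$ for the unipotent radical of $P$ and $U' := \tau(U)$ for that of $P'$, one then has $H = H_0 U$ and $H' = H_0 U'$. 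By the standard Bruhat decomposition for pairs of parabolics sharing a Levi, $G = \bigsqcup_{[\sigma] \in W_L \backslash W / W_L} P' \sigma P$ with $W = S_{n+m}$ and $W_L = S_n \times S_n \times S_{m-n}$, and $W_L \backslash W / W_L$ is in bijection with the set of $3 \times 3$ non-negative integer matrices with row and column sums $(n, n, m-n)$, via $\sigma \mapsto M_\sigma$ where $(M_\sigma)_{ij}$ counts indices $k$ in the $i$-th row-block with $\sigma(k)$ in the $j$-th column-block. A direct calculation yields that $\sigma_{k_1, k_2, t, s}$ corresponds to
\[
M(k_1, k_2, t, s) = \begin{pmatrix} n-t-k_1 & k_1 & t \\ k_2 & n-k_2-s & s \\ k_1-k_2+t & k_2-k_1+s & m-n-s-t \end{pmatrix},
\]
and the inequalities defining $\Omega$ are exactly the conditions that all entries of $M$ be non-negative; this exhibits $\{\sigma_{k_1, k_2, t, s} : (k_1, k_2, t, s) \in \Omega\}$ as a complete set of representatives for $P' \backslash G / P$.

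To refine to $H' \backslash G / H$, I would take any $g \in P' \sigma P$ and decompose $g = u' l' \sigma l u$ via $P' = U' L$ and $P = L U$. Since $u' \in U' \subset H'$ and $u \in U \subset H$, it suffices to show $l' \sigma l \in H_0 \cdot \gamma_{Y, Z, k_1, k_2, t, s} \cdot H_0$ for some $Y, Z \in \GL_n$. Writing $l = (a, b, c)$ and $l' = (a', b', c')$ as tuples in $L$, set
\[
h_0 := (a, a, c), \quad h'_0 := (b', b', c'), \quad Y := (b')^{-1} a', \quad Z := b a^{-1};
\]
a Levi-componentwise computation immediately gives $h'_0 \cdot \diag(Y, I_m) = l'$ and $\diag(I_n, Z, I_{m-n}) \cdot h_0 = l$, so that
\[
l' \sigma l = h'_0 \cdot \diag(Y, I_m) \cdot \sigma \cdot \diag(I_n, Z, I_{m-n}) \cdot h_0 = h'_0 \, \gamma_{Y, Z, k_1, k_2, t, s} \, h_0.
\]
Therefore $g = (u' h'_0)\, \gamma_{Y, Z, k_1, k_2, t, s} \,(h_0 u) \in H' \,\gamma_{Y, Z, k_1, k_2, t, s}\, H$, completing the proof.

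The main obstacle will be the structural observation that $\tau(P)$ is a parabolic with Levi $L$: for $m > n$, $\tau(P)$ coincides with neither $P$ nor the standard opposite parabolic, so this requires a direct verification from the definition of $\tau$. The combinatorial computation identifying the $3 \times 3$ matrix $M(k_1, k_2, t, s)$ from the piecewise definition of $\sigma_{k_1, k_2, t, s}$ is also notation-heavy but routine. Note that the collection $\{\gamma_{Y, Z, k_1, k_2, t, s}\}$ need not be in bijection with $H' \backslash G / H$; redundancies among the $\gamma$'s for distinct $(Y, Z)$ within a fixed $\sigma$-stratum are permitted, as only coverage of every double coset is needed for the Gelfand--Kazhdan argument used in the sequel.
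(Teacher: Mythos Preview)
Your proposal is correct and follows essentially the same strategy as the paper: relative Bruhat decomposition for the two parabolics followed by the factorizations $L = H_0\cdot\{\diag(Y,I_m)\}$ and $L = \{\diag(I_n,Z,I_{m-n})\}\cdot H_0$ at the Levi level. The only organizational difference is that the paper conjugates $\tau(P)$ to the standard parabolic $Q_2=P_{m-n,n,n}$ before invoking Bruhat (so its Weyl quotient is $W_{Q_1}\backslash W/W_{Q_2}$), whereas you work directly with the non-standard parabolic $P'=\tau(P)$ after observing it shares the Levi $L$ with $P$; both lead to the same $3\times 3$ contingency-table parametrization by $\Omega$.
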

\begin{proof}
Let $Q_1,Q_2$ be the standard parabolic subgroups corresponding to the partitions $\{n,n,m-n\}$ and $\{m-n,n,n\}$. Denote by $W_{n+m}$ and $W_{Q_i}$ the Weyl group of $G_{n+m}$ and the one corresponding to $Q_i$, for $i=1,2$. By the relative Bruhat decomposition, there exists a bijection $Q_1\backslash G/Q_2\longleftrightarrow W_{Q_1}\backslash W_{n+m}/W_{Q_2}$.
The latter has the following complete set of representatives:
\begin{align*}
\{\left[\begin{array}{ccc|ccc|ccc}
 &  &  &  &  &  & I_{m-n-s-t}\\
 &  &  &  &  & I_{s+k_2-k_1}\\
 &  & I_{t+k_1-k_2} &  &  & \\
\hline   &  &  &  & w_{k_1} & \\
 & I_{n-t-k_1} &  &  &  & \\
 &  &  &  &  &  &  &  & I_{t}\\
\hline &  &  & I_{n-k_2-s} &  & \\
w_{k_2} &  &  &  &  & \\
 &  &  &  &  &  &  & I_{s}\\
\end{array}\right] \}
\end{align*}
with $\mid (k_1,k_2,t,s)\in\Omega$.
Using $Q_1=\{\diag(I_{n},Z,I_{m-n}) \mid Z\in \GL_{n}\}\cdot H$
and 
\[
\left(\begin{array}{ccc}
 &  & I_{n}\\
I_{n}\\
 & I_{m-n}
\end{array}\right)H'\cdot\{\diag(Y,I_{n},I_{m-n}) \mid Y\in \GL_{n}\}\left(\begin{array}{ccc}
 & I_{n}\\
 &  & I_{m-n}\\
I_{n}
\end{array}\right)=Q_2
\]
 we get the result.
\end{proof}
Fix a representative $\gamma:=\gamma_{Y,Z,k_1,k_2,t,s}$. Write $$Y=\left(\begin{array}{ccc}
Y_{1} & Y_{2} & Y_3\\
Y_{4} & Y_{5} & Y_6\\
Y_{7} & Y_{8} & Y_9
\end{array}\right), Z=\left(\begin{array}{ccc}
Z_{1} & Z_{2} & Z_3\\
Z_{4} & Z_{5} & Z_6\\
Z_{7} & Z_{8} & Z_9
\end{array}\right)$$ with $Y_{4}\in \Mat_{k_2},Z_{4}\in \Mat_{k_1, k_2},Z_{9}\in \Mat_{s+k_2-k_1,t+k_1-k_2}$ and $Y_{9}\in \Mat_{s, t+k_1-k_2}$.
The representative has the form:

\[
\gamma=\left[\begin{array}{ccc|ccc|ccc}
 & Y_{2} &  & C_1 & C_2 & C_3 &  &  & Y_{3}\\
 & Y_{5} &  & C_4 & C_5 & C_6 &  &  & Y_{6}\\
 & Y_{8} &  & C_7 & C_8 & C_9 &  &  & Y_{9}\\
\hline  &  &  & Z_{1} & Z_{2} & Z_{3}\\
w_{k_2} &  &  &  &  & \\
 &  &  &  &  &  &  & I_{s}\\
\hline  &  &  &  &  &  & I_{m-n-s-t}\\
 &  &  & Z_{7} & Z_{8} & Z_{9}\\
 &  & I_{t+k_1-k_2} &  &  & 
\end{array}\right]
\]
with some blocks $C_i$ of the appropriate dimensions.

We need two lemmas to rule out some non-admissible cosets.
\begin{lem}
\label{lem:col_t}
Assume that $[\gamma]$ is admissible.
\begin{enumerate}[(i)]
    \item Let $1\leq i\leq n+m$. If the first $n$ elements in $[\gamma]_{i,\ra}$ are zero then the elements in places $2n+1-t-k_1+k_2,...,2n$ in the row are zero as well.
\item
Similarly, Let $1\leq j\leq n+m$. If the column $[\gamma]_{\da,j}$ is zero in the indices $n+1,...,2n$ then it is zero in the indices $n-s+1,...,n$ as well.
\end{enumerate}
\end{lem}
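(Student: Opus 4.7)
The plan is to prove (i) by contradiction and deduce (ii) by applying (i) to $\tau(\gamma)$, using that admissibility is $\tau$-symmetric and that $\tau$ interchanges the role of rows and columns (mapping the column range in (i) to the row range in (ii)).

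For (i), suppose row $i$ of $\gamma$ vanishes on columns $[1,n]$ but $\gamma_{i,j_0}\neq 0$ for some $j_0\in[2n+1-t-k_1+k_2,2n]$. The goal is then to exhibit a stabilizer element $(h_1,h_2)\in\widetilde H_\gamma$ with $\widetilde\psi_u(h_1,h_2)\neq 1$, contradicting $\psi_u$-admissibility. The key structural input is that the columns $[2n+1-t-k_1+k_2,2n]$ are exactly the image of $Z$'s third column block (of width $t+k_1-k_2$) under $\diag(I_n,Z,I_{m-n})$, and that by the block structure of $\sigma_{k_1,k_2,t,s}$ these columns touch only a few row blocks of $\gamma$, which makes the relevant stabilizer computations tractable.

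Concretely, I would take $h_2=I+\epsilon E_{p,j_0}$ sitting in the $u$-block of $H$'s unipotent radical, with $p\in[1,n]$ chosen so that $\gamma h_2\gamma^{-1}$ lands in $H'$. The rank-one perturbation $\gamma E_{p,j_0}\gamma^{-1}$ is the outer product of column $p$ of $\gamma$ with row $j_0$ of $\gamma^{-1}$, and the zero-row hypothesis on $\gamma$ is precisely what forces the support of this outer product to lie in positions compatible with $H'$'s Lie algebra, so that $h_1:=\gamma h_2\gamma^{-1}\in H'$. A short computation then identifies $\widetilde\psi_u(h_1,h_2)$ with $\psi_0$ evaluated at a nonzero multiple of $\epsilon\gamma_{i,j_0}$, which is nontrivial for appropriate $\epsilon$, giving the required contradiction.

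The main obstacle will be the case analysis on the row block containing $i$. For $i\in[n+1,2n-k_2-s]$ the hypothesis is automatic, since these rows of $\gamma$ are structurally supported in $[n+1,2n]$; the stabilizer construction then directly forces $Z_3$ to vanish on the corresponding rows. For $i\in[1,n]$, writing out the top $n$ rows of $\gamma$ explicitly shows that the hypothesis is equivalent to the vanishing of the middle column block of $Y$'s row $i$, and the construction then yields the required constraint involving $Y$, $w_{k_1}$, and the block $Z_6$. The remaining rows either satisfy the hypothesis vacuously by inspection of $\gamma$'s block structure, or reduce to one of the two cases above after a routine modification.
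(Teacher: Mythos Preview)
Your construction for (i) does not do what you claim. With $h_2=I+\epsilon E_{p,j_0}$ and $h_1=\gamma h_2\gamma^{-1}$, the rank-one perturbation $\gamma E_{p,j_0}\gamma^{-1}$ is the outer product of column $p$ of $\gamma$ with row $j_0$ of $\gamma^{-1}$; neither factor has anything to do with row $i$ of $\gamma$. So the sentence ``the zero-row hypothesis on $\gamma$ is precisely what forces the support of this outer product to lie in positions compatible with $H'$'s Lie algebra'' is unsupported: the index $i$ never enters your stabilizer element. In fact, for $p\in[n-t-k_1+k_2+1,n]$ one has that column $p$ of $\gamma$ is the basis vector $e_{m+p}$ (coming from the $I_{t+k_1-k_2}$ block of $\sigma$), and this alone forces $h_1\in H'$, with no hypothesis on any row needed. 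Moreover the character then evaluates to $\psi_0(-\epsilon\,\delta_{j_0,p+n})$, which for $j_0=p+n$ is simply $\psi_0(-\epsilon)$ --- not a multiple of $\gamma_{i,j_0}$ as you assert. So your argument produces a stabilizer element, but the character value does not detect the entry you want, and the hypothesis on row $i$ is never used.

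The paper's construction is different and does use row $i$ explicitly. For each $1\le j\le t+k_1-k_2$ it sets $r=m+n-t-k_1+k_2+j$, $p=n-t-k_1+k_2+j$, takes $A=I+E_{r,i}\in H'$, and then $B=I-\sum_{\ell>n}\gamma_{i,\ell}E_{p,\ell}\in H$. Because column $p$ of $\gamma$ equals $e_r$, the relation $A\gamma B=\gamma$ reduces to the vanishing of row $i$ on the first $n$ columns --- this is exactly where the hypothesis enters --- and one reads off $\psi_u(B)=\psi_0(-\gamma_{i,\,p+n})$, which is the desired entry. The construction for (ii) is the mirror image, built directly from the $I_s$ block of $\sigma$ rather than deduced from (i) via $\tau$; your $\tau$-reduction idea is plausible in spirit, but since the argument for (i) uses the specific block structure of the representative $\gamma$, applying it to $\tau(\gamma)$ would require first identifying the corresponding structural feature, which amounts to redoing the construction anyway.
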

\begin{proof}
\begin{enumerate}[(i)]
    \item
For any $1\leq j\leq k_1-k_2+t$, write $A=I_{n+m}+E_{m+n-t-k_1+k_2+j,i}$ and $B=I_{n+m}-\sumud{\l=n+1}{n+m}\gamma_{i,\l}E_{n-t-k_1+k_2+j,\l}$.
Observe that $A\in H'$, $B\in H$ and $A\gamma B=\gamma$. From admissibility
we get that $\psi_u(A)=1=\psi_u(B)^{-1}=(\psi_{0}(-\gamma_{i,2n-t-k_1+k_2+j}))^{-1}$,
i.e. $\gamma_{i,2n-t-k_1+k_2+j}=0$.
\item Let $1\leq i\leq s$.
Write $B=I_{n+m}+E_{j,n+m-s-t+i}$ and 
\[
A=I_{n+m}-\sum_{\mathclap{\l\in[n]\cup 2n+[m-n]}}\gamma_{\l,j}E_{\l,2n-s+i}
.\]
Again $A\in H'$, $B\in H$ and $A\gamma B=\gamma$. Therefore $\psi_u(A)=1=\psi_u(B)^{-1}=(\psi_{0}(\gamma_{n-s+i,j}))^{-1}$
and $\gamma_{n-s+i,j}=0$.\qedhere
\end{enumerate}

\end{proof}
We have an immidiate corollary from the last lemma.
\begin{cor}\label{cor:van}
If  $[\gamma]$ is admissible then:
\begin{enumerate}[(i)]
    \item $Z_3$, $Z_9$, $Y_8$ and $Y_9$ are zero (unconditionally).
    \item $Y_2=0$ implies $C_3=0$.
    \item $Y_8=0$ implies $C_9=0$.
    \item $Z_2=0$ implies $C_8=0$.
\end{enumerate}
\end{cor}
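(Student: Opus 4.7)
The plan is to read off each vanishing as a direct application of Lemma \ref{lem:col_t}, after identifying which rows of $\gamma$ already have zero first $n$ entries and which columns already have zero entries in rows $n+1,\dots,2n$. All four statements will then drop out essentially by inspection of the displayed form of $\gamma$.

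For part (i), I locate two row strips of $\gamma$ whose first $n$ entries are all zero, namely the strip containing $[Z_1,Z_2,Z_3]$ and the one containing $[Z_7,Z_8,Z_9]$, and two column strips whose entries in rows $n+1,\dots,2n$ are all zero, namely the middle sub-column of the first big column block (where $Y_2,Y_5,Y_8$ live) and the last sub-column of the last big column block (where $Y_3,Y_6,Y_9$ live). Applying Lemma \ref{lem:col_t}(i) to the two row strips yields $Z_3=0$ and $Z_9=0$, while applying Lemma \ref{lem:col_t}(ii) to the two column strips yields $Y_8=0$ and $Y_9=0$.

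For parts (ii)--(iv), the extra hypothesis ($Y_2=0$, $Y_8=0$, or $Z_2=0$) kills the last remaining potentially nonzero entries in a specific row or column strip, making the relevant part of the lemma applicable. Concretely, under $Y_2=0$ the top sub-strip of the first $n$ rows of $\gamma$ has zero entries in all of the first $n$ columns, so Lemma \ref{lem:col_t}(i) forces $C_3=0$; under $Y_8=0$ the bottom sub-strip of the first $n$ rows likewise satisfies Lemma \ref{lem:col_t}(i), giving $C_9=0$; and under $Z_2=0$ the middle sub-column of the middle big column block satisfies Lemma \ref{lem:col_t}(ii), giving $C_8=0$.

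The only point that requires a small amount of bookkeeping is verifying that in each row or column strip used above the ``other'' blocks are already zero in $\gamma$. This is read off from the displayed form of $\s_{k_1,k_2,t,s}$ together with the fact that left-multiplication by $\diag(Y,I_m)$ affects only the first $n$ rows and right-multiplication by $\diag(I_n,Z,I_{m-n})$ affects only the middle $n$ columns, so surrounding zero blocks of $\s$ are preserved. I do not anticipate any genuine obstacle; the corollary is an unpacking of the lemma against the block diagram of $\gamma$.
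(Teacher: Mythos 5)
Your proof is correct and matches the paper's intent: the paper labels the corollary as immediate from Lemma~\ref{lem:col_t}, and your argument is exactly the required bookkeeping of which row strips have zero first $n$ entries (row sub-blocks carrying $[Z_1,Z_2,Z_3]$ and $[Z_7,Z_8,Z_9]$, and, under the extra hypotheses, the row sub-blocks carrying $Y_2$ resp.\ $Y_8$) and which column strips vanish in rows $n+1,\dots,2n$ (the sub-columns carrying $Y_2,Y_5,Y_8$ and $Y_3,Y_6,Y_9$, and, under $Z_2=0$, the one carrying $C_2,C_5,C_8$). Applying part~(i) of the lemma row by row and part~(ii) column by column gives exactly the stated vanishings, so this is the same proof the paper has in mind.
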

We rule out more cosets using the following lemma. This Lemma is adapted from \cite[Lemma 3.6]{Nien}. 
\begin{lemma}
\label{claim:Y2=Z2}Assume that $[\gamma]$ is admissible. Then
$Y_{2}=Z_{2}$. 
\end{lemma}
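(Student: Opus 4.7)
The plan is to construct, for every matrix $X \in \Mat_{n-t-k_1, n-k_2-s}$, an explicit pair $(h_1(X), h_2(X)) \in \widetilde{H}_\gamma$ whose associated character value equals $\psi_0(\tr(X(Y_2-Z_2)))$. Admissibility of $[\gamma]$ then forces this to equal $1$ for every $X$, and by the nondegeneracy of $\psi_0$ and of the trace pairing we conclude $Y_2 = Z_2$.

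Concretely, I would take $h_2(X) \in U \subset H$ and $h_1(X) \in U' \subset H'$ both of the form $I + F$, where $F$ is a unipotent perturbation supported only in the $(1,2)$ super-block of the $(n, n, m-n)$ block structure. For $h_2(X)$ I use $u'(X)$ whose only nonzero block is the middle row $(XZ_1, XZ_2, XZ_3)$, in the row/column partition $(k_2, n-t-k_1, t+k_1-k_2)$; for $h_1(X)$ I use $\tilde u(X)$ whose only nonzero block is the first column $(Y_2X, Y_5X, Y_8X)^T$, in the row/column partition $(n-k_2-s, k_2, s)$. Both perturbations square to zero, so $h_1(X), h_2(X)$ are honest group elements and the group-level stabilizer condition $h_1(X)\gamma = \gamma h_2(X)$ is equivalent to its linearization $F_1(X)\gamma = \gamma F_2(X)$.

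The linearization splits into nine super-block equations. Reading off $\gamma_{11}$ and $\gamma_{22}$ from the display of $\gamma$ (their only nonzero blocks are $Y_2, Y_5, Y_8$ in the middle column of $\gamma_{11}$, and $Z_1, Z_2, Z_3$ in the top row of $\gamma_{22}$), direct block multiplication gives $\tilde u(X)\gamma_{22} = \gamma_{11} u'(X)$, with both sides equal to the $3 \times 3$ array of blocks $Y_i X Z_j$ for $i \in \{2,5,8\}$ and $j \in \{1,2,3\}$. The remaining equations $\tilde u(X)\gamma_{21} = \tilde u(X)\gamma_{23} = \gamma_{21}u'(X) = \gamma_{31}u'(X) = 0$ follow from sparsity: the unique nonzero super-block of each of $\gamma_{21}, \gamma_{23}, \gamma_{31}$ sits at a position incompatible with the support of $\tilde u(X)$ or $u'(X)$.

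For the character, since $\tau^{*}(F_1(X))\, F_2(X) = 0$ by the same support considerations (where $\tau^{*}$ denotes the Lie-algebra incarnation of $\tau$), one gets the exact identity $\tau(h_1(X))h_2(X)^{-1} = I + \tau^{*}(F_1(X)) - F_2(X)$. The $(1,2)$-block of $\tau^{*}(F_1(X))$ is $w_n \tilde u(X)^T w_n$, whose trace matches $\tr\tilde u(X)$, so
$$\psi_u(\tau(h_1(X))h_2(X)^{-1}) = \psi_0(\tr\tilde u(X) - \tr u'(X)) = \psi_0(\tr(Y_2 X) - \tr(XZ_2))$$
(only the super-blocks $Y_2X$ and $XZ_2$ of $\tilde u(X)$ and $u'(X)$ contribute to the main diagonals). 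Cyclic symmetry collapses this to $\psi_0(\tr(X(Y_2-Z_2)))$, and the conclusion follows. The main obstacle is simply the block-level bookkeeping; once the support patterns of the $\gamma_{ij}$ super-blocks are read off from the display, every step reduces to a direct matrix computation.
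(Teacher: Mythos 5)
Your proposal is correct and takes essentially the same route as the paper: the paper's proof also produces the stabilizing pair $\bigl(d(I_n,Y\tilde X),\,d(I_n,\tilde X Z)\bigr)$ from a perturbation $\tilde X$ supported in the single block where $X$ sits, and your $\tilde u(X)=Y\tilde X$ and $u'(X)=\tilde X Z$ are exactly the paper's $(1,2)$ super-blocks, written out explicitly rather than via the factorization $\gamma=\diag(Y,I_m)\,\s_{k_1,k_2,t,s}\,\diag(I_n,Z,I_{m-n})$. The only difference is presentational: the paper verifies the stabilizer identity by commuting the unipotent $d(I_n,\tilde X)$ past $\s_{k_1,k_2,t,s}$, while you verify the equivalent block equations $\tilde u\,\gamma_{22}=\gamma_{11}u'$ and the vanishing of the cross terms by hand.
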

\begin{proof}
Let $$\tilde{X}:=\left(\begin{array}{ccc}
 & 0_{k_1,k_2}\\
X\\
 &  & 0_{t, s}
\end{array}\right)\in \Mat_{n}$$ for any $X\in \Mat_{n-t-k_1,n-k_2-s}$. We have $
d(I_{n}, Y\tilde{X})\gamma d(I_{n},-\tilde{X}Z)  =\gamma
$
and by admissibility $$\tr(Y\tilde{X})=\tr(Y_2{X})=\tr(\tilde{X}Z)=\tr({X}Z_2)$$
for all $X\in \Mat_{n-t-k_1,n-k_2-s}$. 
\end{proof}
We will finish the case of $Y_2=0$ in a later subsection. For now, we state the following proposition.

\begin{prop}\label{prop:Y2}
Assume $n-t-k_1,n-k_2-s>0$. If the coset of $\gamma$ is admissible and $Y_2=Z_2=0$, then it is $\psi$-$\tau$-invariant.
\end{prop}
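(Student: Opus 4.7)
The plan is to exploit the vanishings implied by admissibility together with the hypothesis $Y_2=Z_2=0$ in order to decompose $\gamma$ as a welding, and then handle the two welded pieces using the machinery from Subsection \ref{sec:dec}.

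First, I would combine Corollary \ref{cor:van} with the hypothesis $Y_2=Z_2=0$ to derive the complete list of vanishing blocks: $Y_2=Y_8=Y_9=0$, $Z_2=Z_3=Z_9=0$, and $C_3=C_8=C_9=0$. After rewriting $\gamma$ with these zeros in place, the key observation is that the rows and columns passing through the identity blocks $I_{t+k_1-k_2}$, $I_{k_1-k_2+t}$, $I_{k_2-k_1+s}$, $I_{s+k_2-k_1}$ and $I_{m-n-s-t}$ inside $\sigma_{k_1,k_2,t,s}$ become completely decoupled from the remaining indices: the only entries that could have joined them to the rest of $\gamma$ sat inside the now-vanished blocks.

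Next, I would choose subsets $S_1,S_2\subset[n+m]$ consisting precisely of the row and column indices supporting these decoupled identity blocks, matched with their $\tau$-mirrors. Because $\sigma_{k_1,k_2,t,s}$ was built symmetrically under $\tau$ (the paired blocks $I_{k_1-k_2+t}\leftrightarrow I_{t+k_1-k_2}$ and $I_{s+k_2-k_1}\leftrightarrow I_{k_2-k_1+s}$ sit in $\tau$-conjugate positions), one verifies that $S_2\cap[2n]=2n+1-S_1\cap[2n]$ and $S_1\setminus[2n]=S_2\setminus[2n]$, so Lemmas \ref{lem:char-comp} and \ref{lem:inv-comp} apply. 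This yields a welding $\gamma=\gamma_1\oplus_{S_1,S_2}\gamma_2$ in which $\gamma_2$ is a permutation matrix built entirely from identity blocks, while $\gamma_1$ collects the interesting data $Y_1,\ldots,Y_7,Z_1,Z_4,\ldots,Z_8,C_1,C_2,C_4,C_5,C_6,C_7$ together with $w_{k_1}$ and $w_{k_2}$ inside a subspace that fits into a $2n\times 2n$ corner.

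For $\gamma_2$, $\psi$-$\tau$-invariance is immediate: since $\gamma_2$ is a permutation of identity blocks that is $\tau$-symmetric by construction, one writes down an explicit element of $\widetilde H$ that sends $\gamma_2$ to $\tau(\gamma_2)$ with trivial character, so $\widetilde\psi$ evaluates to $1$. For $\gamma_1$, the strict inequalities $n-t-k_1,n-k_2-s>0$ place $\gamma_1$ in the form $\eta_{A_0,B_0,A_0',B_0'}$ of Lemma \ref{lem:redclas}, so its $\psi$-$\tau$-invariance in $\GL_{n+m}$ is equivalent to that of its $2n\times 2n$ top-left corner $C(\gamma_1)$ in $\Mat_{2n}$ under $\widetilde H_{n,n}$. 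This last problem is a classical Shalika statement for $\GL_{2n}$, and is settled by constructing $h_1,h_2$ explicitly out of the blocks $Y_5,Z_5,C_5$ (together with the permutations $w_{k_1},w_{k_2}$), using the invertibility of $Y_5$ and $Z_5$ forced by admissibility. Lemma \ref{lem:ind-sym-G}(ii) then reassembles these two pieces into $\psi$-$\tau$-invariance of $\gamma$.

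The main obstacle I expect is purely combinatorial: precisely identifying $S_1,S_2$ and checking block-by-block that the simplifications from Step~1 are exactly what is needed to kill every off-welding entry of $\gamma$. Once that bookkeeping is in place the welding decomposition is formal, and both summands fall to arguments that are either trivial (the identity piece) or already inside the classical Shalika case.
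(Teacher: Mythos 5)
Your decomposition strategy runs into two serious obstacles, and the second one is precisely where the real mathematical content of this proposition lives.

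\textbf{First}, the claim that the identity blocks $I_s$, $I_{t+k_1-k_2}$ "become completely decoupled" and can be welded off is not correct as stated. Even granting the vanishings from Corollary~\ref{cor:van}, a welding $\gamma_1 \oplus_{S_1,S_2} \gamma_2$ via Lemma~\ref{lem:ind-sym-G} requires $S_1,S_2$ to satisfy the structural conditions of Lemmas~\ref{lem:char-comp} and~\ref{lem:inv-comp}, in particular $S_1\cap[n] = (S_1\cap\{n+1,\dots,2n\}) - n$. The rows supporting $I_s$ lie in $\{2n-s+1,\dots,2n\}$, so the welding set is forced to also contain rows $\{n-s+1,\dots,n\}$ — and those rows carry the blocks $Y_3,Y_6,Y_9$ and $C_7,C_8,C_9$, which are not all zero. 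Likewise, the block that was $I_t$ in $\s_{k_1,k_2,t,s}$ does not survive as an identity block in $\gamma$: left-multiplication by $\diag(Y,I_m)$ smears it into $Y_3,Y_6,Y_9$, and the block $I_{k_2-k_1+s}$ gets smeared into $Z_7,Z_8,Z_9$ by the right factor. So the decoupled permutation piece you want to peel off does not exist (except for $I_{m-n-s-t}$, which really is isolated). The paper's Lemma~\ref{claim:Y2=0} resolves this not by welding these blocks away but by proving, from admissibility together with $Y_2=Z_2=0$, that $s = t+k_1-k_2 = 0$: the offending blocks are forced to be empty. Without some version of that argument your reduction cannot get off the ground.

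\textbf{Second}, and more importantly, after Lemma~\ref{lem:redclas} you land in $\Mat_{2n}$ with the action of $\widetilde H_{n,n}$ and declare this "a classical Shalika statement, settled by constructing $h_1,h_2$ explicitly out of $Y_5,Z_5,C_5$, using the invertibility of $Y_5$ and $Z_5$." This is the gap, not a step. The $m=n$ case has no known direct geometric proof prior to this paper; the existing direct attempt (Nien) has an error exactly here, which is what Appendix~\ref{app:nien} and the present paper are about. Moreover $Y_5$ and $Z_5$ are in general rectangular (e.g.\ $Y_5$ is $k_2\times(n-t-k_1)$), so "invertibility" is not even meaningful; admissibility only forces their columns (resp.\ rows) to be linearly independent, and one cannot simply read off an element of $\widetilde H$ sending $\gamma_1$ to $\tau(\gamma_1)$ with trivial character from these blocks. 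The paper handles this case by a genuinely new argument: the alternative geometric statement (Lemma~\ref{lem:Alternative-geometric-statement}) phrased via the set $X_{n,k}$, the group $T_{n,k}$, and the free group $F_{n,k}$, proved by its own induction in Subsection~\ref{sec:simprob}, together with the translation map $\nu_A$ that makes the free-group action track characters correctly. Your proposal does not engage with this at all; invoking "classical Shalika" is circular, since that is exactly the hardest instance of what must be proved.
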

We are now ready to prove Proposition \ref{prop:adm-inv} assuming Proposition \ref{prop:Y2}.
\begin{proof}[Proof of Proposition \ref{prop:adm-inv} assuming Proposition \ref{prop:Y2}]
We prove it by induction on $n$ (recall
that we assume $n\leq m$). 

\textbf{Induction base}: The case of $n=0$ is easy since $H^{0,m}=H'^{0,m}=\GL_{m}(F)$,
hence there is only one coset.

\textbf{Induction step}: Assume that $n\geq1$. Using the notation
from above, 
take a representative
$\gamma:=\gamma_{Y,Z,k_1,k_2,t,s}$. 

We first deal with the case of $n-t-k_1=0$. Use Lemma \ref{lem:col_t} to deduce
that columns $2n-t-k_1+k_2+1,...,2n$ must be zero. That contradicts $\gamma\in G$ unless $k_1+t-k_2=0$, which implies $k_2=n$ and $s=0$. By Lemma \ref{lem:redclas} in this case it is enough to show that the coset of the top left $2n\times 2n$ sub-matrix is $\psi$-$\tau$-invariant. This restriction has the form
\[
C(\gamma)=\left[\begin{array}{cc}
 & A\\
w_{n}
\end{array}\right].
\]
Using \cite{sim}, we choose a matrix $X\in \GL_n$ such that $Xw_nAX^{-1}=A^tw_n$ we get that $$\Delta(w_nXw_n)C(\gamma)\Delta(X^{-1})=C(\gamma),$$ showing that $C(\gamma)$ is $\psi$-$\tau$-invariant. Therefore by Lemma \ref{lem:redclas}, $\gamma$ is $\psi$-$\tau$-invariant well, solving the case of $n-t-k_1=0$.
Similarly, $n-k_2-s=0$ implies $n=k_1$ and is solved in the same fashion.

We return to the general case and assume $n-t-k_1,n-k_2-s>0$. We have solved the case of $Y_{2}\neq Z_{2}$ in Lemma \ref{claim:Y2=Z2} and postpone the treatment of the case of $Y_{2}=Z_{2}=0$ (see Proposition \ref{prop:Y2}). It remains to solve the case of $Y_{2}=Z_{2}$ and $\text{rank} Y_{2}>0$. By Corollary \ref{cor:van},
 $Y_{8}=0$ and $Z_{3}=0$. Choose matrices $A\in \GL_{n-k_2-s},B\in \GL_{n-t-k_1}$
such that \[AY_{2}B=\left[\begin{array}{cc}
0_{1, n-t-k_1-1} & 1\\
* & 0_{n-k_2-s-1,1}
\end{array}\right]=:Y_{2}'.\] Denote $\gamma'=\Delta(A,I_{k+s})\gamma\Delta(I_{k},B,I_{t})$,
$Y_{5}'=Y_{5}B$ and $Z_{1}'=AZ_{1}$. Choose matrices $C\in \Mat_{k,n-k-s},D\in \Mat_{n-k-t,k}$
such that last column of $Y_{5}'-CY_{2}'$ and the first row
of $Z_{1}-Y_{2}'D$ are zero. Multiplying $\gamma'$ 
by $\Delta(\left[\begin{array}{ccc}
I_{n-k_2-s}\\
-C & I_{k_2}\\
 &  & I_{s}
\end{array}\right])$ on the left and by $\Delta(\left[\begin{array}{ccc}
I_{k_2}\\
-D & I_{n-t-k_1}\\
 &  & I_{t+k_1-k_2}
\end{array}\right])$ on the right we get:

\[
\gamma\sim\left[\begin{array}{cccc|cccc|ccc}
 &  & 1 &  & * & * & * & * &  &  & *\\
 & * &  &  & * & * & * & * &  &  & *\\
 & * &  &  & * & * & * & * &  &  & *\\
 & * &  &  & * & * & * & * &  &  & *\\
\hline  &  &  &  &  &  & 1 & \\
 &  &  &  & * & * &  & *\\
w_{k_2} &  &  &  &  &  &  & \\
 &  &  &  &  &  &  &  &  & I_{s}\\
\hline  &  &  &  &  &  &  &  & I_{m-n-s-t}\\
 &  &  &  & * & * & * & *\\
 &  &  & I_{t-k_1-k_2} &  &  &  & 
\end{array}\right].
\]

Therefore:
\[
\gamma\sim\left[\begin{array}{ccc|ccc|ccc}
 &  & 1 &  &  & \\
M_{1} &  &  & M_{2} &   M_{3}& &  &  & M_{4}\\
\hline  &  &  &  &  &1 \\
M_{5} &  &  & M_{6} &   0&\\
 &  &  &  &  &  &  & I_{s}\\
\hline  &  &  &  &  &  & I_{m-n-s-t}\\
 &  &  & M_{7} &   M_{8}&\\
 & I_{t-k_1-k_2}  & &  &  & 
\end{array}\right].
\]
for some matrices $M_i$ of appropriate dimensions. Define:
\[
\hat{\gamma}:=\left[\begin{array}{cc|cc|ccc}
M_{1} &  & M_{2} & M_{3} &  &  & M_{4}\\
\hline M_{5} &  & M_{6} & 0\\
 &  &  &  &  & I_{s}\\
\hline  &  &  &  & I_{m-n-s-t}\\
 &  & M_{7} & M_{8}\\
 & I_{t} &  & 
\end{array}\right]\in G_{n+m-2}.
\]
By Lemma \ref{lem:ind-sym-G}
, $\hat \gamma$ is admissible. The induction hypothesis implies that $\hat \gamma$ is $\psi$-$\tau$-invariant, and using Lemma \ref{lem:ind-sym-G} again we deduce that $\gamma$ is $\psi$-$\tau$-invariant.\end{proof}
\subsection{An alternative geometric statement}\label{sec:genset}
We now describe a problem similar to Proposition \ref{prop:adm-inv}. In Subsection \ref{sec:red} we show by induction that it implies Proposition \ref{prop:Y2}. In Subsection \ref{sec:simprob} we prove the similar problem itself also by induction.

Let $n\in\bb N$ and choose $k\in\bb N$ with $2k\leq n$. For a matrix $A\in \Mat_{n}$ we denote:
\[
A=\left[\begin{array}{ccc}
A_{1} & A_{2} & A_{3}\\
A_{4} & A_{5} & A_{6}\\
A_{7} & A_{8} & A_{9}
\end{array}\right]
\]
with $A_{3},A_{5}\in \Mat_{k}$. Let $X:=X_{n,k}=\{x\in \Mat_{n} \mid x_{3}=0\}$.
Let $P_{1},P_{2}$ be the block lower triangular subgroups of $\GL_{n}$
corresponding to the partitions $\{k,k,n-2k\}$ and $\{n-2k,k,k\}$
respectively. We enumerate the $9$ blocks of elements in each of them
similarly. Define 
\[
T:=T_{n,k}=\{(a,b)\in P_{1}\times P_{2} \mid a_{1}=a_{5},b_{5}=b_{9},a_{9}=b_{1}\}.
\]
 We define an action of $T$ on $X_{n,k}$: $(a,b).x=axb^{-1}$ and choose an involution $\s(x)=w_{n}x^{t}w_{n}$.  

We choose below a generating set $T=<E_{i}>$. Using it, we denote $F_{n,k}:=FG(\{E_{i}\})$.
We define a map $\nu_{x}:F_{n,k}\ra\widetilde{H}_{n+k,n+k}$ which will
translate a sequence of actions of $T$ on $X$ to an action of $\widetilde{H}$
on $G$. For an $x\in X$ we define functions $f_{x},f_x^u:F_{n,k}\ra F$
by $f_{x}(w)=\psi(\nu_{x}(w))$ and $f^u_{x}(w)=\psi_u(\nu_{x}(w))$. These functions send an action on
$X$ to the value of the characters $\psi$ and $\psi_u$ on the corresponding action on $G$.

\begin{defn}
We say that an $x\in X$ is $*$-admissible if $x_{2}=x_{6}$ and for
any $w\in F_{n,k}$ satisfying $w.x=x$ we have $f_{x}^u(w)=1$. We
say that $x$ is $*$-invariant if there exist $w\in F_{n,k}$ such
that $\sigma(x)=w.x$ and $f_{x}(w)=1$.
\end{defn}
Using these definitions, we state an new geometric statement.
\begin{lem}
\label{lem:Alternative-geometric-statement}(Alternative geometric
statement) If $x$ is $*$-admissible then it is $*$-invariant.
\end{lem}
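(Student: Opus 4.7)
The plan is to establish this by an induction (most naturally on $n$, or jointly on $(n,k)$) combined with a case analysis on orbit representatives of $T$ on $X$, in close analogy with the proof of Proposition \ref{prop:adm-inv}. The base case $k=0$ is essentially empty since then the condition $x_2=x_6$ is vacuous in the relevant sense and the action becomes a pair of full $\GL$-actions.

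The first step is to find a good set of orbit representatives for $T$ acting on $X$. Since $P_1, P_2$ are block lower triangular groups for the partitions $\{k,k,n-2k\}$ and $\{n-2k,k,k\}$ respectively, and $T$ sits inside $P_1 \times P_2$ subject to the coupling $a_1=a_5$, $b_5=b_9$, $a_9=b_1$, I expect a Bruhat-type decomposition to yield representatives whose block shape is a partial permutation matrix analogous to $\sigma_{k_1,k_2,t,s}$ of Lemma \ref{claim:reps}, together with residual data in certain blocks. Without the coupling constraints one gets the usual $P_1 \times P_2$ double coset picture on $\Mat_n$; the coupling only refines the parameter space further.

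The second step is to use the $*$-admissibility condition to eliminate most representatives. For each candidate representative I would exhibit explicit stabilizer words $w \in F_{n,k}$; the requirement $f_x^u(w)=1$ then reads as $\psi_0$ of some linear form in the entries of $x$ being trivial, and for a generic additive character this forces the corresponding entries to vanish. These arguments should play the role of Lemma \ref{lem:col_t}, Corollary \ref{cor:van}, and Lemma \ref{claim:Y2=Z2} in the main proof, while the built-in equality $x_2=x_6$ already enforces the analog of the latter. This should cut down the remaining representatives drastically.

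For the surviving representatives I would try to produce a word realizing $\sigma(x)=w.x$ with $f_x(w)=1$. When the surviving $x$ decomposes under a welding $\oplus_{S_1,S_2}$ compatible with both $T$ and $\sigma$ (an analog of Lemma \ref{lem:ind-sym-G} transferred to this setting), one reduces to strictly smaller $X_{n',k'}$ and concludes by induction. In the non-decomposable cases the representative should be rigid enough — essentially a permutation-type block matrix — that the required conjugator can be written down explicitly, using an adaptation of the $Xw_nAX^{-1}=A^t w_n$ trick from \cite{sim} already invoked in the proof of Proposition \ref{prop:adm-inv}. The principal obstacle will be the bookkeeping around $\nu_x$: because $\nu_x$ depends on the base point, verifying that a word stabilizing $x$ has trivial $f_x^u$-value requires tracking how successive generator actions lift to $\widetilde{H}_{n+k,n+k}$, and in particular checking that generator relations in $T$ are compatible with the chosen lift. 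Once this bookkeeping is handled, the case analysis itself should be purely matrix-theoretic.
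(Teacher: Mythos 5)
Your overall outline tracks the paper's strategy reasonably well at the level of slogans --- induction, explicit stabilizer words forcing entries to vanish via $\psi_0$-triviality, a welding lemma (\lemref{lem:ind-sym-T}, the analogue you correctly anticipate of \lemref{lem:ind-sym-G}), and the Taussky--Zassenhaus similarity-to-transpose result from \cite{sim}. But there are two places where the proposal either misstates or omits the load-bearing content.

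First, the base case $k=0$ is not ``essentially empty.'' When $k=0$, $X_{n,0}=\Mat_n$ and $T_{n,0}$ is the diagonal $\{(Y,Y):Y\in\GL_n\}$, so the condition $x_2=x_6$ is indeed vacuous, but the assertion to prove is genuine: $\s(A)=w_n A^t w_n$ must lie in the $T_{n,0}$-orbit of $A$ via an element with trivial character. This is exactly where the \cite{sim} result ($YAY^{-1}=w_nA^tw_n$, i.e.\ every matrix is similar to its transpose by an explicit conjugator) is used. You place the \cite{sim} trick in the ``non-decomposable'' part of the inductive step rather than the base case, which is a misreading of where the real work sits.

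Second, and more importantly, your inductive step is missing the mechanism that actually closes the induction. You correctly anticipate that welding reduces $A$ by stripping off identity blocks, but after the welding has removed everything it can (the parameter $d$ reduced to $0$), what remains is still a full $X_{n,k}$-type matrix with nontrivial interior data $C\in X_{n-2k,k}$ sitting in the middle $(n-2k)\times(n-2k)$ block, and the outer $k$ rows and columns pinned to identities. This is \emph{not} a welded direct sum in the sense of \lemref{lem:ind-sym-T}. The paper instead constructs a group-theoretic transfer map $\phi_C: F_{n-2k,k}\to F_{n,k}$ satisfying $\rho_{w.C}=\phi_C(w)\rho_C$, $f_{\rho_C}(\phi_C(w))=f_C(w)$, and $f^u_{\rho_C}(\phi_C(w))=f^u_C(w)$, where $\rho_C$ is the ``inflation'' of $C$ to $X_{n,k}$. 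This lets one push $*$-admissibility of $\rho_C$ down to $*$-admissibility of $C$ and lift $*$-invariance of $C$ back up to $*$-invariance of $\rho_C$, so the induction runs on $n$ (decreasing by $2k$ each time) with $k$ fixed. Without this map --- or some concretely stated substitute --- ``one reduces to strictly smaller $X_{n',k'}$ and concludes by induction'' is an assertion, not an argument, and it is precisely the part that is nontrivial: the generators $E_2,E_3$ of $T_{n-2k,k}$ act on the middle block in a way that must be matched by carefully chosen products of generators of $T_{n,k}$, including $E_1$-corrections, in order to keep the outer identity blocks intact and the character bookkeeping consistent with $\nu_x$. Your last paragraph gestures at this bookkeeping concern but does not propose how to resolve it.
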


In the next subsection we show that Lemma \ref{lem:Alternative-geometric-statement}
implies Proposition \ref{prop:Y2} and Subsection \ref{sec:simprob} it we prove Lemma
\ref{lem:Alternative-geometric-statement}. 
In the reminder of this subsection, we define the map $\nu_x$ explicitly and prove a few properties of $\nu_x$, $f_x$ and $f_x^u$.

We define for $X,Z\in \GL_{k}$ , $Y\in \GL_{n-2k}$, $a\in \Mat_{k},b,c^{t}\in \Mat_{n-2k,k}$:
\begin{align*}
E_{1}(X,Y,Z) & =(\diag(X,X,Y),\diag(Y,Z,Z))\\
E_{2}(a,b) & =\left[\begin{array}{ccc}
I\\
a & I\\
b &  & I
\end{array}\right],E_{3}(b)=\left[\begin{array}{ccc}
I\\
 & I\\
 & b & I
\end{array}\right]
\end{align*}
and identify $E_{i}$ with $(E_{i},I_{n})\in T$ for $i=2,3$. Similarly,
we define $E_{4}(c)=\s(E_{3}(\s(c)))$ and $E_{5}(a,c)=\s(E_{2}(\s(a),\s(c)))$
and embed them in the second coordinate of $T$. Clearly,
\begin{align*}
T= & <E_{1}(X,Y,Z),E_{2}(a,b),E{}_{3}(b),E_{4}(c),E_{5}(a,c) \mid \\
 & X,Z\in \GL_{k},Y\in \GL_{n-2k},a\in \Mat_{k},b,c^{t}\in \Mat_{n-2k,k}>.
\end{align*}

We define a map $\nu_{A}:F_{n,k}\ra\widetilde{H}_{n+k,n+k}$ by setting:
\[
\nu_{A}(E_{1}(X,Y,Z))=(\Delta(\diag(X,X,Y,Z)),\Delta(\diag(X,Y,Z,Z)))
\]
\[
\nu_{A}(E_{2}(a,b))=\left(\Delta\left(\begin{array}{cccc}
I\\
a & I\\
b &  & I\\
 &  &  & I
\end{array}\right),d\left(I_{n},\left[\begin{array}{cc}
a\\
b\\
 & 0_{k,n-k}
\end{array}\right]\right)\right)
\]
and finally, writing:
\[
\alpha_{A,b}:=\left[\begin{array}{cc}
A_{1}b\\
A_{4}b\\
A_{7}b+bA_{4}b\\
 & 0_{k,n-k}
\end{array}\right]
,\]
we define:
\[
\nu_{A}(E_{3}(b))=\left(d\left(\left[\begin{array}{cccc}
I\\
 & I\\
 & b & I\\
 &  &  & I
\end{array}\right],\alpha_{A,b}\right),\Delta\left(\begin{array}{ccc}
I\\
b & I\\
 &  & I_{2k}\\

\end{array}\right)\right).
\]
To explain why we have defined the map in this way, we consider the map $$A\mt \eta_A:=\left[\begin{array}{cc|cc}
 &  &  & A\\
 & I_{k}\\
\hline  &  & I_{k}\\
I_{n} & 
\end{array}\right].$$
We note that in each case $\eta_{E_{i}A}=\nu_{A}(E_{i})\eta_{A}$.
One can extend the definition to $E_{4}$ and $E_{5}$ by applying
$\s$ on both sides of $\eta_{E_{i}A}=\nu_{A}(E_{i})\eta_{A}$ for
$i=2,3$ to get elements of $\widetilde{H}$ which satisfy this identity
for $i=4,5$. We extend the definition to $F_{n,k}$ by $\nu_{x}(w_{1}w_{2})=\nu_{w_{2}x}(w_{1})\nu_{x}(w_{2})$ and deduce that:
\begin{lemma}
\label{lem:nueta}
 For any $w\in F_{n,k}$ holds $\eta_{w.A}=\nu_{A}(w)\eta_{A}$.
\end{lemma}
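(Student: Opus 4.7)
The plan is to proceed by induction on the length of the word $w \in F_{n,k}$ expressed in terms of the generators $E_1(X,Y,Z), E_2(a,b), E_3(b), E_4(c), E_5(a,c)$.

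\textbf{Base case (single generator).} For each $i$, I need to check directly that $\eta_{E_i.A} = \nu_A(E_i)\eta_A$. For $E_1$ and $E_2$ this is an unobstructed block-matrix multiplication: the entries of $\nu_A(E_i)$ were defined precisely to produce the appropriate left/right multiplications of the ``$A$-block'' inside $\eta_A$, together with the correct embedding into $\widetilde H$. The $E_3(b)$ case is the key one, and is where the mysterious correction term $\alpha_{A,b}$ enters: acting on $\eta_A$ by the element of $H'$ in the first coordinate of $\nu_A(E_3(b))$ alters rows within the top-left $n\times n$ block, which produces extra contributions in the top-right ``$A$'' slot, and one checks that these contributions are exactly $\alpha_{A,b}$, so that after also applying the right multiplication (the element of $H$ in the second coordinate) the $A$-block has been replaced by $E_3(b).A$. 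The $E_4$ and $E_5$ cases follow by applying the anti-involution $\sigma$ to the already-verified identities for $E_3$ and $E_2$ (as noted in the text preceding the lemma), using $\sigma(\eta_A)=\eta_{\sigma(A)}$ up to the obvious identifications.

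\textbf{Inductive step.} Suppose the identity holds for $w_1, w_2 \in F_{n,k}$. Then, using the defining rule $\nu_x(w_1 w_2) = \nu_{w_2.x}(w_1)\,\nu_x(w_2)$, we compute
\[
\eta_{(w_1 w_2).A} \;=\; \eta_{w_1.(w_2.A)} \;=\; \nu_{w_2.A}(w_1)\,\eta_{w_2.A} \;=\; \nu_{w_2.A}(w_1)\,\nu_A(w_2)\,\eta_A \;=\; \nu_A(w_1 w_2)\,\eta_A,
\]
applying the induction hypothesis twice (first to $w_1$ at the point $w_2.A$, then to $w_2$ at $A$). This extends the identity to all elements of $F_{n,k}$.

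\textbf{Main obstacle.} The main calculation is the $E_3(b)$ case of the base step: one must show that the ad-hoc-looking matrix $\alpha_{A,b}$ is exactly the correction forced by conjugating the lower-triangular element of $H'$ past $\eta_A$. This is essentially a bookkeeping verification, but it is the place where the otherwise-symmetric structure of the generators becomes asymmetric, and it is also what guarantees that $\nu_A(E_3(b))$ actually lies in $\widetilde H$ rather than merely in $G\times G$. The remaining steps are routine matrix algebra together with the formal cocycle extension.
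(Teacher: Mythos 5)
Your proof takes essentially the same route as the paper: verify the identity on the generators $E_1,E_2,E_3$ directly (the $E_3(b)$ case being the only one requiring the correction $\alpha_{A,b}$), obtain $E_4,E_5$ by applying $\sigma$ to the $E_3,E_2$ identities, and then propagate to arbitrary words via the cocycle relation $\nu_{x}(w_{1}w_{2})=\nu_{w_{2}x}(w_{1})\nu_{x}(w_{2})$, which extends uniquely from generators to the free group $F_{n,k}$. The paper states the lemma as an immediate consequence of those definitions without writing out the induction, so your write-up simply makes the implicit argument explicit.
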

We will use this property in the next subsection to relate $*$-admissibility and $*$-invariance of $A$ to admissibility and $\psi$-$\tau$-invariance of $\eta_A$.

We note that:
\begin{align*}
f_{x}: & E_{1}(X,Y,Z)\mt\psi_{1}(\frac{\det(X)}{\det(Z)}),E_{2}(a,b)\mt\psi_{0}(\tr(-a)),\\
 & E_{3}(b)\mt\psi_{0}(\tr(x_{1}b)),E_{4}(c)\mt\psi_{0}(\tr(-cx_{9})),E_{5}(a,c)\mt\psi_{0}(\tr(a))
\end{align*}
and that $f_{x}(w_{1}w_{2})=f_{w_{2}x}(w_{1})f_{x}(w_{2})$ for any
$w_{i}\in F_{n,k}$. The same holds for $f_x^u$ apart from the value on $E_{1}(X,Y,Z)$, which is always $1$.

We remark that the reason we act on $X_{n,k}$ with a free group instead of acting directly with $T_{n,k}$ is that the group $T_{n,k}$ has more relations than the corresponding actions in $\widetilde H$ preventing us from translating actions of $T_{n,k}$ to actions of $\widetilde H$ directly.

We also need a version of Lemmas \ref{lem:char-comp}-- \ref{lem:ind-sym-G} for this problem.
\begin{lem}
\label{lem:char-comp-T}Assume that $S_{1},S_{2}\subset[n]$ satisfy the following conditions:
\begin{itemize}
    \item 

$S_{1}\cap[k]=S_{0}$ and $S_{1}\cap\{k+1,...,2k\}=S_{0}+k$ for
some $S_{0}\subset[k]$.

\item $n+1-S_{2}\backslash[n-2k]=S_{1}\cap[2k]$.

\item $S_{1}\backslash[2k]-2k=S_{2}\cap[n-2k]:=\overline{S}$.
\end{itemize}
Define a homomorphism $\lambda:F_{2|S_{0}|+|\overline{S}|,|S_{0}|}\times F_{n-2|S_{0}|-|\overline{S}|,k-|S_{0}|}\ra F_{n,k}$
by defining 
\begin{align*}
(E_{i},I_{n-2|S_{0}|-|\overline{S}|}) & \mt E_{i}\oplus_{S_{1},S_{2}}I_{n-2|S_{0}|-|\overline{S}|}\\
(I_{2|S_{0}|+|\overline{S}|},E_{i}) & \mt I_{2|S_{0}|+|\overline{S}|}\oplus_{S_{1},S_{2}}E_{i}
\end{align*}
and extending in the usual way. Then $f_{A\oplus_{S_{1},S_{2}}B}(\lambda(w_{1},w_{2}))=f_{A}(w_{1})f_{B}(w_{2})$
for all $w_{1}\in F_{2|S_{0}|+|\overline{S}|,|S_{0}|}$, $w_{2}\in F_{n-2|S_{0}|-|\overline{S}|,k-|S_{0}|}$ and matrices $A, B$ of appropriate dimensions. The same holds for $f^u$.
\end{lem}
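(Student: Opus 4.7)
The plan is to reduce to the case that $w_1$ and $w_2$ are each single generators, and then to verify the identity generator by generator using the explicit formulas for $\nu$ and $f$.

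The first step is to exploit the cocycle-type identity $f_x(w_1 w_2) = f_{w_2.x}(w_1)\, f_x(w_2)$ noted in the text. Writing each $w_i$ as a product of generators and expanding both sides, the claim reduces to the two assertions
\[
f_{A \oplus_{S_1, S_2} B}(\lambda(E_i, I)) = f_A(E_i), \qquad f_{A \oplus_{S_1, S_2} B}(\lambda(I, E_i)) = f_B(E_i),
\]
together with the compatibility that the action of $\lambda(E_i, I)$ on $A \oplus_{S_1, S_2} B$ preserves the welded form and only modifies the $A$-summand (and symmetrically for $\lambda(I, E_i)$). This compatibility is exactly where the hypotheses on $S_1, S_2$ enter: they ensure that the block partition $\{k,k,n-2k\}$ of $[n]$ is compatible with welding, so that $(E_i \oplus_{S_1,S_2} I).(A \oplus_{S_1,S_2} B) = (E_i.A) \oplus_{S_1,S_2} B$ by the remark following the welding definition together with Lemma \ref{lem:Eprops}.

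Given the reduction, the generator-by-generator verification is the substance. For $E_1(X,Y,Z)$ the value $\psi_1(\det X / \det Z)$ depends only on data living in the $A$-block of the welding, so the identity is immediate. For $E_2(a,b)$ and $E_5(a,c)$ the value $\psi_0(\pm \tr a)$ depends only on generator data, so again the identity is formal. The content lies in $E_3(b)$ and $E_4(c)$, where the formulas $\psi_0(\tr(x_1 b))$ and $\psi_0(\tr(-c\, x_9))$ pick up dependence on $x$. Here the key point is that the $(1,1)$ and $(9,9)$ blocks of $A \oplus_{S_1,S_2} B$ are themselves welded matrices of the form $A_1 \oplus_{S_0, S_0} B_1$ and $A_9 \oplus_{\bar S^c, \bar S^c} B_9$, and the generators $\lambda(E_3(b), I)$ act by a welded $b \oplus 0$. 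Applying the multiplication rule of Lemma \ref{lem:Eprops} and the identity $\tr(E_{S_0, S_0}(M)) = \tr(M)$, the trace splits cleanly as $\tr(A_1 b)$ (with a vanishing contribution from the $B$-part), which is exactly $f_A(E_3(b))$.

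The main obstacle is purely combinatorial bookkeeping: tracking how the index sets $S_1, S_2$ intersect the three-block partition, and checking that the generator embeddings $E_i \oplus_{S_1, S_2} I$ and $I \oplus_{S_1, S_2} E_i$ really lie in $T_{n,k}$ and define commuting sub-actions on the welded matrix; once those compatibilities are in place the computations reduce to applications of Lemma \ref{lem:Eprops}. The corresponding statement for $f^u$ then follows by the same argument, since $f^u$ agrees with $f$ on $E_2, E_3, E_4, E_5$ and is identically trivial on $E_1$, matching the vanishing of $\psi_u$ on the $H_0$-factor; in particular no separate verification is needed beyond noting that the $E_1$ case becomes $1 = 1 \cdot 1$.
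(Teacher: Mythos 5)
Your proposal is correct and takes the same approach the paper intends: the paper does not write out a proof, noting only that it is ``very similar to the proofs of Lemmas \ref{lem:char-comp}--\ref{lem:ind-sym-G}'', which are exactly the generator-level block computations you carry out after reducing via the cocycle identity $f_x(w_1w_2)=f_{w_2.x}(w_1)f_x(w_2)$. One minor notational slip: since $X_{n,k}$ uses the row partition $\{k,k,n-2k\}$ and the column partition $\{n-2k,k,k\}$, the $(1,1)$ block of the welded matrix is $A_1\oplus_{S_0,\overline{S}}B_1$ (not $A_1\oplus_{S_0,S_0}B_1$), but this does not affect the substance of the argument.
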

\begin{lem}
\label{lem:inv-comp-T} Assume that $p\in \GL_{n-2k}$
is a permutation matrix which preserves the order of $\overline{S}$
and $\overline{S}^{C}:=[n-2k]\backslash \overline{S}$ (explicitly, if $i<j\in\overline{S}$ or $i<j\in\overline{S}^{C}$
then $p(i)<p(j)$) and denote $p_{1}:=I_{2k}\oplus_{[2k],[2k]}p$ and $  p_{2}:=p\oplus_{[n-2k],[n-2k]}I_{2k}$. Assume also that $n+1-S_{2}=p_{1}S_{1}$
and $n+1-S_{1}=p_{2}S_{2}$. Then $\s(A\oplus_{S_{1},S_{2}}B)=E_{1}(I_{k},p,I_{k})(\s(A)\oplus_{S_{1},S_{2}}\s(B))$.
\end{lem}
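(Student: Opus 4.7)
The plan is to verify the identity by a direct entrywise calculation from the definitions, reducing it via linearity to two parallel matching statements and then tracking supports and orderings carefully. Both $\sigma(\cdot)=w_n(\cdot)^t w_n$ and the action $(p_1,p_2).x = p_1\,x\,p_2^{-1}$ are linear in their matrix argument, so applying each side of the claimed identity to the expansion $A\oplus_{S_1,S_2}B = E_{S_1,S_2}(A) + E_{S_1^C,S_2^C}(B)$ splits the problem into an ``$A$-part''
\[
\sigma(E_{S_1,S_2}(A)) \;=\; E_1(I_k,p,I_k).(E_{S_1,S_2}(\sigma(A)))
\]
and an analogous ``$B$-part'' with $S_1^C,S_2^C$ in place of $S_1,S_2$.

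For the left-hand side of the $A$-part I expand using the transposition rule $E_{S_1,S_2}(A)^t=E_{S_2,S_1}(A^t)$ from Lemma \ref{lem:Eprops} and the fact that conjugation by $w_n$ reverses indices, namely $(w_n M w_n)_{ij}=M_{n+1-i,n+1-j}$, to obtain
\[
\sigma(E_{S_1,S_2}(A)) \;=\; w_n E_{S_2,S_1}(A^t) w_n \;=\; E_{n+1-S_2,\,n+1-S_1}(\sigma(A)),
\]
where $\sigma(A)=w_{|S_1|}A^t w_{|S_1|}$ (and $|S_1|=|S_2|=2|S_0|+|\overline{S}|$). For the right-hand side, since $p_1$ and $p_2$ are permutation matrices, the action $p_1\,E_{S_1,S_2}(\sigma(A))\,p_2^{-1}$ is a simultaneous row-column permutation; a direct entrywise computation shows it equals $E_{p_1 S_1,\,p_2 S_2}(\sigma(A))$. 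The order-preservation hypothesis on $p$ is exactly what is needed at this step: because $p_1$ is the identity on $S_1\cap[2k]$ and acts through the order-preserving $p$ on the shifted block $\overline{S}+2k\subseteq S_1$, the restriction $p_1|_{S_1}$ is order-preserving, so the $\alpha$-th-smallest element of $S_1$ maps to the $\alpha$-th-smallest element of $p_1 S_1$; this ensures that the $(\alpha,\beta)$-entry of $\sigma(A)$ is labelled correctly on both sides. The corresponding statement holds for $p_2$ restricted to $S_2$.

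Combining the two computations, the hypotheses $n+1-S_2=p_1 S_1$ and $n+1-S_1=p_2 S_2$ identify the two sides of the $A$-part. For the $B$-part the argument is identical: because $p_1,p_2$ are bijections on $[n]$, taking complements of the hypothesized set equalities gives $n+1-S_2^C=p_1 S_1^C$ and $n+1-S_1^C=p_2 S_2^C$ for free, while the order-preservation of $p$ on $\overline{S}^C$ plays the analogous role for $p_1|_{S_1^C}$ and $p_2|_{S_2^C}$. This completes the proof.

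I expect the main obstacle to be the bookkeeping in the middle step: one has to verify that the sorted representation of $p_1 S_1$ used by the operator $E_{p_1 S_1,p_2 S_2}$ matches, element by element, the image under $p_1$ of the sorted representation of $S_1$, and this matching is precisely where the order-preservation hypothesis on $\overline{S}$ (and on $\overline{S}^C$ for the $B$-part) is consumed. Once this is set up carefully, the algebraic identification of the two sides is immediate from the set-theoretic hypotheses.
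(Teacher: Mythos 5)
Your proof is correct and follows the same route the paper intends for this lemma (the paper omits the proof, noting it is "very similar" to that of Lemma~\ref{lem:inv-comp}, which proceeds by exactly this expansion of $\sigma(E_{S_1,S_2}(A))$ via $E_{S_1,S_2}(A)^t=E_{S_2,S_1}(A^t)$ and conjugation by the longest Weyl element). You also correctly identified the one point where the $T_{n,k}$-version genuinely differs from Lemma~\ref{lem:inv-comp} — the extra factor $E_1(I_k,p,I_k)=(p_1,p_2)$ is needed precisely to realign the sorted index sets, and the order-preservation hypothesis on $\overline{S}$ and $\overline{S}^C$ is what makes $p_i E_{S_1,S_2}(\cdot)p_j^{-1}$ behave as a clean relabelling $E_{S_1,S_2}\mapsto E_{p_1S_1,p_2S_2}$.
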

\begin{lem}
\label{lem:ind-sym-T}Assume that $S_{1}$ and $S_{2}$ satisfy the
conditions in Lemmas \ref{lem:char-comp-T} and \ref{lem:inv-comp-T} for some permutation $p\in \GL_{n-2k}$.
If $A\oplus_{S_{1},S_{2}}B$ is $*$-admissible then
$A$ and $B$ are $*$-admissible and if
$A$ and $B$ are both $*$-invariant
then $A\oplus_{S_{1},S_{2}}B$ is $*$-invariant.
\end{lem}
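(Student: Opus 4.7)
The plan is to mimic the proof of Lemma \ref{lem:ind-sym-G} almost verbatim, with the homomorphism $\lambda$ and the characters $f_x,f^u_x$ playing the role that the direct-sum embeddings played in the $G$-setting. The two ingredients that make this possible are Lemma \ref{lem:char-comp-T}, which tells us that the pullback under $\lambda$ factors $f^{(u)}_{A\oplus B}$ as $f^{(u)}_A\cdot f^{(u)}_B$, and Lemma \ref{lem:inv-comp-T}, which records the precise ``correction'' needed to pass from $\s(A)\oplus\s(B)$ to $\s(A\oplus B)$. The only real work is to check that the image of $\lambda$ acts on welded matrices exactly block-by-block; this is a direct consequence of the multiplication rule for $\oplus_{S_1,S_2}$ contained in Lemma \ref{lem:Eprops}, which shows that for the generators $E_i\oplus_{S_1,S_2}I$ and $I\oplus_{S_1,S_2}E_i$ one has $\lambda(w_1,w_2).(A\oplus_{S_1,S_2}B)=(w_1.A)\oplus_{S_1,S_2}(w_2.B)$.

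For the admissibility half, assume $x:=A\oplus_{S_1,S_2}B$ is $*$-admissible. The identity $x_2=x_6$, combined with the block structure imposed by the welding, immediately yields $A_2=A_6$ and $B_2=B_6$. Now let $w_1\in F_{2|S_0|+|\overline S|,|S_0|}$ satisfy $w_1.A=A$, and set $w:=\lambda(w_1,1)\in F_{n,k}$. By the block-wise action noted above, $w.x=(w_1.A)\oplus B=x$, so $*$-admissibility of $x$ gives $f^u_x(w)=1$; but Lemma \ref{lem:char-comp-T} evaluates this as $f^u_A(w_1)f^u_B(1)=f^u_A(w_1)$, whence $f^u_A(w_1)=1$. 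Thus $A$ is $*$-admissible, and the symmetric argument with $\lambda(1,w_2)$ handles $B$.

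For the invariance half, suppose $w_A.A=\s(A)$ and $w_B.B=\s(B)$ with $f_A(w_A)=f_B(w_B)=1$. Put $w:=\lambda(w_A,w_B)$, so that $w.x=\s(A)\oplus_{S_1,S_2}\s(B)$, and then by Lemma \ref{lem:inv-comp-T},
\[
(E_1(I_k,p,I_k)\cdot w).x \;=\; E_1(I_k,p,I_k).(\s(A)\oplus_{S_1,S_2}\s(B)) \;=\; \s(x).
\]
It remains to compute, using the cocycle relation $f_x(w_1w_2)=f_{w_2.x}(w_1)f_x(w_2)$ and Lemma \ref{lem:char-comp-T},
\[
f_x(E_1(I_k,p,I_k)\cdot w) \;=\; f_{w.x}(E_1(I_k,p,I_k))\cdot f_A(w_A)f_B(w_B) \;=\; \psi_1(\det I_k/\det I_k)\cdot 1\cdot 1 \;=\; 1,
\]
which certifies $*$-invariance of $x$.

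The only point requiring any care is the verification that the embedded generators act as claimed on the welded matrix; everything else is formal bookkeeping. Since $\lambda$ is defined on free-group generators and extended multiplicatively, it suffices to check this for one generator at a time, and the argument in each of the cases $E_2,E_3,E_4,E_5$ reduces to the multiplication formulas of Lemma \ref{lem:Eprops} together with the compatibility of $\sigma$ with welding used already in Lemma \ref{lem:inv-comp-T}.
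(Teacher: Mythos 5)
Your proposal is correct and takes essentially the same approach the paper intends: the paper does not write out a proof of this lemma, remarking only that the proofs of Lemmas~\ref{lem:char-comp-T}--\ref{lem:ind-sym-T} are "very similar" to those of Lemmas~\ref{lem:char-comp}--\ref{lem:ind-sym-G}, and your argument is exactly the transcription of the proof of Lemma~\ref{lem:ind-sym-G} into the $T$-action setting, using $\lambda$, $f_x$, $f_x^u$ and the correction term $E_1(I_k,p,I_k)$ from Lemma~\ref{lem:inv-comp-T} in place of the direct-sum manipulations in $\widetilde H$.
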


The proofs of these lemmas are very similar to the proofs of Lemmas \ref{lem:char-comp}--\ref{lem:ind-sym-G}.
\subsection{Reduction of Proposition \ref{prop:Y2} to Lemma \ref{lem:Alternative-geometric-statement}}\label{sec:red}
We now explain how the alternative geometric statement implies the claim for the representatives we have not solved yet. Recall that in Lemma \ref{claim:reps} and the discussion following it we defined representatives $\gamma=\gamma_{Y,Z,k_1,k_2,t,s}$ and divided $Y$ and $Z$ to blocks $Y_i,Z_i$ with $1\leq i\leq 9$. Using this notation, we remind the reader that the cosets we haven't solved yet are those with $Y_2=Z_2=0$.

\begin{lemma}
\label{claim:Y2=0}Assume that $[\gamma]$ is admissible and
$Y_{2}=Z_{2}=0$. Then $s=t+k_1-k_2=0$. 
\end{lemma}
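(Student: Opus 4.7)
The plan is to combine the structural consequences of admissibility (Corollary \ref{cor:van}) with further admissibility tests in the spirit of Lemma \ref{lem:col_t}, and with the invertibility of $\gamma$, to force $s = 0$ and $t + k_1 - k_2 = 0$.

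First, I would extract the immediate consequences of the hypothesis $Y_2 = Z_2 = 0$. By Corollary \ref{cor:van}, these force $C_3 = C_8 = 0$, and the unconditional $Y_8 = 0$ further gives $C_9 = 0$; combined with $Y_9 = Z_3 = Z_9 = 0$, which hold for every admissible $\gamma$, the matrix $\gamma$ becomes highly sparse. In particular, the third row block of $\gamma$ (of size $s$) has nonzero entries only in the $C_7$ sub-block, and the sixth column block of $\gamma$ (of size $t+k_1-k_2$) has nonzero entries only in the $C_6$ sub-block; all other entries in these two strips are forced to vanish by the previous observations.

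Next, I would run additional Lemma \ref{lem:col_t}-style admissibility arguments to show that these residual blocks must themselves vanish. The template is the same: for each entry of $C_7$ (resp.\ $C_6$), I would find elementary matrices $A \in H'$ and $B \in H$ with $A\gamma B = \gamma$ whose associated character value $\psi_u(\tau(A)B^{-1})$ is $\psi_0$ applied to a linear functional in that entry. The sparsity obtained in the previous step ensures that producing such $(A,B)$ reduces to a handful of routine index manipulations: the many zero blocks let one solve $h_1 \gamma = \gamma h_2$ with $h_1, h_2$ supported on just a few off-diagonal entries. Since $\psi_0$ is a nontrivial additive character, admissibility forces the chosen entry to vanish, and applied systematically this yields $C_6 = C_7 = 0$.

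Finally, I would combine these vanishings with the invertibility of $\gamma$. If $s > 0$ then row block $3$ of $\gamma$ contributes $s$ rows supported entirely in $C_7$, and $C_7 = 0$ would make these rows zero, contradicting $\det \gamma \neq 0$. Symmetrically, if $t+k_1-k_2 > 0$ then column block $6$ carries $t+k_1-k_2$ columns supported entirely in $C_6$, and $C_6 = 0$ again contradicts invertibility. Hence $s = 0$ and $t+k_1-k_2 = 0$, as claimed. The principal obstacle will be pinning down elementary matrices that simultaneously lie in $H'$ (rather than merely in $P_{n,n,m-n}$) and stabilize $\gamma$; the sparsity established in the first step reduces this to a careful bookkeeping exercise rather than a conceptual difficulty.
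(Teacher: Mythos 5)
Your first paragraph is correct and matches the paper's starting point: Corollary~\ref{cor:van} forces $Y_8=Y_9=Z_3=Z_9=0$ and, given $Y_2=Z_2=0$, also $C_3=C_8=C_9=0$, so that row block $(1,3)$ reduces to $C_7$ alone and column block $(2,3)$ to $C_6$ alone. The gap is in the next step. You claim that ``Lemma~\ref{lem:col_t}-style'' admissibility tests, applied to the \emph{original} representative $\gamma$, will show $C_6=C_7=0$. But the tests available cannot reach these blocks. Lemma~\ref{lem:col_t}(i) forces vanishing only in columns $2n+1-t-k_1+k_2,\dots,2n$ (the position of $C_3,C_6,C_9$) and requires the row to already vanish in its first $n$ entries; the rows carrying $C_6$ are rows $n-k_2-s+1,\dots,n-s$, whose first $n$ entries contain $Y_5$, which is \emph{necessarily} nonzero (its columns must be linearly independent for $\gamma$ to be invertible, since column block $(1,2)$ contains only $Y_5$). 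Lemma~\ref{lem:col_t}(ii) forces vanishing only in rows $n-s+1,\dots,n$ and requires the column to already vanish in rows $n+1,\dots,2n$; the columns carrying $C_7$ are columns $n+1,\dots,n+k_2$, whose entries in rows $n+1,\dots,2n$ contain $Z_1$, again necessarily of full rank. So neither part of Lemma~\ref{lem:col_t} fires, and the ``handful of routine index manipulations'' you invoke are exactly where the difficulty lies: the character $\psi_u$ is supported only on the diagonal of the $u$-block (entries $(i,n+i)$), and for the rows $\{n-s+1,\dots,n\}$ of $C_7$ those diagonal partners are columns $2n-s+1,\dots,2n$, not $n+1,\dots,n+k_2$.

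The paper's proof overcomes precisely this obstruction by first \emph{changing the representative}: it uses the linear-independence observations above to deduce $k_2\geq n-t-k_1$ and $k_2\geq n-k_2-s$, chooses $A,B\in\GL_{k_2}$ with $AY_5=\left[\begin{smallmatrix}0\\ I_{n-t-k_1}\end{smallmatrix}\right]$ and $Z_1B=\left[\begin{smallmatrix}I_{n-k_2-s}&0\end{smallmatrix}\right]$, and acts on $\gamma$ by $\Delta(\cdot)\in H\cap H'$ to replace $Y_5,Z_1$ by these normal forms. After re-blocking, a new group of rows now \emph{does} have zero first $n$ entries, so Corollary~\ref{cor:van} and Lemma~\ref{lem:col_t} applied to the new representative (together with one more $\widetilde H$-action to clean up) produce an entire zero row block of height $s$ and an entire zero column block of width $t+k_1-k_2$, whence invertibility forces both to be $0$. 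Without this reduction step your argument does not close; indeed, for $s>0$ and admissible $\gamma$ the block $C_7$ of the \emph{original} representative has rank $s>0$ (forced by invertibility), so a direct proof that ``$C_7=0$'' for the original representative is not available -- one must first pass to the reduced representative, as the paper does.
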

\begin{proof}
By Corollary \ref{cor:van}, $Y_{8}=0$. Since $\gamma$ is invertible,
the columns of $Y_{5}$ must be linearly independent. We deduce that
$k_2\geq n-t-k_1$. Choose a matrix $A\in \GL_{k_2}$ s.t.
$AY_{5}=\left[\begin{array}{c}
0\\
I_{n-t-k_1}
\end{array}\right]$. Multiplying $\gamma$ by $\Delta(I_{n-k_2-s},A,I_{s})$ on the left we get that:
\[
\gamma\sim\left[\begin{array}{ccc|ccc|ccc}
 & 0 &  & * & * & * &  &  & *\\
 & I_{n-t-k_1} &  & * & * & * &  &  & *\\
 & 0_{s,n-t-k_1} &  & * & * & * &  &  & *\\
\hline  &  &  & Z_{1} & 0 & Z_{3}\\
Aw_{k_2} &  &  &  &  & \\
 &  &  &  &  &  &  & I_{s}\\
\hline  &  &  &  &  &  & I_{m-n-s-t}\\
 &  &  & Z_{7} & Z_{8} & Z_{9}\\
 &  & I_{t+k_1-k_2} &  &  & 
\end{array}\right]
\]
By Corollary \ref{cor:van}, $Z_3=0$. Similarly we deduce $k_2\geq n-k_2-s$ from the linear independence of the rows of $Z_1$ and choose $B\in \GL_{k_2}$ such that $Z_{1}B=\left[\begin{array}{cc}
I_{n-k_2-s} & 0\end{array}\right]$.
Applying Lemma \ref{lem:col_t} and acting with an element of $\widetilde{H}$, we get
\[
\gamma\sim\left[\begin{array}{ccc|ccc|ccc}
 & 0 &  & 0 & * & 0 &  &  & *\\
 & I_{n-t-k_1} &  & 0 & 0 & 0 &  &  & 0\\
 & 0 &  & 0 & 0 & 0_{s,t+k_1-k_2} &  &  & 0\\
\hline  &  &  & I_{n-k_2-s} & 0 & 0\\
Aw_{k_2}B &  &  &  &  & \\
 &  &  &  &  &  &  & I_{s}\\
\hline  &  &  &  &  &  & I_{m-n-s-t}\\
 &  &  & 0 & * & 0\\
 &  & I_{t+k_1-k_2} &  &  & 
\end{array}\right].
\]
Since we are in $\GL_{n+m}$, we must have $t+k_1-k_2=s=0$. 
\end{proof}
Denote $k:=k_2=t+k_1$ and $e:=2k-n\geq 0$. 
Note that in the last lemma we have got that under the conditions of Proposition \ref{prop:Y2} we can assume that $\gamma$ has the form
\[
\gamma = \left[\begin{array}{cc|cc|cc}
 &  &  & A & & A'\\
 & I_{n-k} &  & \\
\hline  &  & I_{n-k} & \\
B &  &  & \\
\hline  &  &  &  & I_{m-n-t}\\
 &  &  & B' & & 
\end{array}\right]
\]
with some $B\in \GL_k$, $A\in \Mat_k$ and $A',B'^t\in \Mat_{k,t}$.

By Lemma \ref{lem:redclas}, it is enough to prove the claim for cosets in $\Mat_{2n}$ which contain a representative of the form
\[
\eta_{A,B}=\left[\begin{array}{cc|cc}
 &  &  & A\\
 & I_{n-k}\\
\hline  &  & I_{n-k}\\
B & 
\end{array}\right]\in \Mat_{2n}
\]
with $B\in \GL_k$. 

An easy computation shows that if
\[
B'=\left[\begin{array}{cc}
X_1 & X_2\\
 & X_3
\end{array}\right]B\left[\begin{array}{cc}
X'_1 & X'_2\\
 & X'_3
\end{array}\right].
\]
with $X_1,X_3'\in \GL_{n-k}$ and $X_1,X_3'\in \GL_{e}$
then there exist $A'\in \Mat_{k}$ such that $\eta_{A,B}\sim\eta_{A',B'}$. Therefore we can assume \begin{equation*}\label{eq:B}
B=\left[\begin{array}{cc|cc}
0 & I_{\l} & 0 & 0\\
0 & 0 & I_{e-\l} & 0\\
\hline 0 & 0 & 0 & I_{\l}\\
I_{r} & 0 & 0 & 0
\end{array}\right]
\end{equation*}
for some $r\leq n-k$ and $\l:=n-k-r\leq e$.

Write:
\[
A=\left[\begin{array}{cc}
 A_{1} & A_{2}\\
 A_{3} & A_{4}\\
 \end{array}\right],\,
 B^{-1}=\left[\begin{array}{cc}
 B'_{1} & B'_{2}\\
 B'_{3} & B'_{4}\\
 \end{array}\right]
\]
with $A_{2},B'_{2}\in \Mat_{n-k}$. Then
\[
\Delta(\left[\begin{array}{ccc}
I_{n-k} &  & \\
 & I_{e} & \\
X &  & I_{n-k}
\end{array}\right]
)\cdot\eta_{A,B}\cdot 
d(I_n,
\left[\begin{array}{ccc}
-B'_2X &  & \\
-B'_4X &  & \\
 & -XA_{1} & -XA_{2}
\end{array}\right]
)=\eta_{A,B}. 
\]
Therefore if $[\eta_{A,B}]$ is admissible, $\tr(-B'_2X-XA_{2})=0$ for all $X\in \Mat_{n-k}$. We deduce that $A_2=-B'_2=\left[\begin{array}{cc}
0 & -I_{r}\\
0 & 0 
\end{array}\right]$. We get that $\eta_{A,B}$ is equivalent to a matrix of the form:
\[
\left[\begin{array}{ccc|ccc}
 &  &  &  &  & I_{r}\\
 &  &  &  & *\\
 &  & I_{n-k}\\
\hline  &  &  & I_{n-k}\\
 & * & \\
-I_{r} &  & 
\end{array}\right]
\]
on which we use Lemma \ref{lem:ind-sym-G} with $S_1=S_2=\{[r],n-r+[2r],2n+1-[r]\}$ to get that we can assume $r=0$. We have reduced the problem to representatives of the form:
\[
\eta_{A}:=\left[\begin{array}{cc|cc}
 &  &  & A\\
 & I_{\l}\\
\hline  &  & I_{\l}\\
I_k & 
\end{array}\right]
\]
with $A\in X^{k,\l}$ (note that $\l=n-k\leq e$). Write $$A=\left[\begin{array}{ccc}
A_{1} & A_{2} & 0_{\l}\\
 A_{4} & A_{5} & A_{6}\\
 A_{7} & A_{8} & A_{9}
\end{array}\right].$$
with $A_5\in \Mat_\l$.
Let $X\in \Mat_{\l}$. We denote:
$$M_1=d(\left[\begin{array}{cccc}
I_{\l}\\
 & I_{\l}\\
 &  & I_{k-2\l}\\
 & -X &  & I_{\l}
\end{array}\right],\left[\begin{array}{cc}
-A_{2}X\\
-A_{5}X\\
-A_{8}X\\
& 0_{\l,k}\\
\end{array}\right])$$
$$M_2=d(\left[\begin{array}{cccc}
I_{\l}\\
 & I_{k-2\l}\\
X &  & I_{\l}\\
 &  &  & I_{\l}
\end{array}\right],\left[\begin{array}{cccc}
\\
\\
\\
XA_{5}X & XA_{4} & XA_{5} & XA_{6}
\end{array}\right])$$
and observe that 
$
M_1\eta_{A}M_2=\eta_{A}
$
for any $X\in \Mat_{\l}$. As before, we deduce $A_{2}=A_{6}$.

Assume that $[\eta_{A}]$ is admissible. We want to show that $\eta_A$ is $\psi$-$\tau$-invariant. Assume
that $w.A=A$ for some $w\in F_{n,k}$ with $f^u_A(w)=1$. Then, using Lemma \ref{lem:nueta}, $\eta_{w.A}=\nu_{A}(w)\eta_{A}=\eta_{A}$
and by the admissibility of $[\eta_{A}]$ we deduce that $\psi_u(\nu_{A}(w))=f_{A}^u(w)=1$. Together with $A_2=A_6$ we deduce that $A$ is $*$-admissible. 
Using Lemma \ref{lem:Alternative-geometric-statement} which we prove in the next Subsection we get that
there exist $w'\in F_{n,k}$ such that $\sigma(A)=w'.A$ and $f_{A}(w')=1$.
Therefore $\tau(\eta_{A})=\eta_{\s(A)}=\eta_{w'.A}=\nu_{A}(w')\eta_{A}$
with $\psi(\nu_{A}(w'))=f_{A}(w')=1$.
This concludes the case of $Y_2=0$. 
\subsection{Proof of Lemma \ref{lem:Alternative-geometric-statement}}\label{sec:simprob} 
We prove the alternative geometric statement by induction on $n$. If $k=0$ then $A$ degenerates
to the block $A_{7}$. We choose $Y\in \GL_{n}$ such that $YAY^{-1}=w_{n}A^{t}w_{n}$ as we did in the proof of Proposition  \ref{prop:adm-inv}.
Since $(Y,Y)\in T_{n,0}$ has a trivial associated character, $A$
is $*$-invariant.

Assume that $k>0$ and $A=\left[\begin{array}{c|c|c}
A_{1} & A_{2} & 0\\
\hline A_{4} & A_{5} & A_{6}\\
\hline A_{7} & A_{8} & A_{9}
\end{array}\right]$ is $*$-admissible. Then $A_{2}=A_{6}$. Find $X,Z$ such that $XA_{2}Z^{-1}=\left[\begin{array}{cc}
0 & I_{d}\\
0 & 0
\end{array}\right]$ for some $d\in\bb N$. We act on $A$ with $E_{1}(X,I_{n-2k},Z)$
and then with suitable $E_{2},E_{5}$ to get an element of the form:
\[
A':=\left[\begin{array}{c|cc|cc}
0 & 0 & I_{d} & 0 & 0\\
A'_{1} & 0 & 0 & 0 & 0\\
\hline 0 & 0 & 0 & 0 & I_{d}\\
A'_{4} & A'_{5} & 0 & 0 & 0\\
\hline A'_{7} & A'_{8} & 0 & A'_{9} & 0
\end{array}\right]
\]

We now apply Lemma \ref{lem:ind-sym-T} on $S_{1}=[d]\cup k+[d]$, $S_{2}=n+1-S_{1}$ and trivial permutation $p$ to deduce that we can assume $d=0$. For any matrix $X\in \Mat_{k}$
such that $X'_1A=0$ we have $E_{2}(X,0)A=A$, hence $\tr(X)=0$. Consequently the rows
of $A'_{1}$ are linearly independent. In particular $k\leq n-2k$.
Similarly, the columns of $A'_{9}$ are linearly independent as well. Acting
with an appropriate element of $T$ we get:
\[
A'\sim\left[\begin{array}{cc|c|c}
I_{k} &  & \\
\hline  & B_{4} & B_{5}\\
\hline  & B_{7}^{1} & B_{8}^{1} & B_{9}^{1}\\
 & B_{7}^{2} & B_{8}^{2} & B_{9}^{2}
\end{array}\right]:=B
\]

with $B_{9}^{1}\in \Mat_{k}$. For any $X\in \Mat_{k}$, acting with
$E_{2}(B_{5}X,0)E_{4}(\left[\begin{array}{cc}
X & 0\end{array}\right])$ stabilises $B$ and has a character $\psi_{0}(\tr(-B_{5}X-X B_{9}^{1}))$.
Therefore $B_{5}=-B_{9}^{1}$. Choose $U,V\in \GL_{k}$ such that $UB_{5}V=\left[\begin{array}{cc}
0 & -I_{d}\\
0 & 0
\end{array}\right]$ for some $d$. Then 
\begin{align*}
B\sim & E_{1}(U,\diag(U,I_{n-3k}),V)B=\left[\begin{array}{cc|c|c}
I_{k} & 0 & 0 & 0\\
\hline 0 & UB_{4} & UB_{5}V & 0\\
\hline 0 & B_{7}^{1} & B_{8}^{1} & -UB_{5}V\\
0 & B_{7}^{2} & B_{8}^{2} & B_{9}^{2}V
\end{array}\right]\\
\sim & \left[\begin{array}{cccc|cc|cc}
I_{d} & 0 & 0 & 0 & 0 & 0 & 0 & 0\\
0 & I_{k-d} & 0 & 0 & 0 & 0 & 0 & 0\\
\hline 0 & 0 & 0 & 0 & 0 & -I_{d} & 0 & 0\\
0 & 0 & C_{1} & C_{2} & 0 & 0 & 0 & 0\\
\hline 0 & 0 & 0 & 0 & 0 & 0 & 0 & I_{d}\\
0 & 0 & C_{4} & C_{5} & C_{6} & 0 & 0 & 0\\
0 & 0 & C_{7} & C_{8} & C_{9} & 0 & 0 & 0\\
0 & 0 & 0 & 0 & 0 & 0 & I_{k-d} & 0
\end{array}\right]:=B'
\end{align*}

with $C_{2},C_{6}\in \Mat_{k-d}$. We apply Lemma \ref{lem:ind-sym-T} on $B'$ with:
\begin{align*}
S_{1}=&[d]\cup k+[d]\cup2k+[d],\,
S_{2}=[d]\cup n-k+1-[d]\cup n+1-[d]
,\,\\p=&\left[\begin{array}{cc}
 & I_{n-2k-d}\\
I_{d}
\end{array}\right]\end{align*} to get that it is enough to solve the problem for $d=0$. Therefore, we have
\[
B'=\left[\begin{array}{ccc|c|c}
I_{k} & 0 & 0 & 0 & 0\\
\hline 0 & C_{1} & C_{2} & 0 & 0\\
\hline 0 & C_{4} & C_{5} & C_{6} & 0\\
0 & C_{7} & C_{8} & C_{9} & 0\\
0 & 0 & 0 & 0 & I_{k}
\end{array}\right].
\]
For any $X\in \Mat_{k}$ we write $\beta_{C,X}=\left[\begin{array}{ccc}
-C_{5}X & C_{4} & C_{5}\end{array}\right]$. We note that: 
\[
t:=E_{2}(C_{2}X,\left[\begin{array}{c}
C_{5}\\
C_{8}\\
0
\end{array}\right]X)E_{5}(X C_{6},X\beta_{C,X})E_{1}(I_{k},\left[\begin{array}{ccc}
I\\
 & I\\
X &  & I
\end{array}\right],I_{k})
\]
stabilises $B'$ and has the character $f_{B'}^u(t)=\psi_{0}(\tr(X C_{6}-C_{2}X))$.
Therefore $C_{2}=C_{6}$. 

For $C\in X^{n-2k,k}$ denote $\rho_{C}=\left[\begin{array}{ccc}
I_{k}\\
 & C\\
 &  & I_{k}
\end{array}\right]\in X^{n,k}$. We define a map $\phi_{C}:F_{n-2k,k}\ra F_{n,k}$ by defining on
generators:
\begin{align*}
\phi_{C}(E_{1}(X,Y,Z)) & =E_{1}(X,\diag(X,Y,Z),Z)\\
\phi_{C}(E_{2}(a,b)) & =E_{3}(\left[\begin{array}{c}
a\\
b\\
0
\end{array}\right])\\
\phi_{C}(E_{3}(b)) & =E_{2}(C_{1}b,\left[\begin{array}{c}
C_{4}b\\
C_{7}b+bC_{4}b\\
0
\end{array}\right])E_{1}(I,\left[\begin{array}{ccc}
I\\
b & I\\
 &  & I
\end{array}\right],I)
\end{align*}

and similarly for $E_{4},E_{5}$, as we did in the definition of $\nu$
above. We extend to $F_{n-2k,k}$ as before by $\phi_{C}(w_{1}w_{2})=\phi_{w_{2}C}(w_{1})\phi_{C}(w_{2})$.
A simple calculation shows that $\rho_{w.C}=\phi_{C}(w)\rho_{C}$, $f_{\r_{C}}(\phi_{C}(w))=f_{C}(w)$ and $f_{\r_{C}}^u(\phi_{C}(w))=f_{C}^u(w)$ for any word $w\in F_{n-2k,k}$.

We know that $\rho_{C}$ is $*$-admissible and that $C_{2}=C_{6}$.
If $w.C=C$ for some $w\in F_{n-2k,k}$ then $\r_{C}=\phi_{C}(w)\r_{C}$
and $f_{C}^u(w)=f_{\r_{C}}^u(\phi_{C}(w))=1$. Therefore $C$ is $*$-admissible.
By the induction hypothesis there exists $w'\in F_{n-2k,k}$ such that $w'.C=\s(w')$ and $f_{C}(w')=1$. Then 
$$\s(\r_{C})=\r_{\s(C)}=\r_{w.C}=\phi_{C}(w')\s(\r_{C})$$ and $f_{\r_{C}}(\phi_{C}(w'))=1$,
showing that $\r_C$ is $*$-invariant. \qed

\appendix

\section{A gap in the proof of Lemma 3.9 in \texorpdfstring{\cite{Nien}}{[15]}}\label{app:nien}

In this appendix we use the notation in \cite{Nien}. We will
only state the gap and refer the reader to the relevant definitions
in Lemma 3.9 there. In the proof of Lemma 3.9, it is claimed that
\begin{align}
s_{2} \in S_{n}\label{eq:s2}
\end{align}
 if and only if 
\begin{align}\label{eq:weak}
\quad Q\left(\begin{array}{l}
Y_{1}\\
Y_{2}
\end{array}\right)=\left(\begin{array}{c}
R_{4}g_{3}C_{1}\\
p
\end{array}\right),\quad\left(Y_{3},Y_{4}\right)R^{-1}=\left(p,Q_{4}g_{1}D_{1}\right).
\end{align}

However, this is not necessarily true. A computation shows that Condition (\ref{eq:s2}) also implies
\begin{equation}
B_{4}r_{2}C_{2}=0.\label{eq:add-cond}
\end{equation}
As a concrete example of this issue, choosing:
\[
n=6,k=4,n'=1,R=\left(\begin{array}{cc|cc}
 &  &  & 1\\
 & 1\\
\hline 1 & \\
 &  & 1
\end{array}\right),Q=\left(\begin{array}{cc|cc}
 &  &  & 1\\
 & 1\\
\hline  &  & 1\\
-1 & 
\end{array}\right)
\]
and writing $g_{3}=\left(\begin{array}{cc}
r_{2}^{1} & r_{2}^{2}\\
0 & 0
\end{array}\right)$, we get that (\ref{eq:add-cond}) implies $r_{2}^{1}=0$. In contrast, (\ref{eq:weak}) has a solution for any $r_2^1\in \Mat_{n''}$.

To demonstrate how it affects the rest of the proof, later it is shown that for any
$g_{3}$ satisfying (\ref{eq:weak}) holds  $$\left(\begin{array}{cc}
r_{2}^{1} & r_{2}^{2}\\
0 & 0
\end{array}\right)\left(\begin{array}{cc}
V_{1}T_{2}-1\, & 0\\
V_{2}T_{2} & 0
\end{array}\right)=0$$ and then it is deduced there that $V_{1}T_{2}=1$. This deduction is not possible using (\ref{eq:s2}) since (\ref{eq:s2}) implies (\ref{eq:add-cond}) and in particular $r_{2}^{1}=0$.

{{
\bigskip
  \footnotesize
Itay Naor,
\textsc{Faculty of Mathematics and Computer Science, The Weizmann Institute of Science, POB
26, Rehovot 76100, ISRAEL}\par\nopagebreak \textit{E-mail address}: \texttt{itaynn@gmail.com}
}}
\end{document}